	\newtheorem{theorem}{Theorem}[subsection]
	\newtheorem{lemma}[theorem]{Lemma}
	\newtheorem{proposition}[theorem]{Proposition}
	\newtheorem{corollary}[theorem]{Corollary}
	\newtheorem*{theorem*}{Theorem}
	\newtheorem*{lemma*}{Lemma}
	\newtheorem*{proposition*}{Proposition}
	\newtheorem*{corollary*}{Corollary}
		\theoremstyle{definition}
		\newtheorem{definition}[theorem]{Definition}
		\newtheorem{example}[theorem]{Example}
\newtheorem{remark}[theorem]{Remark}
\newcommand{\Gal}{Gal}
\newcommand{\GL}{GL} 
\newcommand{\SL}{SL}
\newcommand{\surjto}{\twoheadrightarrow}
\newcommand{\Spec}{Spec} 
\newcommand{\Tr}{Tr} 
\newcommand{\ext}{ext} 
\newcommand{\Hom}{Hom} 
\newcommand{\End}{End} 
\newcommand{\Aff}{\textit{\emph{\text{Aff}}}}
\newcommand{\B}{\mathbf{B}}
\renewcommand{\a}{\mathfrak{a}}
\newcommand{\p}{\mathfrak{p}}
\newcommand{\cA}{\mathcal{A}}
\newcommand{\cF}{\mathcal{F}}
\newcommand{\cG}{\mathcal{G}}
\newcommand{\cV}{\mathcal{V}}
\newcommand{\cB}{\mathcal{B}}
\newcommand{\hra}{\hookrightarrow}
\newcommand{\cP}{\mathcal{P}}
\newcommand{\cS}{\mathcal{S}}
\renewcommand{\P}{\mathbf{P}}
\newcommand{\cT}{\mathcal{T}}
\newcommand{\cU}{\mathcal{U}}
\newcommand{\red}{red}
\newcommand{\aff}{\textit{\emph{\text{aff}}}}
\newcommand{\Fr}{Fr}
\newcommand{\Lbf}{\mathbf{L}}
\newcommand{\Nbf}{\mathbf{N}}
\newcommand{\Mbf}{\mathbf{M}}
\newcommand{\Sbf}{\mathbf{S}}
\newcommand{\Res}{\text{Res}}
\newcommand{\Sh}{Sh}
\newcommand{\rec}{rec}
\newcommand*\iso{
  \xrightarrow{\raisebox{-0.25 em}{\smash{\ensuremath{\sim}}}}%
}
\tikzset{%
  highlight/.style={rectangle,rounded corners,fill=blue!15,draw,fill opacity=0.5,thick,inner sep=0pt}
}
\def\1{\mathds{1}}
\def\cC{\mathcal{C}}
\def\cO{\mathcal{O}}
\def\cH{\mathcal{H}}
\def\cX{\mathcal{X}}
\def\cZ{\mathcal{Z}}
\def\cM{\mathcal{M}}
\def\A{\mathds{A}}
\def\Z{\mathds{Z}}
\def\N{\mathds{N}}
\def\Q{\mathds{Q}}
\def\R{\mathds{R}}
\def\C{\mathds{C}}
\def\T{\mathbf{T}}
\def\U{\mathbf{U}}
\def\G{\mathbf{G}}
\def\Hom{\text{Hom}}
\def\Spec{\text{Spec}}
\def\Gal{\text{Gal}}
\newcommand{\authnote}[2][]{\noindent {\if!#1!  {\bf TODO} \else {\small \bf #1} \fi: #2}}
\newcounter{tasknumber}[subsection]
\newcommand{\task}[2][]{%
  \addtocounter{tasknumber}{1}%
  \begin{center}%
  \framebox[1.1\width]{\begin{minipage}{0.9\textwidth}%
  \textbf{Task \arabic{tasknumber}} \textit{\if!#1(unassigned)!\else (#1)\fi}: {#2}%
  \end{minipage}}%
  \end{center}%
}
\newcounter{assumptionnumber}
\newcommand{\assumption}[2][]{%
  \addtocounter{assumptionnumber}{1}%
  \begin{center}%
  \framebox[1.1\width]{\begin{minipage}{0.9\textwidth}%
  \textbf{Assumption \arabic{assumptionnumber}} \textit{\if!#1!\else (#1)\fi}: {#2}%
  \end{minipage}}%
  \end{center}%
}
\newenvironment{customtheorem}[1]
{\innercustomtheorem}
  {\endinnercustomtheorem}
\def\@@@nomenclature[#1]#2#3{%
\def\@tempa{#2}\def\@tempb{#3}%
\protected@write\@glossaryfile{}%
{\string\glossaryentry{#1\nom@verb\@tempa @[{\nom@verb\@tempa}]%

nompageref{\begingroup\nom@verb\@tempb\protect\nomeqref{\theequation}}}%
{\thepage}}%
 \endgroup
 \@esphack}
\renewcommand{\nomgroup}[1]{%
\ifthenelse{\equal{#1}{A}}{\item[\textbf{Chapter II \S 1 \& \S 2}]}{%
\ifthenelse{\equal{#1}{B}}{\item[\textbf{Chapter II \S 3}]}{%
\ifthenelse{\equal{#1}{C}}{\item[\textbf{Chapter III}]}{
\ifthenelse{\equal{#1}{D}}{\item[\textbf{Chapter IV}]}{
\ifthenelse{\equal{#1}{E}}{\item[\textbf{Chapter V}]}{
\ifthenelse{\equal{#1}{F}}{\item[\textbf{Chapter VI}]}{
\ifthenelse{\equal{#1}{G}}{\item[\textbf{Chapter VII}]}{
\ifthenelse{\equal{#1}{H}}{\item[\textbf{Chapter V}]}{
}}}}}}}}}
\begin{document}

\title{\textsc{The ring of $\mathbb{U}$-operators: Definitions and Integrality}}

\author{Reda Boumasmoud}
\email{reda.boumasmoud@imj-prg.fr \& reda.boumasmoud@gmail.com}
\address{Institut de Mathématiques de Jussieu-Paris Rive Gauche (IMJ-PRG)\\
Sorbonne Université and Université de Paris, CNRS, F-75006 Paris, France.}



\makeatletter 
\@namedef{subjclassname@2020}{%
\textup{2020} Mathematics Subject Classification}
\makeatother

\subjclass[2020]{11E95, 11G18, 20E08, 20E42, 20G25 and 20C08 (primary).}


\begin{abstract}
In this paper, we define and study the arithmetic of the ring of $\mathbb{U}$-operators for reductive $p$-adic groups. 
These operators generalize the notion of "successor" operators for trees with a marked end. 
We show that they are integral over the spherical Hecke algebra. This integrality intervenes crucially in the construction of Euler systems obtained from special cycles of general Shimura varieties and in the generalization of the famous Eichler--Shimura relation.
\end{abstract}


 \maketitle


\tableofcontents

\section{Introduction}
The Bruhat--Tits building encodes in a geometrico-combinatorial way the structure of connected reductive groups over a (nonarchimedean) local field $F$. 
The theory of these buildings is a unifying principle with a striking range of applications, for example 
in the smooth representation theory of the connected reductive groups 
which is at the heart of the local Langlands program whose objective is the understanding of the absolute Galois group of $F$, 
in the classification of algebraic groups, 
in Kac--Moody groups which used in theoretical physics, 
in combinatorial geometry with application in computer science, 
and in the study of rigidity phenomena in spaces with negative curvature.

The main purpose of this work is to highlight and investigate an unexplored aspect in this theory, with intriguing applications in arithmetic algebraic geometry. 
More precisely,  
we define and study a ring of $\mathbb{U}$-operators that generalize the notion of "successor" operators in introduced \cite[\S 3.2]{BBJ18} for the $\U(3)\times \U(2)$ case and denoted by $\mathcal{U}_V\mathcal{U}_W$. 
Our main goal is to establish their integrality over the \emph{spherical} Hecke algebra generalizing \cite[Lemma 3.3]{BBJ18}. 
This integrality intervene crucially in the construction of Euler systems obtained from special cycles of general Shimura varieties \cite{Unitarynormrelations2020,Vunitarynormrelations2020} and in the generalization of the famous Eichler--Shimura relations \cite{Seedrelations2020,ESboumasmoud}.

\subsection{Origin of the problem}
{The relation between Galois and Hecke actions on various objects ({{which we call a distribution relation}}) has been crucial in modern number theory. 
For instance, in the construction of Galois representations associated to automorphic forms and of Euler systems. 
The goal of this section is to show how some combinatorial operators emerge naturally in the study of distribution relations of CM points in Shimura curves and capture the interaction between the Galois action and Hecke action.}
Classically, the distribution relations of Heegner points are obtained using the description of the modular curve as the space of moduli of elliptic curves with a cyclic subgroup of order $N$, see for example \cite[Proposition 3.10]{darmon:ratpoints}. Which is a serious a limitation, as we do not always have such a moduli interpretation for general Shimura varieties we are interested in.
But, as we will see in the next section (for which we refer the reader to \cite{cornut-vatsal:durham}  for all the missing details), Deligne's adelic definition of Shimura varieties simplifies the proof of these relations. 
This alternative allows the vizualization of the two Galois / Hecke actions in a combinatorial framework as we will observe in Figure \ref{fig1}. 
As a consequence, the distribution relations above follow in an almost obvious way.

\subsubsection{Shimura curves}

Let $E/F$ be a CM extension.
We put $\A_{F,f}=\A_{\Q,f} \otimes_\Q F= \widehat{\cO}_F \otimes \Q$ for the ring of finite adeles and set $\Hom_{\Q}(F,\R)=\{\iota_1, \dots \iota_d \}$. 
Let $\Sigma$ be any finite set of finite places of $F$ such that $|\Sigma| + [F : \Q]$ is odd. 
Let $\mathbb{B}$ be the quaternion algebra over $F$ which ramifies precisely at $\Sigma \cup \{\iota_2, \dots \iota_d \}$.

Let $\G$ be the reductive group over $\Q$ whose set of points on a commutative $\Q$-algebra $R$ is given by $\G(R) = (\mathbb{B} \otimes R)^\times$. 
We have $\G_{\R}= \prod_{\iota \in \Hom_{\Q}(F,\R)}\G_{\iota}$ where $\G_{\iota}$ is the
algebraic group over $\R$ whose points on a commutative $\R$-algebra $R$
is given by $ \G_\iota(\R) =(\mathbb{B}\otimes_{F, \iota}\R)^\times$. 
Let $\cX$ be the $\G(\R)$-conjugacy class of the morphism ${h} \colon\mathds{S}= \Res_{\C/\R} \mathds{G}_{m,\C}\to \G_{\R}$ given on $\R$-points by
$$ z= x+iy\mapsto (
\begin{pmatrix}
x & y \\-y &x
\end{pmatrix}
,1,\cdots,1).$$
where we have identified $\G_1$ with $\GL_{2,\R}$ thanks to an isomorphism of $\R$-algebras $\mathbb{B}\otimes_{F,\tau_1} \R\simeq \mathbf{M}_2(\R)$. 
We have $\G(\R)$-equivariant holomorphic diffeomorphism $\cX \iso \C\setminus\R$ given by the map $ghg^{-1} \mapsto g \cdot i $ where $\G(\R)$ acts on $\C\setminus \R$ through its first component $\G_{\iota_1}(\R) \simeq \GL_2(\R)$ by 
$\begin{pmatrix} a & b \\c &d \end{pmatrix} \cdot z = \frac{a z +b}{cz+d}$. 

The pair $(\G,\cX)$ is a Shimura datum and to  any open compact subgroup $U\subset \G(\A_f)$ we have a (proper and smooth) Shimura curve $\Sh_U(\G,\cX)$ which is defined over the reflex field $E(\G,\cX)=\iota_1(F)$. 
If $F = \Q$ and $\Sigma$ is empty, we get the classical modular curves. 
The complex points of $\Sh_U(\G,\cX)$ are given by
$$\Sh_U(\G,\cX)(\C) = \G(\Q) \backslash (\cX \times \G(\R)/U).$$

\subsubsection{CM points}

Assume that all primes $ \p \in \Sigma$ do not split in $E$ (which guarantees that $E$ splits $\mathbb{B}$, which amounts to require that $E_v$ is a field
for every finite place in $\Sigma$). 
Accordingly, there exists a $F$-algebra homomorphism $E\hra \mathbb{B}$, which induces 
an embedding $\jmath_E \colon \T_E:=\Res_{E/\Q}\mathds{G}_{m,E} \to \G$. 
We have $\T_{\R}= \prod_{\iota \in \Hom_{\Q}(F,\R)}\T_{\iota}$, where $\T_{\iota}= \Res_{E\otimes_{F,\iota} \R/\R}\mathds{G}_{m,E\otimes_{F,\iota} \R}$ and the embedding $\iota_1$ induces an isomorphism $\mathds{S} \iso \T_{\iota_1}$ which we still denote by $\iota_1$. 
Consider the homomorphism $s^\tau \colon \mathds{S} \to \T_\R $ given on $\R$-points by $z \mapsto (z^\tau, 1, \cdots ,1)$ for $\tau \in \Gal(\C/\R)$. 
We may assume that $\jmath_{E,\R}\circ s^\tau $ and $h$ belongs to the same connected component of $\cX$.

Let $\cZ_{\G,U}(E)$ denote the set of CM points with complex multiplication by $E$, i.e. $z_{x,g}=(x \times g)\in \cZ_{\G,U}(E)$ for some $g\in \G(\A_f)$ and $x\in \cX$ a homomorphisms that factors through $\T_{E,\R}$, but each of the two connected components of $\cX$ contains exactly one such a morphism. 
Accordingly, the natural map $\G(\A_f)\to \cZ_{\G,U}(E)$, induces the bijection
$$\cZ_{\G,U}(E)\simeq \T_E(\Q) \backslash \G(\A_f)/ U.$$
Write $z_g$ for the special point that is the image of $g\in \G(F)$. 
The points in $\cZ_{\G,U}(E)$ are algebraic, defined over the maximal abelian extension $E^{ab}$ of $E$. 
We refer to \cite[\S 3 \& \S 3.2]{cornut-vatsal} for a more detailed discussion of CM points on Shimura curves. 
There is an obvious left action of $\T(\A_f)$ on $\cZ_{\G,U}(E)$ given by $ t \cdot z_{g}=z_{gt}$ for $t\in \T_E(\A_f) $ and $g \in \G(\A_f)$, 
which factors through the reciprocity map $\rec_E \colon  \T_E(\A_f) \twoheadrightarrow \Gal(E^{ab}/E)$. 
Therefore, 
$\Gal(E^{ab}/E)$ acts on $\Z[\cZ_{\G,U}(E)]$ as follows; for every $\sigma \in \Gal(E^{ab}/E)$, let $h_\sigma \in \T_E(\A_f)$ be any element satisfying $\rec_E(h_\sigma)=\sigma_{|_{E^{ab}}}$ we have
$$\sigma({z}_g)=h_\sigma \cdot {z}_g={z}_{h_\sigma g}, \forall g\in \G(\A_f).$$ 
For any $g\in \G(\A_f)$, the $\T_E(\A_f)$-stabilizer of the CM point $z_g$ is $\T_E(\Q)(\T_E(\A_f)\cap gUg^{-1})$ and $z_g$ is said to be defined over the extension $E[g]\subset E^{ab}$ fixed by $\rec_E(\T(\A_f) \cap gUg^{-1})$. 
Moreover, we have a Galois equivariant left action of the Hecke algebra $\cH_U(\Z) := \End_{\Z[\G(\A_f)]}\Z[\G(\A_f)/U]$. 

Let $\p$ be a prime of $F$ with residue field $k_F$ of characteristic $p$ and order $q=p^r$. 
Assume that $\mathbb{B}$ is split at $\p$, and consider a compact open subgroup $U=U^{\p} \times U_{\p}$ of $\G(\A_f)$, where $U_p$ is a maximal order ($\simeq \GL_2(\cO_{F_\p})$)
in $\mathbb{B}_\p \simeq \mathbb{M}_2(F_{\p})$ and $U^{\p}$ is a compact open subgroup of $\widehat{\mathbb{B}}^{\times, \p} =\{g=g^{\p} \times g_{\p} \in\widehat{\mathbb{B}}^{\times} \colon g_{\p} = 1\}$. 
Let $\mathfrak{P}$ be a prime of $E$ sitting above $\p$ and $\Fr_{\mathfrak{P}}\in \Gal(E^{ab}/E)$ be the corresponding geometric Frobenius. 
Put $\epsilon_\p= -1, 0$ or $1$ depending upon whether $\p$ remains inert, ramifies or splits in $E$.

\subsubsection{(Local) Galois and Hecke actions}
We are interested in relating the action of the “decomposition group at
$\p$”, i.e. $D_\p= \Gal(E_{\p}^{ab}/E_\p)$ to the action of the local Hecke algebra $\cH_{U_\p}(\Z)=\End_{\Z[\G(F_\p)]}\Z[\G(F_\p)/U_\p]$. 
For this, consider the decomposition 
$$\Gal(E^{ab}/E)\backslash \cZ_{\G,U}(E) \simeq \widehat{E}^{\times,\p}\backslash \widehat{\mathbb{B}}^{\times, \p}  / U^\p \times E_\p^\times \backslash {\mathbb{B}}_\p^{\times} /U_\p.$$
The stabilizer of any $z \in \cZ_{\G,U}(E)$ in $E_\p^\times$ equals $\cO_{c_\p(z)}^\times:= (\cO_{F_\p} + \p^{c_\p(z)} \cO_{E_\p})^\times$ for some unique integer $c_\p(z) \in \N$. 
The map $c_\p \colon  \cZ_{\G,U}(E) \to \N$ is a Galois invariant fibration with the property; any $z\in   \cZ_{\G,U}(E)$ is fixed by the closed subgroup $\rec_E(\cO_{c_\p(z)}^\times) \subset \Gal(E^{ab}/E)$. 

Set $\cZ_{\ge 1}$ for the set of CM points $z\in \cZ_{\G,U}(E)$ such that $c_\p(z)\ge 1$. 
We are interested in  expressing
$$
\Tr(z):= \sum_{\lambda \in \cO_{c_\p(z)}^\times/\cO_{c_\p(z)-1}^\times} \rec_E(\lambda) (z), \forall z\in \cZ_{\ge 1}
$$
in terms of the (local) Hecke algebra action.

\subsubsection{The tree captures the two actions}\label{heckegalsec}
Let $\cT$ be the Bruhat--Tits building of $\GL_2$ over $F_\p$. 
This is a connected tree in which every node has $q + 1$ neighbours. 
Let $\circ \in \cT$ be the hyperspecial point fixed by $U_\p$ and $\cT^\circ:= \{v \in  \mathbb{B}_\p^{\times} \cdot \circ\setminus \circ\}$. 
We have a map $ \pi_\p\colon \Z[\cT^\circ \times \widehat{\mathbb{B}}^{\p,\times}  / U^\p  ] \to \Z[\cZ_{\G,U}(E)]$ of $( {E}_\p^{\times}, \cH_{U_\p}(\Z))$-bi-modules. 
Moreover, the image of $\pi_\p$ is precisely $\Z[\cZ_{\ge 1}]$.

The two actions we are interested in can be visualised through the geometry of the tree $\cT$:
Indeed, the (local) Hecke algebra consists of adjacency operators, meaning the basic element  $T_\p:=[U_\p^\times \text{diag}(\varpi,1) U_\p^\times] \in \cH_{U_\p}(\Z)$, sends a vertex to the formal sum of its neighbours. 
For the (local) Galois action, the $E_\p^\times$-orbit of CM point $z$ is precisely all the elements $z'$ such that $z'^\p=z^\p$ and $c_\p(z)=c_\p(z')$, which corresponds to the set of vertices at distance from the "base" equals to $c_\p(z)$. 
{Here, the base means the set of vertices with conductor $0$. 
The size of this set depends on $\epsilon_\p$. 
We summarise the description of the Galois action in the following picture:}

\begin{forest}
  for tree={%
    delay={%
      if n children=0{%
        label/.wrap pgfmath arg={-90:#1}{content},
        !u.s sep'+=5pt,
        !uu.s sep'+=8pt,
      }{%
        label/.option=content,
      },
      content=,
    },
    circle,
    fill,
    minimum size=5pt,
    inner sep=0pt,
  }
[, phantom,   
[$z_\circ$,name=A0,  
   [[][]]
   [[][]]
   [[[,name=X1,edge={dotted,thick}[][]]][]
]]
[$z_\circ$,name=A  
    [ [] [] ]
    [ [] [] ]
 ]
 [$\text{Fr}_{\mathfrak{P}}(z_\circ)$,name=B
  [ [[,name=X2,edge={dotted,thick}[$z$,name=S, fill=blue][,fill=blue,name=S2]
  ]] [] ]
    [ [] [] ]
 ]
[$\text{Fr}_{\mathfrak{P}}(z_\circ)$,name=C  
    [ [] [] ]
    [ [] [] ]
 ]
 [$z_\circ$,name=D
   [ [[,name=X3,edge={dotted,thick}[][]]] [] ]
 ]
  [$\text{Fr}_{\overline{\mathfrak{P}}}(z_\circ)$,name=E
    [ [] [] ]
    [ [] [] ]
]
];
\draw (C) -- (D) -- (E);
\draw (A) -- (B);
       \node (G) [left=5pt of S] {};
       \node (G2) [right=5pt of S2] {};
 \begin{scope}[font=\sffamily, >=Triangle]
      \node (Y) [above=30pt of A0] {-1};
      \node (X22) [above=1pt of X2] {}; 
      \node (X222) [right=-3pt of X22] {$z'$};
      \node (Y2) [above=30pt of D] {1};
      \node (Y1) [right=105pt of Y] {0};
      \node (Z) [left=45pt of Y] {$\epsilon_{\mathfrak{p}}$};
      \node (Z1) [below=5pt of Z] {};
       \node (Z) [left=5pt of Z1] {$c_{\mathfrak{p}}(z)$};
      \node (Z2) [below=8.5pt of Z] {$0$};
      \node (Z3) [below=13pt of Z2] {$1$};
      \node (Z4) [below=14pt of Z3] {$2$};
      \node (Z5) [below=16pt of Z4] {$n$};
      \node (Z6) [below=15pt of Z5] {$n+1$};
       \node (X4) [right=50pt of X3] {};
     \path [draw, dashed] (A0.east) -- (Z2);
      \path [draw, dashed] (Z5.east) -- (X1.west) (X1.east) -- (X2.west) (X2.east) -- (X3.west) -- (X4.east);
      \path [draw] (-222pt,5pt) -- (180pt,5pt);
      \path [draw] (-222pt,5pt) -- (-222pt,-150pt);
      \path [draw] (-222pt,5pt) -- (-244pt,15pt);
   \end{scope}
 \begin{pgfonlayer}{background}
   \node [draw=none,fill=none,   right= 16pt,minimum size=10pt] at (S.south) {\textcolor{blue}{$\cO_n^\times$-orbit of $z$}};
   \end{pgfonlayer};
\end{forest} 
\subsubsection{The successors operator}
Define the operator $u_\p \in \text{End}_{\Z[U_\p]}\Z[\cT^\circ]$ which sends a vertex $v\neq \circ $ to its successors with respect to the origin $\circ$, in other words
$$
u_\p(v)=\sum_{\text{dist}(v',\circ)=\text{dist}(v,\circ)+1}v'.
$$
Define also the predecessor operator $v_\p \in \text{End}_{\Z[U_\p]}\Z[\cT^\circ]$ sending a vertex $v\neq \circ $ to the unique $v'\in [\circ, v]$ verifying $\text{dist}(v',\circ)=\text{dist}(v,\circ)-1$. 
The operators $u_\p, v_\p$ are $\cO_{E_\p}^\times$-equivariant, hence they both extend to $\Z[\cZ_{\ge 1}]$.

\subsubsection{Combinatorial properties of $u_\p$ yields distribution relations}

Thanks to the discussion of \S \ref{heckegalsec}, we  are able to obtain the following figure which depicts our two preferred actions.
{\begin{center}
 \vspace{-1cm}
\begin{tikzpicture}[
  grow cyclic,
  level distance=2.5cm,	
  level/.style={
     level distance/.expanded={\ifnum#1>3 \tikzleveldistance/2\else\tikzleveldistance\fi},
     nodes/.expanded
  },
  level 1/.style={sibling angle=60},
  level 2/.style={dotted,sibling angle=60},
  level 3/.style={solid,sibling angle=60},
  level 4/.style={solid, sibling angle=30},
  nodes={circle,draw,inner sep=+1pt, minimum size=5pt},
  ]
\path[rotate=0]
 node (L_0)[draw,fill,minimum size=5pt]  []{}
   child  {
     node [draw,fill,minimum size=5pt][] (a){} 
       child { 
         node  [solid,draw,fill,minimum size=5pt] (b) {}
           child  {
             node  [draw,fill,minimum size=5pt] (c) {} 
               child {node [solid,fill=blue,draw,fill,minimum size=5pt] (d){}}   
               child {node [solid,draw,fill=blue,fill,minimum size=5pt] (e){}}   
               child {node [solid,draw,fill=blue,fill,minimum size=5pt] (f){}}
               child {node [solid,draw,fill=blue,fill,minimum size=5pt] (g){}}  
               child {node [solid,draw,fill=blue,fill,minimum size=5pt] (h){}} 
      }
    }
  };
                 \node [draw=none,fill=none, below =1pt] at (L_0.south east) {$z_\circ $};
                 \node [draw=none,fill=none, below right=1pt] at (a.south east) {};
                 \node (z'') [draw=none,fill=none, below right=1pt] at (b.south east) {$z''$};
                 \node (us) [draw=none,fill=none, below =1pt] at (b.south) {};
                 \node (uw) [draw=none,fill=none, left =1pt] at (b.west) {};
                 \node (ua) [draw=none,fill=none, above =1pt] at (b.north) {};
                 \node [draw=none,fill=none, below = 1pt] at (c.south east) {$z'$};
                 \node (x) [draw=none,fill=none, below = 1pt,minimum size=10pt] at (d.west) {$z$};
                 \node (xw) [draw=none,fill=none, left = 1pt,minimum size=10pt] at (d.west) {};
                 \node (xe) [draw=none,fill=none, right = 1pt,minimum size=10pt] at (d.west) {};
                 \node (yw) [draw=none,fill=none, left= 4pt] at (e.south west) {};
                 \node (ye) [draw=none,fill=none, right= 4pt] at (e.south west) {};
                 \node (zw) [draw=none,fill=none,  left= 4pt,minimum size=10pt] at (f.west) {};
                 \node (ze) [draw=none,fill=none,  right= 4pt,minimum size=10pt] at (f.west) {};
                 \node (sw)[draw=none,fill=none,  left= 4pt,minimum size=10pt] at (g.west) {};
                 \node (se) [draw=none,fill=none,  right= 4pt,minimum size=10pt] at (g.west) {};
                 \node (tw) [draw=none,fill=none, below left= 4pt,minimum size=10pt] at (h.north) {};
                 \node (tf) [draw=none,fill=none, below left= 4pt,minimum size=10pt] at (f.north) {};
                 \node (te) [draw=none,fill=none,  right= 4pt,minimum size=10pt] at (h.west) {};
                 \node (ta) [draw=none,fill=none,  above= 4pt,minimum size=10pt] at (h.west) {};
                 \node [draw=none,fill=none,   right= 6pt,minimum size=10pt] at (g.north west) {\textcolor{blue}{The support of $u_\p(z)$ is }};
                 \node [draw=none,fill=none, right=2pt, minimum size=10pt] at (ze.north)  {\textcolor{blue}{$\cO_n^\times$-orbit of $z$}};
                 \node (z''') [draw=none,fill=none,above=5pt] at (z''.north) {};
                 \node [draw=none,fill=none] at (z'''.north west) {\textcolor{red}{Hecke support of $T_\p(z')$}};
	\begin{pgfonlayer}{background}
     \draw[red,fill=red,opacity=0.1](x.south) to[closed,curve through={(xw.west) .. (yw.west) ..(zw.south) .. (sw.south) .. (tw.south) .. (us.east) .. (uw.west) ..(ua.north) .. (ta.north west) ..(te.north) .. (se.north) .. (ze.north) .. (ye.east) ..(xe.east) }](x.south);
	\end{pgfonlayer}
\end{tikzpicture}     
\end{center}
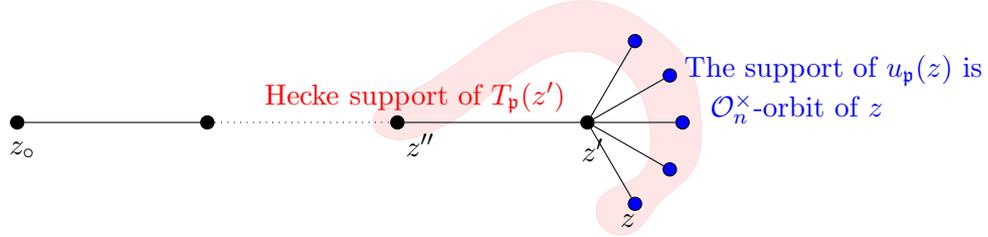
\captionof{figure}{\textbf{Hecke and Galois via $u_\p$}\label{fig1}.}}
\begin{lemma}
The operators $u_\p$ verify the following properties
\begin{enumerate} 
\item[(i)]\textbf{The Hecke side}:  
$v_\p \circ u_\p= p \, \text{Id}_{\cT^\circ} \text{($\neq u_\p \circ  v_\p $) and }T_\p=u_\p + v_\p$.
\item[(ii)]\textbf{The Galois side}: Let $z$ be a CM point with $c_\p(z)=n$. 
If $n\ge 2$, set $z'=v_\p(z)$. 
We have $c_\p(z')=n-1$ and 
$$\Tr(z)=u_\p(z')=T_\p(z')-z''.$$
\end{enumerate}
\end{lemma}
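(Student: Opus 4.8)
The plan is to deduce both parts from the combinatorics of the tree $\cT$, using the dictionary of \S\ref{heckegalsec}: $\pi_\p$ transports $\Z[\cT^\circ]$ to $\Z[\cZ_{\geq 1}]$, the conductor $c_\p$ becomes the distance to the base, the $E_\p^\times$-action becomes the action on the tree factor, and $T_\p$ becomes the adjacency operator of $\cT$. Part (i) is a statement about $\cT^\circ$, so I would argue there directly; the dictionary is then used only in (ii), to carry (i) over to $\Z[\cZ_{\geq 1}]$. For (i), I would fix a vertex $v\in\cT^\circ$. Since $\cT$ is a tree, $v$ has a single neighbour $v_\p(v)$ on the geodesic $[\circ,v]$, and the remaining neighbours of $v$ are exactly the successors whose sum is $u_\p(v)$; partitioning the neighbour set of $v$ this way and using that $T_\p$ is the adjacency operator gives $T_\p(v)=v_\p(v)+u_\p(v)$, hence $T_\p=u_\p+v_\p$ on $\cT^\circ$. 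For the first identity, observe that for each successor $w$ of $v$ the geodesic $[\circ,w]$ runs through $v$, so $v_\p(w)=v$; therefore $v_\p\bigl(u_\p(v)\bigr)$ equals $v$ counted once per successor, i.e. $v_\p\circ u_\p=p\,\text{Id}_{\cT^\circ}$. The same picture gives the non-commutation: for $\dist(v,\circ)\geq 2$ the element $u_\p\bigl(v_\p(v)\bigr)$ is the sum of \emph{all} successors of $v_\p(v)$, of which $v$ is only one. (That $v_\p$ a priori carries distance-$1$ vertices out of $\cT^\circ$ is harmless in (ii), where it is only evaluated on elements of $\cZ_{\geq 1}$, with output recorded in $\Z[\cZ_{\G,U}(E)]$.)

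For (ii), I would take $z\in\cZ_{\geq 1}$ with $c_\p(z)=n\geq 2$ and put $z'=v_\p(z)$. By the dictionary $z'$ is the predecessor of $z$, so $c_\p(z')=n-1\geq 1$ and $z'\in\cZ_{\geq 1}$; recall also $\Stab_{E_\p^\times}(z)=\cO_n^\times\subseteq\cO_{n-1}^\times=\Stab_{E_\p^\times}(z')$. The crux is to identify the orbit-sum defining $\Tr(z)$ with the sum over the successors of $z'$: since $\rec_E(\cO_{n-1}^\times)$ fixes $z'$, it permutes the CM points above $z'$, i.e. the successors of $z'$ in $\cT$; this action is transitive, with the stabiliser of $z$ in it equal to $\rec_E(\cO_n^\times)$. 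Hence
$$\Tr(z)=\sum_{\lambda\in\cO_{n-1}^\times/\cO_n^\times}\rec_E(\lambda)(z)=\sum_{w\ \text{a successor of}\ z'}w=u_\p(z'),$$
and applying the identity $T_\p=u_\p+v_\p$ from part (i) to $z'$ gives $u_\p(z')=T_\p(z')-v_\p(z')=T_\p(z')-z''$ with $z'':=v_\p(z')$, which is the claim.

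I expect the single genuine obstacle to be that transitivity statement — that $\rec_E(\cO_{n-1}^\times)$ acts transitively, with stabiliser $\rec_E(\cO_n^\times)$ at $z$, on the CM points above $z'$. It is the only ingredient that is arithmetic rather than pure tree combinatorics, and it is where the trichotomy $\epsilon_\p\in\{-1,0,1\}$ enters, through the shape of the base (one vertex when $\p$ is inert, a pair of vertices joined across the ramification when $\p$ ramifies, an apartment when $\p$ splits). I would dispatch it by invoking the description of the $E_\p^\times$-orbits on $\cT$ recalled in \S\ref{heckegalsec} (cf. \cite{cornut-vatsal}) rather than redoing the order-theoretic computation; granting that, everything else — the predecessor/successor splitting of the neighbour set, $T_\p=u_\p+v_\p$, and $v_\p\circ u_\p=p\,\text{Id}$ — is immediate from $\cT$ being a tree and $T_\p$ its adjacency operator.
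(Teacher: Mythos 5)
Your proof is correct and follows essentially the same route as the paper, whose entire proof is to read these identities off the tree picture (Figure \ref{fig1}): you merely make explicit the predecessor/successor partition of the neighbour set of a vertex and the fact that the $\cO_{n-1}^\times$-orbit of $z$ is exactly the set of successors of $z'=v_\p(z)$ with stabiliser $\cO_n^\times$, which is the orbit description recalled in \S\ref{heckegalsec}. One caveat inherited from the paper's own statement: your count produces one copy of $v$ per successor, and there are $q=p^r$ successors, so the identity is really $v_\p\circ u_\p=q\,\mathrm{Id}$ (agreeing with the stated $p$ only when the residue field has $p$ elements, e.g.\ the classical modular-curve case $F=\Q$).
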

\begin{proof}
These statements can readily be extracted from Figure \ref{fig1}.
\end{proof}

{The Hecke side may be interpreted as a "lift" to characteristic zero of the well known Eichler--Shimura relation for Shimura curves $T_\p=\Fr_\p +\Fr_\p^* $ (where $\Fr_\p^*$ denotes the dual correspondence of $\Fr_\p$) \cite[Proposition 1.4.10]{SZhang2001}. 
An immediate consequence of the Hecke side is that $u_\p$ (resp. $v_\p$) is a right (resp. left) root of the Hecke polynomial $H_\p(X)=X^2
- T_\p X + p$, i.e.
$$u_\p^2 - T_\p \circ u_\p + p=0.$$
We insist on the fact that the $T_\p$, $v_\p $ and $ u_\p$ do not commute. 
This non-commutativity, was the main source of computational complications in \cite{BBJ18} for the unitary case $\G=\U(1,1)\times \U(2,1)$.
\begin{remark}
Actually, for any choice of a point $\xi \in \cT^\infty$ in the tree at infinity, one can still define a similar successor operator $u_\xi$ with respect to $\xi$. It is clear that it still a right root of the Hecke polynomial $H_\p$. 
\end{remark}

{It becomes clear that these curious combinatorial operators allow us to link and capture our two favorite actions, however they have an unpleasant limitation: they depend on the "base" which itself depends on ramification assumptions. 
For this reason, the "correct" definition of $ u_ \p $ should be the "successor" operator with respect to the point in the building at infinity corresponding to a fixed Borel subgroup. 

Although, we mainly focused on vertical distribution relations, the modified operator $u_\p$ may also be used to prove horizontal norm compatible systems of CM points, which are an essential ingredient in Kolyvagin's proof (combined with the Gross--Zagier formula) of the strongest evidence towards the Birch and Swinerton-Dyer conjecture.}

\subsection{Main results}

Let $\G$ be a reductive group defined over a non-archimedean local field $F$. 
Let $\Sbf$ be maximal $F$-split subtorus of $\G$ and $\cA$ its corresponding apartment in the reduced Bruhat--Tits building $\cB_{\red}(\G,F)$ of $\G$. 
Set $\Mbf$ and $\Nbf$ for the centralizer and the normalizer of $\Sbf$ in $\G$, respectively. We have a homomorphism of groups $\nu_N \colon \Nbf(F) \to \Aff(\cA)$. 
Let $\a\subset \cA$ be an alcove, and $a_\circ \in \overline{\a}$ a special point. 
Set $I\subset K$ for the Iwahori subgroup and the special maximal parahoric subgroup corresponding to  ${\a}$ and $a_\circ$, respectively. 
Let $\cG$ be the connected group scheme attached to $a_\circ$, and $K=\cG(\cO_F)$ be the corresponding special maximal parahoric subgroup. 
Set $I\subset K$ for the Iwahori subgroup corresponding to ${\a}$.

This is a sequel of \cite{UoperatorsI2021}, so we will borrow some notations from {\em loc. cit}.

Let $M_1 = \ker \kappa_M$ and $G_1=\ker \kappa_G$ be the kernels of the Kottwitz map for $\Mbf$ and $\G$ (see for example \cite[\S \ref{Kottwitzhom}]{UoperatorsI2021}). 
Fix any open compact subgroup $ M_1 \subset M^{\flat}\subset M$. 
By \cite[Proposition 1.2]{Lan96}, $M^{\flat}$ is contained in the maximal open compact subgroup $M^{1}$. 
Set $\Lambda_M^{\flat}:=\Mbf(F)/M^{\flat}$, $\Lambda_{\aff}^{\flat}:=(\Mbf(F) \cap G_1)^{\flat}/M^{\flat}$, $ \mathcal{R}^{\flat}:=\Z[\Lambda_M^{\flat}]$ and $ \mathcal{R}_{\aff}^{\flat}:=\Z[\Lambda_{\aff}^{\flat}]$.

For any $X\subset G$, set $X^{\flat}=M^{\flat} X$. This procedure does not alter the structure of the object we are interested in, e.g. if $K_\cF$ is a parahoric subgroup attached to a facet $\cF \subset \cA$, then $K_\cF^{\flat}$ is still a group.

When $M^{\flat}=M_1$, the resulting objects will sometimes be described as "parahoric" and the supscript ${\flat}$ will be simply omitted, e.g. $I^{\flat}=I$ the Iwahori subgroup, $K_\cF^{\flat}=K_\cF$ the parahoric subgroup attached to a facet $\cF$, and so on ... 
When $M^{\flat}=M^1$, the resulting objects will be described as "geometric" and ${\flat}$ will be replaced simply by $1$, e.g. $I^{1} =\widetilde{I},K^{1}= \widetilde{K}$.

If $A$ is a ring and $H\subset \G(F)$ an open compact subgroup, define $\cH_H(A)$ to be the $A$-algebra of functions $f \colon \G(F) \to A$, that are $H$-invariant on the right and the left.

\subsubsection{Integrality of $\mathbb{U}^{\flat}$-operators}

In Corollary \ref{Zinjectintegralitytilde}, we obtain an embedding of rings $ \mathcal{R}^{-,\flat}\hra  \End_{\Z[B]}\Z[G/K^{\flat}]
$. 
Set $\mathbb{U}^{\flat}$ ({which stands for {\em "upper"}}) for its image, its elements are called $\mathbb{U}^{\flat}$-operators.
\begin{customtheorem}{\ref{integralitiyoperatorsu}}
For any ${u} \in \mathbb{U}^{\flat}$, there exists a monic polynomial $Q_u(X)=\sum {h}_k X^k \in \cH_{K^{\flat}}(\Z)[X]$ such that $u$ is a right root of $Q_u$, i.e.
$$\sum {h}_k\circ  u^k=0 \text{ in } \text{\emph{End}}_{\Z[B]}\Z[G/K^{\flat}].$$ 
\end{customtheorem}
A surprising aspect for proving this integrality result is that to establish formulas relating the two non-commuting commutative subrings, $\mathbb{U}^{\flat}$ and $\cH_{K^{\flat}}(\Z)$, of $\End_{\Z[B]}\Z[G/K]$, we embed them both in yet another non-commutative ring (the Iwahori--Hecke algebra), where they actually do commute. 
This passage is achieved using the compatibility of the (twisted) Bernstein and Satake isomorphism \cite[Theorem \ref{compatibility}]{UoperatorsI2021}.
 
\subsubsection{Geometric interpretation and Filtrations}

In \S \ref{retractionsanduoperators}, we translate the purely group theoretic ring $\mathbb{U}^{1}$ into a more combinatorial fashion. 
This will provide a new class of geometric operators on the set of special vertices in the extended building, which can be thought of as a conceptual generalization of the successor operators for trees with a marked end.

In \S \ref{filtrationandU}, we present yet another alternative geometric point of view for the geometric operators ring $\mathbb{U}^{1}$ using the notion of filtrations, this was suggested by C. Cornut.

\subsection{Further applications}

Upon completion of this paper, the author discovered another "incarnation" of the geometrico-combinatorial $\mathbb{U}$-operators which he was not aware of. 
This incarnation arises (under the name $U_p$ operator) in various places in the theory of automorphic forms: For example, this operator appears in the study of congruences between values of the partition function $p(n)$ \cite{AB2003,citeFKO2012},  
in the study of class polynomials \cite{AO2005} and in the theory of harmonic Maass forms \cite{Ono2008}. 

The properties of the $U_p$-operator play a crucial function both in Hida theory to define the ordinary projector \cite{Hida1986} 
and in Coleman and Mazur theory of overconvergent $p$-adic modular forms of finite slope \cite{colemanmazur1998} (the $\GL_2$ case), which has been later generalized to more general classes of eductive groups by Buzzard \cite{Buzzard2004}, Chenevier \cite{chenevier2004,Chenevier2011} and Urban \cite{Urban2011}.

{The author hopes that the perspective of this paper can provide more insight into the applications mentioned above.}
\subsection{Funding}
The author was supported by the Swiss National Science Foundation grant \#PP00P2-144658 and \#P2ELP2-191672.
\subsection{Acknowledgements}
This work is based partially on results proved in the author’s thesis, directed by D. Jetchev, to whom I am very grateful. I am also thankful to C. Cornut for his support and meticulous reading.

\newpage
\section{The ring of \texorpdfstring{$\mathbb{U}$}{U}-operators and its action}\label{ALring}

\subsection{Preliminary results}

Let $\Psi$ be a closed\footnote{A subset $\Psi \subset\Phi_+ \cap \Phi_{\red}$ such that for any $\alpha,\beta \in \Psi$ one has $[\alpha,\beta]:=\{n\alpha+m \beta\colon \text{ for all }n,m \in\Z_{>0} \}\cap \Phi \subset \Psi$.} 
subset of $\Phi_{\red}^+:=\Phi^+ \cap \Phi_{\red}$. 
There exists a unique closed, connected, unipotent $F$-subgroup $\U_\Psi\subset \G$, that is normalized by $\Mbf$ and such that the product morphism
\begin{equation}\label{isoPsi}\prod_{\alpha\in \Psi\cap \Phi_{\red}} \U_\alpha\to \U_\Psi,\tag{$\ast$}\end{equation}
is an isomorphism of $F$-varieties, here the product is taken in any order for $\Psi$ (see for example \cite[\S 14.4]{Bor91}).
\begin{lemma}\label{UKUKtilde}
Let $\cF$ be any facet in $ \cA$. 
For any closed subset of roots $\Psi\subset \Phi_{\red}^+$, we have 
$$ U_{\pm \Psi}\cap K_\cF^{\flat}= U_{\pm\Psi} \cap U_\cF= \prod_{\alpha \in \pm \Psi}^{\prec}U_{\alpha+f_\cF(\alpha)},$$
for any fixed ordering $\prec$ on $\Psi$.
\end{lemma}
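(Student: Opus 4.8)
The statement is really three equalities, and I would prove the middle one and one of the outer ones, since the other inclusions are either obvious or follow by symmetry. First note that the parahoric $K_\cF^{\flat}$ contains $U_\cF$ (the subgroup generated by the $U_{\alpha+f_\cF(\alpha)}$ over all $\alpha \in \Phi_{\red}$), and that $M^{\flat}$ normalizes everything in sight, so it suffices to work with the parahoric case and restore the $\flat$-decoration at the end — the $M^{\flat}$ factor contributes nothing to a unipotent intersection since $M^{\flat}$ meets $U_{\pm\Psi}$ trivially. Thus I would reduce to showing $U_{\pm\Psi}\cap K_\cF = U_{\pm\Psi}\cap U_\cF = \prod^{\prec}_{\alpha\in\pm\Psi}U_{\alpha+f_\cF(\alpha)}$.

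The inclusion $\prod^{\prec}_{\alpha\in\pm\Psi}U_{\alpha+f_\cF(\alpha)}\subseteq U_{\pm\Psi}\cap U_\cF$ is clear: each factor lies in $U_{\pm\Psi}$ by the product decomposition \eqref{isoPsi} (applied to the closed set $\Psi$, or $-\Psi$, which is again closed), and each factor lies in $U_\cF\subset K_\cF$ by definition of the affine root subgroups attached to the facet. For the reverse inclusion $U_{\pm\Psi}\cap K_\cF\subseteq \prod^{\prec}_{\alpha\in\pm\Psi}U_{\alpha+f_\cF(\alpha)}$, I would use the Iwahori/parahoric factorization: an element $u\in U_{\pm\Psi}$ can be written uniquely, via the isomorphism \eqref{isoPsi}, as an ordered product $\prod^{\prec}_{\alpha\in\pm\Psi}u_\alpha$ with $u_\alpha\in U_\alpha(F)$. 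The point is that if $u\in K_\cF$, then each $u_\alpha$ must itself lie in $U_{\alpha+f_\cF(\alpha)} = U_\alpha(F)\cap K_\cF$. This is the standard "uniqueness of the decomposition forces each coordinate into the bounded subgroup" argument: one applies the known product decomposition of $K_\cF\cap \U_{\pm\Psi}$ coming from the Bruhat–Tits theory of the group scheme $\cG_\cF$ attached to $\cF$ (the big cell of $\cG_\cF$ over $\cO_F$ decomposes as the product of the root group schemes $\cU_{\alpha+f_\cF(\alpha)}$), compares it with the decomposition over $F$, and invokes uniqueness. Concretely: $U_{\pm\Psi}$ is the generic fiber of a smooth closed $\cO_F$-subgroup scheme $\cU_{\pm\Psi}$ of $\cG_\cF$ which is a product of the $\cU_{\alpha+f_\cF(\alpha)}$ as schemes, hence $\cU_{\pm\Psi}(\cO_F) = \prod^{\prec}\cU_{\alpha+f_\cF(\alpha)}(\cO_F) = \prod^{\prec}U_{\alpha+f_\cF(\alpha)}$, and $U_{\pm\Psi}\cap K_\cF = U_{\pm\Psi}(F)\cap \cG_\cF(\cO_F) = \cU_{\pm\Psi}(\cO_F)$ because $\cU_{\pm\Psi}$ is closed in $\cG_\cF$.

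Finally, $U_{\pm\Psi}\cap U_\cF = U_{\pm\Psi}\cap K_\cF$: the inclusion $\subseteq$ is trivial since $U_\cF\subseteq K_\cF$, and $\supseteq$ is what we just proved, since the right-hand side $\prod^{\prec}U_{\alpha+f_\cF(\alpha)}$ visibly lies in $U_\cF$. Independence of the ordering $\prec$ is part of the content of \eqref{isoPsi} (the product may be taken in any order), so the displayed formula holds for every ordering. The main obstacle is the middle inclusion of the previous paragraph — transferring between the $F$-group decomposition \eqref{isoPsi} and the integral/parahoric decomposition and justifying that $\cU_{\pm\Psi}$ is genuinely a closed $\cO_F$-subgroup scheme of $\cG_\cF$ whose $\cO_F$-points are the claimed ordered product; everything else is bookkeeping with closed sets of roots and the normalizing action of $M^{\flat}$.
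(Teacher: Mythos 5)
Your plan is sound and reaches the right conclusion, but it proves the crucial containment by a different mechanism than the paper. The paper stays entirely at the level of group-theoretic decompositions imported from the prequel: it quotes the identity $U_\alpha\cap K_\cF=U_{\alpha+f_\cF(\alpha)}$ for each $\alpha\in\Phi_{\red}$, the product isomorphism (\ref{isoPsi}), and the prequel's parahoric decomposition ($K_\cF=U_\cF^+U_\cF^-N_{1,\cF}$ with $U_\cF^{\pm}$ an ordered product of affine root groups), and then reads off $U_{\pm\Psi}\cap K_\cF$ by uniqueness of the coordinates in (\ref{isoPsi}). You instead descend to the Bruhat--Tits integral models: the closed smooth $\cO_F$-subgroup scheme of $\cG_\cF$ with generic fiber $U_{\pm\Psi}$ is, as an $\cO_F$-scheme, the product of the root group schemes in any order, and the valuation-ring/closedness argument identifies $U_{\pm\Psi}\cap\cG_\cF(\cO_F)$ with its $\cO_F$-points, hence with $\prod^{\prec}U_{\alpha+f_\cF(\alpha)}$. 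Both routes ultimately rest on the same results of Bruhat--Tits; yours re-derives the needed statement at the source (more self-contained, but heavier machinery on the page), while the paper's one-line proof simply delegates to the prequel. Your sandwiching of the middle term $U_{\pm\Psi}\cap U_\cF$ and the remark that independence of $\prec$ is built into (\ref{isoPsi}) match the paper.

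The one step where your justification is genuinely too quick is the reduction from $K_\cF^{\flat}$ to $K_\cF$. From $M^{\flat}\cap U_{\pm\Psi}=\{1\}$ alone you cannot conclude $U_{\pm\Psi}\cap M^{\flat}K_\cF=U_{\pm\Psi}\cap K_\cF$: for subgroups $A,B,C$ of a group, $A\cap B=\{1\}$ does not imply $A\cap BC=A\cap C$. What makes the reduction work here is the Iwahori-type factorization $K_\cF^{\flat}=U_\cF^+U_\cF^-N_{1,\cF}^{\flat}$ combined with the uniqueness statements $N\cap U^{-}U^{+}=\{1\}$ and $U^{+}\cap U^{-}=\{1\}$: if $u\in U^{+}\cap K_\cF^{\flat}$ (say $\Psi\subset\Phi_{\red}^{+}$) is written $u=u^{+}u^{-}n$ with $u^{\pm}\in U_\cF^{\pm}$ and $n\in N_{1,\cF}^{\flat}$, then $n=(u^{-})^{-1}(u^{+})^{-1}u\in N\cap U^{-}U^{+}=\{1\}$, whence $(u^{+})^{-1}u=u^{-}\in U^{+}\cap U^{-}=\{1\}$ and $u=u^{+}\in U_\cF^{+}\subset K_\cF$. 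With that patch (or by quoting the prequel's decomposition of $K_\cF^{\flat}$ directly, which is what the paper implicitly does), your argument goes through.
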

\begin{proof}
By \cite[Proposition \ref{propertiesII3.3} \ref{intersectionUpomega}]{UoperatorsI2021}, 
we have $U_{\alpha} \cap K_\cF= U_{\alpha+f_\cF(\alpha)}$ for any $\alpha \in \Phi_{\red}$. 
Combining this with the isomorphism (\ref{isoPsi}) and with \cite[Proposition \ref{decomppara}]{UoperatorsI2021}, we obtain that 
$$U_{\Psi}\cap {K}= \prod_{\alpha \in \Psi}^{\prec}U_{\alpha+f_\cF(\alpha)}$$
for any fixed ordering $\prec$ on $\Psi$.
\end{proof}

\begin{lemma}\label{sameU+}\label{BKI}
Let $\cF$ be a facet of $\a$ that contains in its closure $a_\circ$. 
For any closed subset of roots $\Psi\subset \Phi_{\red}^+$, we have
$$K_\cF^{\flat}\cap U_\Psi=K\cap U_\Psi.$$
In particular, for $\Psi= \Phi_{\red}^+$, we get $I^+=U_\cF^+=U_{a_\circ}^+$.
\end{lemma}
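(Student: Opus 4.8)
The plan is to reduce everything to the explicit root-group descriptions supplied by Lemma \ref{UKUKtilde}. Since $\Psi \subset \Phi_{\red}^+$ is closed, Lemma \ref{UKUKtilde} applied to the facet $\cF$ gives $U_\Psi \cap K_\cF^{\flat} = \prod_{\alpha\in\Psi}^{\prec} U_{\alpha + f_\cF(\alpha)}$, and applied to the special point $a_\circ$ (or rather the facet containing it, i.e. to $K$ itself) gives $U_\Psi \cap K = \prod_{\alpha\in\Psi}^{\prec} U_{\alpha + f_K(\alpha)}$, for any fixed ordering $\prec$ on $\Psi$. So it suffices to show that for every $\alpha \in \Psi \subset \Phi_{\red}^+$ one has $U_{\alpha + f_\cF(\alpha)} = U_{\alpha + f_K(\alpha)}$, i.e. that the filtration jump $f_\cF(\alpha)$ agrees with $f_K(\alpha) = f_{a_\circ}(\alpha)$ for positive roots.

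The key geometric input is that $a_\circ$ lies in the closure of $\cF$, and $\cF$ is a facet of the alcove $\a$; hence $a_\circ \in \overline{\cF} \subset \overline{\a}$, and all these facets lie inside the fixed apartment $\cA$. The concave function $f_\cF$ attached to a facet is, by definition, $f_\cF(\alpha) = -\inf_{x\in\cF}\alpha(x)$ (with the appropriate optimal-value/ceiling normalization of the affine root system from \cite{UoperatorsI2021}), and likewise $f_{a_\circ}(\alpha) = -\alpha(a_\circ)$. Because $a_\circ \in \overline{\cF}$ we always have $-\alpha(a_\circ) \le f_\cF(\alpha)$, with possible strict inequality in general; the point is that for $\alpha \in \Phi_{\red}^+$ there is no jump, because $\cF$ is a face of the \emph{alcove} $\a$ whose own vertex $a_\circ$ already sits on the relevant walls. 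Concretely, on the alcove $\a$ the positive affine roots are nonnegative, so the minimal value of $\alpha$ over $\overline{\a}$ on the positive-root side is attained at $a_\circ$ (this is exactly what makes $a_\circ$ special and $\a$ adjacent to it), and since $\cF \subset \overline{\a}$ with $a_\circ \in \overline{\cF}$, the infimum of $\alpha$ over $\cF$ equals $\alpha(a_\circ)$ for $\alpha$ positive. This forces $f_\cF(\alpha) = f_{a_\circ}(\alpha)$ for all $\alpha \in \Phi_{\red}^+$, and the displayed products coincide term by term.

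For the last assertion, take $\Psi = \Phi_{\red}^+$: then $K\cap U_{\Phi_{\red}^+} = U^+_{a_\circ}$ by the definition of $U^+_{a_\circ}$ as $\prod^{\prec} U_{\alpha+f_{a_\circ}(\alpha)}$ over positive roots, while $K_\cF^{\flat}\cap U_{\Phi_{\red}^+} = U^+_\cF$ by the same description for $\cF$; applying the result with $\cF$ equal to the alcove $\a$ itself gives in addition $I^+ = U^+_\a = U^+_\cF = U^+_{a_\circ}$. I expect the main obstacle to be purely bookkeeping: making sure the normalization of the concave functions $f_\cF$ from \cite{UoperatorsI2021} (ceiling versus floor, and the handling of roots $\alpha$ with $2\alpha\in\Phi$, which is why we restrict to $\Phi_{\red}$) is used consistently, so that the equality $f_\cF(\alpha)=f_{a_\circ}(\alpha)$ for positive $\alpha$ holds on the nose rather than up to the ambiguity that genuinely occurs for non-special facets or non-positive roots.
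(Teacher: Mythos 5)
Your proposal is correct and follows essentially the same route as the paper: the paper's proof is exactly "Lemma \ref{UKUKtilde} plus \cite[Example \ref{exfomega}]{UoperatorsI2021}", and your reduction via the root-group product decomposition to the equality $f_\cF(\alpha)=f_{a_\circ}(\alpha)=0$ for $\alpha\in\Phi_{\red}^+$ (using $a_\circ\in\overline{\cF}\subset\overline{\a}$ and the normalization $\overline{\a}=\{0\le\alpha(a-a_\circ)\le n_\alpha^{-1}\}$) is precisely the content of that cited example, which you simply re-derive instead of quoting.
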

\begin{proof}
This is a direct consequence of Lemma \ref{UKUKtilde} and \cite[Example \ref{exfomega}]{UoperatorsI2021}.
\end{proof}

\subsection{Iwasawa decomposition for Levi subgroups}

\begin{proposition}[Iwasawa decomposition]\label{Iwasawadec}
If $\P=\mathbf{L}\ltimes \U_P^+$ is any standard parabolic subgroup of $\G$ with Levi factor $\mathbf{L}$ and unipotent radical $\U_P^+$, then $G=P K$, i.e. $K$ is a good open compact subgroup of $G$.    
Moreover, for any facet $\cF\subset \cA$ one has $P\cap K_\cF=(L\cap K_\cF) \cdot (U_P^+\cap K_\cF)$ and
$$L\cap K_\cF=(U_\cF^+\cap L) \cdot (U_\cF^- \cap L)\cdot (N_{1, \cF}\cap L)= (U_\cF^+\cap L) \cdot (U_\cF^- \cap L)\cdot (U_\cF^+\cap L)\cdot  M_1.$$
\end{proposition}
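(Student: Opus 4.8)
The plan is to establish the three assertions in order: the global Iwasawa decomposition $G = PK$, the parahoric-level refinement $P \cap K_\cF = (L \cap K_\cF)(U_P^+ \cap K_\cF)$, and finally the product decomposition of $L \cap K_\cF$. For the first statement, I would reduce to the case of a minimal parabolic. Since any standard parabolic $\Pbf$ contains a minimal one $\Pbf_0 = \Mbf \ltimes \U^+$, it suffices to prove $G = P_0 K$, i.e. $G = M U^+ K$. This is the classical Iwasawa decomposition for the special maximal parahoric $K$; it follows from the Bruhat--Tits theory (the building $\cB_{\red}(\G,F)$ is the union of apartments, every apartment meets the chamber of $K$, and the stabilizer considerations give $G = \Nbf(F) \cdot (U^+ \cap K)\cdot K$, which after absorbing $\Nbf(F) \subset M\cdot(\text{stuff})$ collapses to $M U^+ K$). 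Alternatively one invokes the Iwasawa decomposition in the form already available from \cite{UoperatorsI2021} for the group $G_1$ and then extends by the Kottwitz map. The point "$K$ is good" is then just a restatement.

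For the second assertion, I would argue with the Iwahori--Bruhat-type factorization. Given $x \in P \cap K_\cF$, write $x = \ell \cdot n$ with $\ell \in L$, $n \in U_P^+$ using $P = L \ltimes U_P^+$. The task is to show that one can choose this factorization with both factors in $K_\cF$. Here I would use the decomposition of the parahoric $K_\cF$ relative to the root groups --- namely \cite[Proposition \ref{decomppara}]{UoperatorsI2021} --- which expresses $K_\cF$ as a product of $U_\cF^\pm$ and $N_{1,\cF}$, together with Lemma \ref{UKUKtilde} identifying $U_{\pm\Psi} \cap K_\cF$ with $\prod^{\prec} U_{\alpha + f_\cF(\alpha)}$. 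Splitting the set of positive roots into those in the Levi $\Phi_L^+$ and those in $\U_P^+$ (a partition into closed subsets, since $\Pbf$ is standard), one sees that the root-group factors of $x$ lying outside $L$ assemble to an element of $U_P^+ \cap K_\cF$ and the remaining factors, together with the $N_1$-part (which lies in $L$ since $\Mbf \subset \Lbf$), assemble to an element of $L \cap K_\cF$; uniqueness of the Levi decomposition forces these to be $\ell$ and $n$. This gives $P \cap K_\cF = (L\cap K_\cF)(U_P^+ \cap K_\cF)$.

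For the last assertion, I would apply the parahoric decomposition \cite[Proposition \ref{decomppara}]{UoperatorsI2021} to $K_\cF$ itself, $K_\cF = U_\cF^+ \cdot U_\cF^- \cdot N_{1,\cF}$, intersect with $L$, and use that the root groups $U_\alpha$ for $\alpha \in \Phi_L$ lie in $L$ while those for $\alpha \notin \Phi_L$ meet $L$ trivially, so that $U_\cF^\pm \cap L = \prod^{\prec}_{\alpha \in \pm\Phi_L^{\red}} U_{\alpha + f_\cF(\alpha)} = U_\cF^\pm(L)$, the analogous object for the reductive group $\Lbf$; combined with $N_{1,\cF} \cap L = (N_{1,\cF} \cap L)$ and the fact (from the $\Lbf$-analogue of Lemma \ref{sameU+} applied inside $\cA$, or directly from \cite[Example \ref{exfomega}]{UoperatorsI2021}) that $N_{1,\cF}\cap L$ further factors as $(U_\cF^+\cap L)\cdot M_1$ times a piece already accounted for, one gets the two displayed equalities. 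The hypothesis that $\cF$ contains $a_\circ$ in its closure is what guarantees, via Lemma \ref{sameU+}, that the relevant $U^+$-parts are insensitive to whether we work at level $K_\cF$ or $K$, which is needed to make the $N_1$-rewriting clean.

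The main obstacle I anticipate is the bookkeeping in the second step: verifying that the ordering $\prec$ on $\Phi_{\red}^+$ can be chosen compatibly with the partition $\Phi_L^+ \sqcup \Phi(\U_P^+)$ so that the product factorization of $K_\cF$ actually separates into a Levi part and a unipotent-radical part, rather than merely proving the inclusion $\supseteq$. This requires knowing that $\Phi(\U_P^+)$ is not just closed but an \emph{ideal} in the appropriate sense within $\Phi^+$ (so that root-group commutators stay inside $\U_P^+$), which holds precisely because $\Pbf$ is a standard parabolic; once that is in hand the rest is a careful but routine manipulation of the product decompositions already recorded in \cite{UoperatorsI2021}.
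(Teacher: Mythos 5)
The global statement $G=PK$ (your first step) is fine, and in fact the paper does not reprove any part of this proposition: its proof is a two-line citation, to \cite[Corollary \ref{Iwasawadeccor}]{UoperatorsI2021} for $G=PK$ and to \cite[\S 6.8]{HV15} for both factorization statements. The genuine gap in your plan is the second step, which is exactly what the citation to Henniart--Vign\'eras is there to cover. If you expand $x\in P\cap K_\cF$ through the parahoric factorization $K_\cF=U_\cF^+\,U_\cF^-\,N_{1,\cF}$, the factors ``lying outside $L$'' are not only attached to roots in $\Phi^+\setminus\Phi_L$: the $U_\cF^-$-part contributes factors in $U_P^-\cap K_\cF$, which lie neither in $\Lbf$ nor in $\U_P^+$, and your assembly claim silently discards them. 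Moreover the parenthetical ``the $N_1$-part lies in $L$ since $\Mbf\subset\Lbf$'' is false: $N_{1,\cF}=N\cap K_\cF$ surjects onto $W_\cF$, and already for $\cF=\{a_\circ\}$ --- the case actually used later in the paper, $P\cap K=(L\cap K)(U_P^+\cap K)$ --- this is the full finite Weyl group, which is not contained in $L$.

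Nor can this be repaired by the ``routine bookkeeping'' you anticipate, i.e.\ by reordering the product into an Iwahori-type factorization $K_\cF=(K_\cF\cap U_P^-)(K_\cF\cap L)(K_\cF\cap U_P^+)$ and then killing the $U_P^-$-component via $P\cap U_P^-=\{1\}$ and uniqueness of the Levi decomposition: such a factorization of $K_\cF$ relative to $P$ is simply false when $K_\cF=K$ is special maximal. For $\G=\GL_2$, $K=\GL_2(\Zp)$ and $P=B$, the element $\abcd{0}{1}{1}{0}\in K$ is not in $(K\cap U^-)(K\cap M)(K\cap U^+)$ in any ordering of the three factors, since every such product has a unit in a fixed corner. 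So the real obstacle is not the closedness/ideal property of $\Phi^+\setminus\Phi_L$ that you flag (commutation among positive roots), but the positive/negative mixing, and proving $P\cap K_\cF=(L\cap K_\cF)(U_P^+\cap K_\cF)$ requires an argument that uses $x\in P$ from the start rather than a regrouping of the factorization of $K_\cF$; this is precisely what \cite[\S 6.8]{HV15} supplies. Your third step has the right shape --- identify $L\cap K_\cF$ with the corresponding parahoric of $\Lbf$ and quote there the analogue of $K_\cF=U_\cF^+U_\cF^-N_{1,\cF}=U_\cF^+U_\cF^-U_\cF^+M_1$ --- but as written it rests on the unproved second step and on an unexplained ``piece already accounted for,'' so it inherits the same gap.
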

\begin{proof}
We refer the reader to \cite[Corollary \ref{Iwasawadeccor}]{UoperatorsI2021} for the Iwasawa decomposition and to \cite[\S 6.8]{HV15} for the factorization statement.\end{proof}

\subsection{Some parahoric open compact subgroups}
Let $\cF$ be a facet of $\a$ containing in its closure $a_\circ$, i.e. $I \subset K_\cF \subset K$. 
By \cite[4.6.33, 5.1.32]{BT84}, the parahoric subgroup $K_\cF$ is the inverse image of a standard parabolic subgroup of $\cG_{a_\circ}(\kappa({F}))$. We put ${W}_{\cF}^{\flat}:= N_{1, \cF}^{\flat}/M^{\flat}\simeq {W}_{\cF}$ where $N_{1, \cF}^{\flat}$ denotes the group $N \cap K_\cF^{\flat}= M^{\flat} N_{1,\cF}$. 

\begin{lemma}
Let $\cF \subset \cA$ be a facet containing in its closure a special point $a$, i.e. $ {K}_{\cF} \subset K_a$. 
The $\flat$-parahoric subgroup $K_{\cF}^\flat $ is equal to the subgroup generated by $M^\flat$ and $U_{\cF}$, $U_{\cF}$ is the subgroup of $G$ generated by $\cup_{\alpha \in \Phi_{\red}}U_{\alpha+f_{\cF}(\alpha)}$. We refer the reader to \cite[\S \ref{affinerootssection}]{UoperatorsI2021} for the definition of $f_\cF$. 
More precisely, we have 
$${K}_{\cF}^\flat= K_{\cF} M^\flat= U_{\cF} M^\flat =U_{\cF}^+ U_{\cF}^- U_{\cF}^+ M^\flat=U_{\cF}^+ U_{\cF}^- N_{1, \cF}^{\flat}  \text{ and }K_\cF^\flat/{K}_{\cF}\simeq M^\flat/M_1,$$
such that the factors commute in the right factorization and the product maps 
$$\begin{tikzcd}\prod_{\alpha \in \Phi_{\red}\cap \Phi^\pm} U_{\alpha+f_{{\cF}}(\alpha)}\arrow{r}{\sim}&U_{\cF}^\pm=U_{\cF}\cap \U(F)^\pm=K_{\cF}^\flat\cap \U(F)^\pm.\end{tikzcd}$$
is an homeomorphism, whatever ordering of the factors we take. 
\end{lemma}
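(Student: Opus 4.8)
The plan is to reduce everything to the structure theory of parahoric subgroups established in \cite{UoperatorsI2021} together with the two preceding lemmas (Lemma \ref{UKUKtilde} and Lemma \ref{sameU+}). First I would recall that by \cite[4.6.33, 5.1.32]{BT84} the parahoric $K_\cF$ is the preimage under $K_a \twoheadrightarrow \cG_a(\kappa(F))$ of a parabolic subgroup of the reductive quotient, so the Bruhat decomposition in $\cG_a(\kappa(F))$ lifts to give $K_\cF = U_\cF^+ U_\cF^- N_{1,\cF}$; this is the analogue of the ``geometric'' factorization and is essentially \cite[\S 6.8]{HV15} applied with $\cF$ in place of an alcove. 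Multiplying on the left by $M^\flat$ and using that $M^\flat$ normalizes each $U_\alpha$ (hence each $U_{\alpha+f_\cF(\alpha)}$) and that $M^\flat N_{1,\cF} = N_{1,\cF}^\flat$, one gets $K_\cF^\flat = U_\cF^+ U_\cF^- N_{1,\cF}^\flat = U_\cF^+ U_\cF^- U_\cF^+ M^\flat$. The identity $K_\cF^\flat = K_\cF M^\flat = U_\cF M^\flat$ and the isomorphism $K_\cF^\flat/K_\cF \simeq M^\flat/M_1$ then follow formally, since $K_\cF \cap M^\flat = M_1$ (the parahoric $K_\cF$ meets $M$ exactly in $M_1$).

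Next I would establish the product-map homeomorphism $\prod_{\alpha \in \Phi_{\red}\cap\Phi^\pm} U_{\alpha+f_\cF(\alpha)} \xrightarrow{\sim} U_\cF^\pm$ for an arbitrary ordering of the factors. Here the key input is Lemma \ref{UKUKtilde} applied with $\Psi = \Phi_{\red}^+$ (resp. its opposite), which already gives $U_{\pm\Psi}\cap K_\cF^\flat = \prod^{\prec}_{\alpha\in\pm\Psi} U_{\alpha+f_\cF(\alpha)}$ for any fixed ordering $\prec$, together with the isomorphism \eqref{isoPsi} (which holds for any ordering of the product). Combining these identifies $U_\cF^\pm := U_\cF \cap \U(F)^\pm$ with $K_\cF^\flat \cap \U(F)^\pm$ and with the ordered product, and order-independence is inherited from order-independence in \eqref{isoPsi}. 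That $U_\cF^\pm = U_\cF \cap \U(F)^\pm = K_\cF^\flat \cap \U(F)^\pm$ as sets is then the content of Lemma \ref{sameU+} in its general form (stated there for $\Psi = \Phi_{\red}^+$), extended to the opposite unipotent by symmetry. Finally, to get that $U_\cF M^\flat$ equals the abstract subgroup generated by $M^\flat$ and all the $U_{\alpha+f_\cF(\alpha)}$, I would invoke the parahoric generation statement from \cite{UoperatorsI2021} (the group $U_\cF$ is by definition generated by the $U_{\alpha+f_\cF(\alpha)}$, and the triple-product factorization shows this generated group already equals the set $U_\cF^+ U_\cF^- U_\cF^+$, hence is closed under the group operations).

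I expect the main obstacle to be the \emph{commutation of the factors in the right-hand factorization} $K_\cF^\flat = U_\cF^+ U_\cF^- N_{1,\cF}^\flat$ and the uniqueness/order-independence claims: while \eqref{isoPsi} is order-free, verifying that $U_\cF^+$, $U_\cF^-$ and $N_{1,\cF}^\flat$ can be permuted (in the precise sense meant in the statement, presumably that $K_\cF^\flat$ admits the Iwahori--Bruhat-type decomposition with these three pieces and that the resulting subsets of $G$ do not depend on the order) requires carefully transporting the corresponding statement for the reductive quotient $\cG_a(\kappa(F))$ — where it is the standard fact that a parabolic with Levi containing a maximal torus decomposes as $U^+ U^- (L\cap K)$ — back up to $K_\cF$ via the smooth surjection, and then tensoring with $M^\flat$. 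This is exactly the point where \cite[\S 6.8]{HV15} is cited in Proposition \ref{Iwasawadec}, and the proof here should be a routine adaptation: one reduces mod the pro-unipotent radical, applies the finite-group statement, and lifts. I would therefore structure the write-up so that this reduction is the only genuine computation, with everything else being bookkeeping with Lemmas \ref{UKUKtilde} and \ref{sameU+}.
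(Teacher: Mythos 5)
Your proposal is essentially correct, but it is worth noting that the paper itself does not argue at all: its proof of this lemma is a one-line citation to the companion paper (\cite[Proposition \ref{sectiondecomp} and Lemma \ref{G1fixatorP}]{UoperatorsI2021}), which is precisely where the decompositions $K_\cF = U_\cF^+U_\cF^-N_{1,\cF}$ and their $\flat$-versions are established. What you do differently is reconstruct that content: you re-derive the Iwahori-type factorization of $K_\cF$ from the description of $K_\cF$ as the preimage of a parabolic of $\cG_{a}(\kappa(F))$ (\cite[4.6.33, 5.1.32]{BT84}, \cite[\S 6.8]{HV15}), then obtain the $\flat$-statements by the formal observations that $M^\flat\subset M^1\subset\ker\nu_M$ normalizes each affine root group $U_{\alpha+f_\cF(\alpha)}$ (so conjugation by $M^\flat$ fixes the filtration, by \cite[Lemma \ref{NactionUalphar}]{UoperatorsI2021}), that $M^\flat N_{1,\cF}=N_{1,\cF}^\flat$ by definition, and that $K_\cF\cap M^\flat=M_1$ gives $K_\cF^\flat/K_\cF\simeq M^\flat/M_1$; the order-free product homeomorphism is then correctly extracted from Lemma \ref{UKUKtilde} (with $\Psi=\Phi_{\red}^{\pm}$) and the order-independence of (\ref{isoPsi}). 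This buys a self-contained argument inside the present paper, at the cost of shouldering the one genuinely nontrivial step that you rightly flag: lifting the finite-group decomposition along $K_a\twoheadrightarrow\cG_{a}(\kappa(F))$ is not purely formal, since the kernel (the pro-unipotent radical) needs its own Iwahori factorization, which is again a statement of the same kind; in the literature this is handled by the valued root datum machinery (\cite[6.4.9, 6.4.48]{BT72}, \cite{BT84}), and in effect your write-up would end up re-citing the same sources the companion paper packages. Two smaller bookkeeping points to make explicit if you flesh this out: the fact $K_\cF\cap M=M_1$ (Haines--Rostami type statement) should be cited rather than asserted, and you should note that $M^\flat$ normalizes $K_\cF$ (because $M^1$ fixes $\cF$ pointwise and parahorics are canonically attached to facets) so that $K_\cF M^\flat$ is a group and the second isomorphism theorem applies.
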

\begin{proof}
This is Proposition \ref{sectiondecomp} and Lemma \ref{G1fixatorP} of \cite{UoperatorsI2021}.
\end{proof}

By \cite[XXVI 7.15]{sga3.3}, for any parabolic subgroup $\cP\subset \cG$, the base change $\cP_{\Spec\, \kappa(F)}$ is also a parabolic subgroup of $\cG_{\Spec\, \kappa(F)}$. 
Conversely, any parabolic subgroup $\overline{\cP} \subset \cG_{\Spec\, \kappa(F)}$ there exists a parabolic subgroup $\cP \subset  \cG$ such that $\cP_{\Spec\, \kappa(F)}=\overline{\cP}$. 
In particular, a parabolic subgroup $\cP\subset  \cG$ is minimal if and only if $\cP_{\Spec\, \kappa(F)}\subset \cG_{\Spec\, \kappa(F)}$ is minimal.

For any $J \subset \Delta$, set $\Sbf_J:=(\cap_{\alpha \in J} \ker \alpha)^\circ$. 
We get a parabolic subgroup $\mathbf P_J= \mathbf{L}_J \ltimes \U_J$, 
whose Levi factor is $\mathbf L_J:= Z_\G(\Sbf_J)$ and unipotent radical $\mathbf U_J:=\U_{\Phi^+ \setminus  \Phi_J} $, 
where $\Phi_J=\Phi\cap \Z J$.

Therefore, there exists a standard parabolic subgroup $\P_\cF=\P_{J_\cF}= \Lbf_\cF \rtimes \U_{\Phi_{\red}^+\setminus \Phi_{J_\cF}}$ corresponding to a subset $J_\cF\subset \Delta$ attached to $\cF$ such that 
\begin{equation}\label{parahoricisparabolic}K_\cF = \{ g \in \cG_{a_\circ} (\cO_{F})\colon \red(g  ) \in \P_\cF(\kappa(F))\}, \tag{$\star$}\end{equation}
where $"\red"$ denotes the reduction map $ \cG_{a_\circ} (\cO_{F}) \to \cG_{a_\circ} (\kappa(F))$.

\begin{theorem}\label{decompositionwithLevi} Let $\cF$ be a facet of $\a$ containing $a_\circ$ in its closure. Therefore
\begin{enumerate}[1 -]
\item  $f_\cF(\alpha)=0$ for all $\alpha \in \Phi_{\red}^+  \cup  \Phi_{J_\cF} $, $f_\cF(\alpha)=n_\alpha^{-1}$ for all $\alpha \in \Phi_{\red}^- \setminus \Phi_{J_\alpha}$,
\item $W_\cF\subset W$ is the subgroup generated by $s_\alpha$ for $\alpha \in J_\cF$, thus it equals the Weyl group of $\Lbf_\cF$, in particular $N_{1, \cF}$ centralizes $ \U_{\alpha}$ for all $\alpha \in \Phi_{\red} \setminus \Phi_J$,
\item $L_\cF \cap K_\cF= L_\cF \cap K$, i.e. this is a special maximal parahoric subgroup of $L_\cF$, in particular $P_\cF \cap K_\cF= P_\cF \cap K$.
\item $K_\cF=U_{J_\cF}^+ \cdot (L_\cF \cap K_\cF) \cdot  U_{J_\cF}^-=U_{J_\cF}^-\cdot (L_\cF \cap K_\cF) \cdot  U_{J_\cF}^+,$ 
where, $U_{J_\cF}^\pm:=U_{{  \Phi_{\red}^\pm\setminus \Phi_{J_\cF}}} \cap K_\cF$. 
\end{enumerate}
\end{theorem}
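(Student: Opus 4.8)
The plan is to deduce the four assertions in turn from the parabolic description $(\star)$ of $K_\cF$, Lemmas \ref{UKUKtilde} and \ref{sameU+}, Proposition \ref{Iwasawadec}, and the (unnumbered) factorization $K_\cF^\flat=U_\cF^+U_\cF^-N_{1,\cF}^\flat$ recorded just above, each part feeding the next. For $(1)$ I would intersect $(\star)$ with a reduced root subgroup $\U_\alpha$. If $\alpha\in\Phi_{\red}^+$ or $\alpha\in\Phi_{J_\cF}$, then $\red$ carries $\U_\alpha(F)\cap K$ into the standard Borel (when $\alpha>0$) or into the Levi $\Lbf_\cF$ (when $\alpha\in\Phi_{J_\cF}$), both contained in $\P_\cF(\kappa(F))$; hence $\U_\alpha(F)\cap K_\cF=\U_\alpha(F)\cap K$, which by \cite[Proposition~\ref{propertiesII3.3} \ref{intersectionUpomega} and Example~\ref{exfomega}]{UoperatorsI2021} equals $U_{\alpha+0}$, forcing $f_\cF(\alpha)=0=f_{a_\circ}(\alpha)$. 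If instead $\alpha\in\Phi_{\red}^-\setminus\Phi_{J_\cF}$, then $\red(\U_\alpha)$ meets the standard parabolic $\P_\cF(\kappa(F))$ only in the identity (it lies in the opposite unipotent radical, which intersects $\P_\cF$ trivially), so $\U_\alpha(F)\cap K_\cF=(\U_\alpha(F)\cap K)\cap\ker(\red)$ is the first congruence subgroup of $U_{\alpha+0}$, that is $U_{\alpha+n_\alpha^{-1}}$, whence $f_\cF(\alpha)=n_\alpha^{-1}$.

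For $(2)$: by $(\star)$ the group $N_{1,\cF}=\Nbf(F)\cap K_\cF$ consists of those $n\in\Nbf(F)\cap K$ whose image in the finite Weyl group $W=(\Nbf(F)\cap K)/M_1$ lies in $W_{J_\cF}:=\langle s_\alpha:\alpha\in J_\cF\rangle$ — this being the image of $\overline{\Nbf}\cap\P_\cF(\kappa(F))$, by the description of $\P_\cF$ as the standard parabolic attached to $J_\cF$. Thus $W_\cF=N_{1,\cF}/M_1\simeq W_{J_\cF}$, which is precisely the Weyl group of $\overline{\Lbf}_\cF$ and hence of $\Lbf_\cF$; choosing representatives of $W_{J_\cF}$ inside $\Lbf_\cF(F)\cap\Nbf(F)$ and noting that any element of $N_{1,\cF}$ differs from one of them by an element of $\Mbf(F)\subset\Lbf_\cF(F)$, we also get $N_{1,\cF}\subset\Lbf_\cF(F)=Z_\G(\Sbf_{J_\cF})(F)$, which yields the remaining clause of $(2)$. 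For $(3)$: Part $(1)$ shows $f_\cF$ vanishes on $\Phi_{J_\cF}$ exactly as $f_{a_\circ}$ does, so the generators $M_1$ and $\U_\alpha(F)\cap K$ ($\alpha\in\Phi_{J_\cF}$) of the special maximal parahoric $\Lbf_\cF(F)\cap K$ of $\Lbf_\cF$ all lie in $K_\cF$, whence $\Lbf_\cF(F)\cap K\subseteq K_\cF$ and therefore $\Lbf_\cF(F)\cap K_\cF=\Lbf_\cF(F)\cap K$; since $a_\circ$ is also special for $\Lbf_\cF$ (its affine root directions $\Phi_{J_\cF}$ being a subset of those of $\G$), this is a special maximal parahoric of $\Lbf_\cF$. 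Finally $\P_\cF\cap K_\cF=(\Lbf_\cF(F)\cap K_\cF)(\U_{\Phi_{\red}^+\setminus\Phi_{J_\cF}}(F)\cap K_\cF)=(\Lbf_\cF(F)\cap K)(\U_{\Phi_{\red}^+\setminus\Phi_{J_\cF}}(F)\cap K)=\P_\cF\cap K$ by Proposition \ref{Iwasawadec} together with Lemma \ref{sameU+} (as $\Phi_{\red}^+\setminus\Phi_{J_\cF}$ is closed).

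For $(4)$: by Lemma \ref{UKUKtilde} I may order the products $U_\cF^\pm=\prod_{\alpha\in\Phi_{\red}^\pm}U_{\alpha+f_\cF(\alpha)}$ so that, using the values from $(1)$, $U_\cF^+=U_{J_\cF}^+\cdot A_+$ and $U_\cF^-=A_-\cdot U_{J_\cF}^-$, where $A_\pm:=\U_{\Phi_{\red}^\pm\cap\Phi_{J_\cF}}(F)\cap K$ are the $\pm$-unipotent parts of $\Lbf_\cF(F)\cap K$ and $U_{J_\cF}^-=\prod_{\alpha\in\Phi_{\red}^-\setminus\Phi_{J_\cF}}U_{\alpha+n_\alpha^{-1}}$. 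Substituting into $K_\cF=U_\cF^+U_\cF^-N_{1,\cF}$, pushing $N_{1,\cF}$ — which lies in $\Lbf_\cF(F)$ by $(2)$, hence normalizes $U_{J_\cF}^-$ — to the left past $U_{J_\cF}^-$, and invoking $A_+A_-N_{1,\cF}=\Lbf_\cF(F)\cap K$ (the same factorization applied to $\Lbf_\cF$, its $N$-part being $N_{1,\cF}$ by $(2)$ and $(3)$), I obtain $K_\cF=U_{J_\cF}^+(\Lbf_\cF(F)\cap K)U_{J_\cF}^-=U_{J_\cF}^+(\Lbf_\cF(F)\cap K_\cF)U_{J_\cF}^-$ by $(3)$; the reverse inclusion is clear since $U_{J_\cF}^\pm\subset K_\cF$ (Lemmas \ref{UKUKtilde}, \ref{sameU+}) and $\Lbf_\cF(F)\cap K_\cF\subset K_\cF$. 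The second ordering follows by taking inverses, all four factors being groups. The main obstacle is Part $(1)$: pinning down the exact value $f_\cF(\alpha)=n_\alpha^{-1}$ — rather than merely $f_\cF(\alpha)>0$ — for $\alpha\in\Phi_{\red}^-\setminus\Phi_{J_\cF}$ requires the fine structure of the affine-root filtration of $\U_\alpha(F)\cap K$, namely that its first congruence subgroup is reached in a single jump of size $n_\alpha^{-1}$; once this is in hand, the rearrangements in $(4)$ are routine, resting only on the normality of $\U_{\Phi_{\red}^\pm\setminus\Phi_{J_\cF}}$ in $\U_{\Phi_{\red}^\pm}$ and on $N_{1,\cF}\subset\Lbf_\cF(F)$.
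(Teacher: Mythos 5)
Your proposal is correct in substance and shares the paper's skeleton: everything is extracted from the parabolic description $(\star)$ of $K_\cF$, the factorization $K_\cF=U_\cF^+U_\cF^-N_{1,\cF}$, Lemma \ref{UKUKtilde} and Proposition \ref{Iwasawadec}. Several steps, however, go by a genuinely different route. For (2) you identify $N_{1,\cF}$ by reduction to the residue field, using that $\overline{\Nbf}\cap\P_\cF(\kappa(F))$ surjects onto $W_{J_\cF}$, whereas the paper characterizes $N_{1,\cF}$ inside $N\cap K$ by the invariance $f_\cF = f_\cF\circ w_n$, which forces $w_n(\Phi_{\red}^+\cup\Phi_{J_\cF})=\Phi_{\red}^+\cup\Phi_{J_\cF}$; both roads land on $W_\cF=W_{J_\cF}$ and $N_{1,\cF}\subset L_\cF$. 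For (3) your generator argument ($M_1$ and the groups $U_{\alpha+0}$, $\alpha\in\Phi_{J_\cF}$, lie in $K_\cF$ by (1), hence $L_\cF\cap K\subset K_\cF$) is more direct than the paper's reduction to comparing the $N$-parts via (2). For (4) the paper moves $N_{1,\cF}$ across the unipotent factors by invoking the last clause of (2), while you use only that $N_{1,\cF}$, lying in $L_\cF\cap K_\cF$, normalizes $U_{J_\cF}^\pm$, and you obtain the second ordering for free by taking inverses of the first; this is leaner. Your (1) is a spelled-out version of what the paper extracts from its cited description of $f_\cF$ on faces of $\overline{\a}$, and you correctly flag the only nontrivial input there, namely that the kernel of reduction on $U_{\alpha+0}$ is exactly $U_{\alpha+n_\alpha^{-1}}$ (the first jump of the filtration at the special point $a_\circ$).

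The one step that does not hold as written is your justification of the final clause of (2): from $N_{1,\cF}\subset\Lbf_\cF(F)=Z_\G(\Sbf_{J_\cF})(F)$ one cannot conclude that $N_{1,\cF}$ centralizes each $\U_\alpha$ with $\alpha\in\Phi_{\red}\setminus\Phi_{J_\cF}$; the Levi centralizes the torus $\Sbf_{J_\cF}$ but only normalizes $\U_{\Phi^\pm\setminus\Phi_{J_\cF}}$ (already $M_1$ does not centralize a root group, and for $\G=\GL_3$ with $J_\cF=\{\alpha_1\}$ the reflection $s_{\alpha_1}$ carries $\U_{\alpha_2}$ to $\U_{\alpha_1+\alpha_2}$). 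This clause should be read as a normalization/stabilization statement — the paper's own proof derives it from the similarly overstated claim that $W_\cF$ fixes every root of $\Phi_{\red}\setminus\Phi_{J_\cF}$, while all that is ever used is that $w(\Phi_{\red}^\pm\setminus\Phi_{J_\cF})=\Phi_{\red}^\pm\setminus\Phi_{J_\cF}$ for $w\in W_{J_\cF}$, hence that $N_{1,\cF}$ normalizes $U_{J_\cF}^\pm$. Since that weaker fact is exactly what your part (4) uses and correctly establishes, your argument stands once "centralizes" is replaced by "normalizes" in (2).
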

When $\cF=\a$ then $J_\a=\emptyset$, $\Lbf_\a=\Mbf$ and the above decomposition is the usual one $I= I^+ M_1 I^-$.
\begin{proof}
\begin{enumerate}[1 -]
\item This follows from \cite[Example \ref{exfomega}]{UoperatorsI2021}, since by (\ref{parahoricisparabolic}), we have 
$$U_{\alpha+0}=\U_\alpha(F)\cap K_\cF= \U_\alpha(F)\cap K$$
for all $\alpha \in (\Phi^+\cap \Phi_{\red})  \cup  \Phi_{J_\cF} $.
\item By \cite[Lemma \ref{NactionUalphar}]{UoperatorsI2021}, an element $n\in N \cap K$ lies in $N_{1, \cF}$ if and only if
$f_\cF(\alpha) = f_\cF(w_n(\alpha))$ for all $\alpha \in \Phi_{\red}$, where $w_n$ denotes the image of $n$ in $W$. But since for all $\alpha \in \Phi_{\red}^+\cup \Phi_J$ one has $f_\cF(\alpha) =0$, we deduce that such $n$ lies in $N_{1, \cF}$ if and only if $f_\cF(\alpha) = f_\cF(w_n(\alpha))=0$ for all $\alpha \in \Phi_{\red}^+\cup \Phi_J$ or equivalently $w_n(\Phi_{\red}^+\cup \Phi_J)=\Phi_{\red}^+\cup \Phi_J$.

Therefore, $W_\cF\subset W$ is the subgroup generated by $s_\alpha$ for $\alpha \in \Phi_{\red}^+$ such that $f_\cF(-\alpha)=0$, i.e. $\alpha \in J_\cF$. 
This implies that $W_\cF$ identifies with the Weyl group $W_L$ of $\Lbf_\cF$ and consequently $$N_{1, \cF} \subset  L_\cF \text{ and }   N_{1, \cF}/M_1 \simeq  N_{ L_\cF}(S)/M=W_L .$$
In particular, $W_\cF$ fixes all $\Phi_{\red} \setminus \Phi_J$ hence $N_{1,\cF}$ centralizes each $\U_\alpha$ for $\alpha \in \Phi_{\red} \setminus \Phi_J$.
\item As observed in (1), we have  by definition $\U_\alpha(F)\cap K_\cF= \U_\alpha(F)\cap K$  for all $\Phi_{\red}^+   \cup  \Phi_{J_\cF, \text{red}}$. Therefore, using Proposition \ref{Iwasawadec} for $\cF$ and $a_\circ$ and Lemma \ref{BKI} for $\Psi= \Phi_{\red}^+\setminus \Phi_{J_\cF}$, it remains to shows that $N_{1, \cF} \cap \Lbf_\cF= N \cap K \cap \Lbf_\cF$. But since one has an inclusion $N_{1, \cF} \cap \Lbf_\cF /M_1\subset  N \cap K \cap \Lbf_\cF M_1 \subset W_L$ the equality follows from (2).
\item By \cite[Proposition \ref{decomppara}]{UoperatorsI2021} we know that $K_\cF=U_\cF^+ U_\cF^- N_{1, \cF}$. We have $\U^\pm =\U_{\Lbf_\cF} \U_{\Phi^\pm \setminus  \Phi_{J_\cF}}$ and accordingly
$$K_\cF=U_\cF^+ U_\cF^- N_{1, \cF}= (U_{{  \Phi_{\red}^+\setminus \Phi_{J_\cF}}} \cap K_\cF)  (L_\cF \cap U_\cF^+) (U_\cF^-\cap L_\cF) (U_{{  \Phi_{\red}^-\setminus \Phi_{J_\cF}}} \cap K_\cF) N_{1, \cF}.$$
By (2), $N_{1, \cF}$ centralizes $(U_{{  \Phi^-\setminus \Phi_{J_\cF}}} \cap K_\cF) $ and $(U_{ {  \Phi^+\setminus \Phi_{J_\cF}}} \cap K_\cF) $, hence using Proposition \ref{Iwasawadec} one obtains
$$K_\cF = (U_{{  \Phi_{\red}^+\setminus \Phi_{J_\cF}}} \cap K_\cF)  (L_\cF \cap K_\cF) (U_{{  \Phi_{\red}^-\setminus \Phi_{J_\cF}}} \cap K_\cF) = (U_{{  \Phi_{\red}^-\setminus \Phi_{J_\cF}}} \cap K_\cF)  (L_\cF \cap K_\cF) (U_{ {  \Phi_{\red}^+\setminus \Phi_{J_\cF}}} \cap K_\cF). \qedhere $$ 
\end{enumerate}
\end{proof}
\begin{lemma}\label{bijection42}
For any $J\subset \Delta$ and any $m \in M_J^-$, the inclusion $U_J^+ \cap {K} \hra I^+=U^+ \cap {K}$, induces the bijection
$$\begin{tikzcd}U_{J}^+ \cap {K}_\cF^{\flat}/ U_{J}^+\cap m{K}_\cF^{\flat} m^{-1} \arrow{r}{\simeq}&I^+/mI^+m^{-1}.\end{tikzcd}$$
\end{lemma}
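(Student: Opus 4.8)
The plan is to first discard the facet $\cF$ and the superscript $\flat$, so that both coset spaces are built only from $I^+=U^+\cap K$ and the subgroup $U_J^+\cap K$, and then to read off the bijection from the Iwahori factorization of $I^+$ along the standard parabolic $\Pbf_J=\Lbf_J\ltimes\U_J$.

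First I would observe that $\cF$ and $\flat$ play no role here. Since $U_J^+=\U_{\Phi^+\setminus\Phi_J}(F)$ equals $U_\Psi$ for the closed set $\Psi:=(\Phi^+\setminus\Phi_J)\cap\Phi_{\red}\subseteq\Phi_{\red}^+$, Lemma~\ref{BKI} gives $U_J^+\cap K_\cF^\flat=U_J^+\cap K=:X\subseteq I^+$. As $m\in M\subseteq\Lbf_J(F)$ normalizes $\U_J$ and every root subgroup, $U_J^+\cap mK_\cF^\flat m^{-1}=m(U_J^+\cap K_\cF^\flat)m^{-1}=mXm^{-1}$, and $mXm^{-1}\subseteq X$ by the contracting property defining $M_J^-$ (recalled in \cite{UoperatorsI2021}). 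Hence the left-hand side is the coset space $X/mXm^{-1}$, and the map under consideration is the one induced by the inclusion $X\hookrightarrow I^+$, which is legitimate because $mXm^{-1}\subseteq mI^+m^{-1}$.

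Next I would establish the factorization $I^+=X\cdot(L_J\cap I^+)$ with $X\cap(L_J\cap I^+)=1$, where $L_J:=\Lbf_J(F)$. By Theorem~\ref{decompositionwithLevi}(1) one has $f_\cF(\alpha)=0$ for all $\alpha\in\Phi_{\red}^+$, so Lemma~\ref{UKUKtilde}, applied to $\Phi_{\red}^+$ and to its two closed pieces $\Psi$ and $\Phi_J^+\cap\Phi_{\red}$, writes $I^+=U^+\cap K$ as the product $\prod_{\alpha\in\Phi_{\red}^+}(U_\alpha\cap K)$ taken in any order; ordering the roots of $\Phi^+\setminus\Phi_J$ first identifies the initial block with $U_\Psi\cap K=X$ and the remaining block with $U_{\Phi_J^+}\cap K=L_J\cap I^+$. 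The intersection of the two factors is trivial since $X\subseteq U_J^+$ meets $L_J$ in the identity. Because $m$ moreover normalizes $L_J\cap I^+$ (again by the definition of $M_J^-$), the factorization yields $mI^+m^{-1}=mXm^{-1}\cdot m(L_J\cap I^+)m^{-1}\subseteq X\cdot(L_J\cap I^+)=I^+$, so the right-hand coset space is well posed too.

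Finally I would check bijectivity. Surjectivity is immediate: $I^+=X\cdot(L_J\cap I^+)=X\cdot m(L_J\cap I^+)m^{-1}\subseteq X\cdot mI^+m^{-1}$, so every coset of $mI^+m^{-1}$ in $I^+$ meets $X$. For injectivity, if $x,x'\in X$ satisfy $x^{-1}x'\in mI^+m^{-1}$, then $m^{-1}(x^{-1}x')m$ lies in $I^+$ and, since $m$ normalizes $U_J^+$, also in $U_J^+$, hence in $U_J^+\cap I^+=X$; thus $x^{-1}x'\in mXm^{-1}$, i.e. $x,x'$ represent the same coset of $mXm^{-1}$ in $X$. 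I expect the only genuinely delicate point to be the bookkeeping around the monoid $M_J^-$ --- extracting from its definition that $m$ both contracts $U_J^+\cap K$ into itself and normalizes the Levi part $L_J\cap I^+$ of the Iwahori subgroup; granting this, the remainder is formal manipulation of the Iwahori factorization along $\Pbf_J$.
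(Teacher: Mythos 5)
Your argument is correct and follows essentially the same route as the paper's proof: both rest on the factorization of $I^+=U^+\cap K_\cF^\flat$ into the block $U_J^+\cap K$ and the Levi block $U^+\cap L_J\cap K$ (via Lemma \ref{UKUKtilde} and Theorem \ref{decompositionwithLevi}), the vanishing of $\langle\alpha,\nu(m)\rangle$ on $\Phi_J$ so that $m$ fixes the Levi block while contracting the $U_J^+$-block, and Lemma \ref{BKI} to remove $\cF$ and $\flat$. Your explicit surjectivity/injectivity check (using $U_J^+\cap I^+=U_J^+\cap K$) merely spells out what the paper's "Therefore" leaves implicit, so there is nothing substantively different to report.
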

For any $J\subset \Delta$ and any $m \in M_J^-$, one has $U_{J}^+\cap K= \prod_{\alpha \in \Phi^+ \setminus \Phi_J}^{\prec}U_{\alpha+0}$ for any fixed ordering $ \prec$ of $\Phi^+ \setminus \Phi_J$.
\begin{proof}
We have $\U^+= \U_{J}^+ (\U^+\cap \mathbf{L}_{J})$, so by Lemma \ref{UKUKtilde} we get a decomposition
$$U^+\cap K_\cF^{\flat}=(U^+ \cap L_{J} \cap K_\cF)(U_{J}^+\cap K_\cF)=(U_{J}^+\cap K_\cF)(U^+ \cap L_{J} \cap K_\cF).$$
By \cite[Lemma \ref{NactionUalphar}]{UoperatorsI2021}, we have  $m U_{\alpha+0}m^{-1}= U_{\alpha- \langle \alpha , \nu(m)\rangle}$, for all $\alpha \in \Phi_{\red}$. 
But since $\langle \alpha , \nu(m)\rangle=0$ for all $\alpha \in \Phi(\mathbf{L}_{J },\Sbf)$, 
we see that $U^+\cap mK_\cF^{\flat}m^{-1}=(U_{J}^+\cap mK_\cF m^{-1})(U^+ \cap L_{J} \cap K_\cF)$. 
Therefore, for any $m\in M_\cF^-$, we have
$$U^+\cap K_\cF^{\flat}/U^+\cap mK_\cF^{\flat}m^{-1}\simeq U_{J}^+\cap K_\cF/ U_{J}^+ \cap m K_\cF m^{-1}$$
Which shows the lemma since by Lemma \ref{BKI}
$U^+\cap K_\cF^{\flat}=U_\cF^+= I^+$ and $U^+\cap mK_\cF^{\flat}m^{-1}=mI^+m^{-1}$.
\end{proof}

\subsection{Hecke algebras}
Let $H$ be any open compact subgroup of $G$. 
For any commutative ring $A$, we define $\cC_c(G/H,A)$ to be the $A$-module of compactly supported functions $f \colon G \to A$ that are $H$-invariant on the right. 
It has the following canonical basis $\{{\bf 1}_{gH} \colon g \in G/H\}$. 

We also define $\cH_c(G\sslash H,A)\subset \cC_c(G/H,A)$ to be the convolution $A$-algebra of functions $f \colon G \to A$, that are also $H$-invariant on the left. 
The convolution is defined with respect to the Haar measure giving $H$ volume $1$ and will be denoted $*_H$.
We call $\cH_c(G\sslash H,A)$ the Hecke algebra relative to $H$ with values in $A$ and denote $\cH_H(A)$. 
We have $\cH_H(A)=\cH_H(\Z)\otimes_{\Z} A$. 

\begin{example}
When $\cF=\{a_\circ\}$, $\cH_{K^{\flat}}(\Z)$ will be called the ${\flat}$-special--Hecke algebra. 
One can exhibit a natural $\Z$-basis for $\cH_{K^{\flat}}(\Z)$ as follows $\{{\bf 1}_{K^{\flat} mK^{\flat}} \text{ for } m \in \Lambda_M^{-,{\flat}}\}
$. 
When $\cF=\a$, $\cH_{I^{\flat}}(\Z)$ is called the ${\flat}$-Iwahori--Hecke algebra and the following set forms a $\Z$-basis $\{{\bf 1}_{I^{\flat}wI^{\flat}} \text{ for } w\in N/M^\flat \}.
$
\end{example}
\subsection{Parabolic partition of $\Lambda_M^-$}
We partition $\Lambda_M^-$ into facets corresponding to parabolic subgroups containing $\B$. These parabolics are classified by arbitrary subsets $J\subset \Delta$. 
Define  
$M_\cF^-:=\{m \in M \colon \langle\nu_M(m), \alpha \rangle \ge 0, \forall \alpha \in \Phi^- \cup \Phi_{J_\cF}\}$ and $\Lambda_{M,J_\cF}^- \subset \Lambda_M^-$ its image in $\Lambda_M$. 
When $\cF=\a$, i.e. $J_\cF=\emptyset$, we get $\mathbf P_\emptyset = \B$ and $M_\emptyset^-=M^-$.
We obtain a partition
$${\Lambda}_M^-= \bigcup_{J_\cF \subset \Delta} \Lambda_{M,J_\cF}^-.$$
For any $n\in N$, set $h_{n,\cF}^\flat:={\bf 1}_{K_\cF n K_\cF}$, when $\cF=\a$, $i_{n}^\flat:= h_{n,\cF}^\flat$ and put $\mathcal{R}_{\cF}^{-,\flat}$ for the ring $\Z[\Lambda_{M,J_\cF}^{-,\flat}]$.
\begin{lemma}\label{facetaddit}
The map of $\Z$-modules $$\jmath_\cF^\flat \colon \mathcal{R}_{\cF}^{-,\flat} \hra \cH_{K_\cF^\flat}(\Z), \quad m \mapsto h_{m,\cF}^\flat$$
is an embedding of rings. 
\end{lemma}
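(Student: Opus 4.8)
The statement to prove is that $\jmath_\cF^\flat\colon \mathcal{R}_\cF^{-,\flat}\to\cH_{K_\cF^\flat}(\Z)$, $m\mapsto h_{m,\cF}^\flat={\bf 1}_{K_\cF^\flat m K_\cF^\flat}$, is an injective ring homomorphism. Injectivity is the easy half: the double cosets $K_\cF^\flat m K_\cF^\flat$ for $m$ ranging over $\Lambda_{M,J_\cF}^{-,\flat}$ are pairwise distinct (this is a Cartan-type decomposition for the parahoric $K_\cF^\flat$, restricted to the antidominant cone relative to $J_\cF$, and follows from the general theory recalled in the earlier sections, e.g. via $\nu_M$ and the fact that $M^\flat$ is open compact), so the images $h_{m,\cF}^\flat$ are linearly independent over $\Z$. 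Hence $\jmath_\cF^\flat$ is an injection of $\Z$-modules onto a free $\Z$-submodule with the given basis.

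The substance is that $\jmath_\cF^\flat$ is multiplicative, i.e. for $m, m' \in M_\cF^-$ one has $h_{m,\cF}^\flat *_{K_\cF^\flat} h_{m',\cF}^\flat = h_{mm',\cF}^\flat$. Concretely this amounts to the assertion that the support of the convolution is the single double coset $K_\cF^\flat m m' K_\cF^\flat$ and that the structure constant there is $1$; equivalently, that the natural multiplication map
$$
(K_\cF^\flat m K_\cF^\flat/K_\cF^\flat) \times (K_\cF^\flat m' K_\cF^\flat/K_\cF^\flat) \longrightarrow K_\cF^\flat m m' K_\cF^\flat/K_\cF^\flat
$$
is a bijection. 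First I would reduce, using the decomposition $K_\cF = U_{J_\cF}^+\cdot(L_\cF\cap K_\cF)\cdot U_{J_\cF}^-$ from Theorem \ref{decompositionwithLevi}(4) and the Iwasawa-type factorizations, to controlling $K_\cF^\flat m K_\cF^\flat$ as a union of left cosets; the key input is that for $m\in M_\cF^-$ one has $m U_{J_\cF}^+ m^{-1}\subset U_{J_\cF}^+$ and $m^{-1}U_{J_\cF}^- m\subset U_{J_\cF}^-$ (antidominance with respect to $\Phi^+\setminus\Phi_{J_\cF}$), together with the fact that $N_{1,\cF}^\flat$ normalizes things appropriately and $m$ normalizes $M^\flat$ modulo $M_1$. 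This lets me write $K_\cF^\flat m K_\cF^\flat = K_\cF^\flat m (U_{J_\cF}^+\cap m^{-1}K_\cF^\flat m \backslash U_{J_\cF}^+)$ or the analogous statement, so that the left cosets in $K_\cF^\flat m K_\cF^\flat/K_\cF^\flat$ are indexed by $U_{J_\cF}^+/(U_{J_\cF}^+\cap m K_\cF^\flat m^{-1})$, which by Lemma \ref{bijection42} is identified with $I^+/mI^+m^{-1}$.

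Granting this, the multiplicativity becomes the cocycle-type identity $[I^+ : m m' I^+ (mm')^{-1}] = [I^+ : m I^+ m^{-1}]\cdot[I^+ : m' I^+ m'^{-1}]$ for $m, m'\in M_\cF^-$, combined with a transversality statement showing the product map above has trivial fibers. The index identity follows from $mm' I^+ (mm')^{-1} \subset m' I^+ m'^{-1} \subset I^+$ (using antidominance twice) and multiplicativity of indices of nested open subgroups; the fiber-triviality follows by choosing compatible coset representatives inside $U_{J_\cF}^+$ and using that $m U_{J_\cF}^+ m^{-1}\subset U_{J_\cF}^+$, so that distinct representatives stay distinct after multiplying by $m$. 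Finally one checks $K_\cF^\flat m m' K_\cF^\flat$ is genuinely a single double coset containing the product — this is where one uses $mm'\in M_\cF^-$ again (the cone $M_\cF^-$ is a submonoid of $M$) and the Cartan decomposition. \textbf{The main obstacle} will be the bookkeeping in the reduction step: pinning down exactly which unipotent pieces survive in $K_\cF^\flat m K_\cF^\flat$ and verifying the transversality of the product map, since $K_\cF^\flat$ is only a parahoric (not a maximal parahoric) and the relevant unipotent subgroup is $U_{J_\cF}^\pm$ rather than the full $U^\pm$; this is precisely what Lemma \ref{bijection42} was set up to handle, so the proof should be a relatively short assembly of Theorem \ref{decompositionwithLevi}, Lemma \ref{bijection42}, and the monoid property of $M_\cF^-$ — with the noncommutativity of $N$ and the parahoric playing no role once everything is phrased via the $U_{J_\cF}^+$-indexing.
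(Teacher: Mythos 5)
Your proposal is correct, and its two pillars are the same as the paper's: the factorization $K_\cF=U_{J_\cF}^+\cdot(L_\cF\cap K_\cF)\cdot U_{J_\cF}^-$ of Theorem \ref{decompositionwithLevi}(4) and the contraction properties $mU_{J_\cF}^+m^{-1}\subset U_{J_\cF}^+$, $m^{-1}U_{J_\cF}^-m\subset U_{J_\cF}^-$ for $m\in M_\cF^-$ (plus, implicitly in both arguments, that $m$ normalizes $L_\cF\cap K_\cF^\flat$, which follows from $\langle\alpha,\nu_M(m)\rangle=0$ for $\alpha\in\Phi_{J_\cF}$ — your phrase ``normalizes things appropriately'' is glossing exactly this). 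Where you genuinely diverge is in how the structure constant is pinned down: the paper, after identifying the support with the single double coset $K_\cF^\flat(m+m')K_\cF^\flat$, evaluates the coefficient as the normalized volume of $m^{-1}K_\cF^\flat mK_\cF^\flat\cap m'K_\cF^\flat m'^{-1}K_\cF^\flat$ and shows this intersection is just $K_\cF^\flat$, whereas you decompose $K_\cF^\flat mK_\cF^\flat=U_{J_\cF}^+mK_\cF^\flat$ into left cosets indexed by $U_{J_\cF}^+/\bigl(U_{J_\cF}^+\cap mK_\cF^\flat m^{-1}\bigr)\cong I^+/mI^+m^{-1}$ (Lemma \ref{bijection42}) and check that products of representative sets give a representative set for $I^+/mm'I^+(mm')^{-1}$. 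Your route is precisely the computation the paper carries out later, in the proofs of Lemma \ref{Uaction} and Lemma \ref{additivitysuccesors}, so it costs a bit more bookkeeping here but yields the explicit coset formula (and hence the action formula of Lemma \ref{Uaction}) essentially for free; the paper's volume-of-intersection argument is shorter at this point but defers that bookkeeping. Two small cautions: your displayed ``multiplication map'' on double-coset quotients is not well defined on cosets as written and must be read via chosen representatives (which your later steps do), and your chain $mm'I^+(mm')^{-1}\subset m'I^+m'^{-1}$ tacitly uses that $m,m'$ commute modulo $M_1$ and that $M_1$ normalizes $I^+$ — both true, but worth saying. Injectivity is handled at the same level of detail as in the paper (asserted via the distinctness of the double cosets), so no discrepancy there.
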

\begin{proof}
We need to show that for any $m,m' \in \Lambda_{M,J_\cF}^{-,\flat}$, we have
$$h_{m,\cF}^\flat*_{K_\cF^\flat}h_{m',\cF}^\flat=h_{m+m',\cF}^\flat.$$
The function $h_{m,\cF}^\flat*_{K_\cF^\flat}h_{m',\cF}^\flat$ is supported on $K_\cF^\flat mK_\cF^\flat m' K_\cF^\flat$, which is equal by 4 - Theorem \ref{decompositionwithLevi} to 
$K_\cF^\flat m U_{J_\cF}^+  (L_\cF \cap K_\cF)   U_{J_\cF}^- m'K_\cF^\flat$. 
By \cite[Lemma \ref{normalizingI}]{UoperatorsI2021}, since $m\in M_\cF^-$, we have $mU_{J_\cF}^+m^{-1} \subset U_{J_\cF}^+ $ and $m^{-1}U_{J_\cF}^-m\subset U_{J_\cF}^-$. 
Therefore, 
$$h_{m,\cF}^\flat*_{K_\cF^\flat}h_{m',\cF}^\flat = s h_{m+m',\cF}^\flat, \text{ for }s=\big|m^{-1}K_\cF^\flat mK_\cF^\flat \cap m'K_\cF^\flat m'^{-1}K_\cF^\flat \big|_{K_\cF^\flat}.$$
Combining 4 - Theorem \ref{decompositionwithLevi} and  \cite[Lemma \ref{normalizingI}]{UoperatorsI2021} again, one shows that
$$m^{-1}K_\cF^\flat mK_\cF^\flat \cap m'K_\cF^\flat m'^{-1}K_\cF^\flat= (m^{-1}U_{J_\cF}^+ m) (L_\cF \cap K_\cF^\flat)U_{J_\cF}^- \cap  (m'U_{J_\cF}^- m'^{-1})(L_\cF \cap K_\cF^\flat) U_{J_\cF}^+=K_\cF^\flat.$$
This concludes the proof of the lemma.
\end{proof}

\subsection{Section}
The inclusion $K_\cF^{\flat}\subset K^{\flat}$ induces a surjective map
$$\begin{tikzcd}
\cC_c(G/K_\cF^{\flat},\Z)\arrow[r, twoheadrightarrow]&\cC_c(G/K^{\flat},\Z), &f \mapsto f*_{K_\cF^{\flat}}{\bf{1}}_{K^{\flat}}.
\end{tikzcd}$$
The same inclusion induces also natural embeddings
$$\begin{tikzcd}
\cC_c(G/K^{\flat},\Z)\arrow[r, hook]&\cC_c(G/K_\cF^{\flat},\Z), &\cH_{K^{\flat}}(\Z)\arrow[r, hook]&\cH_{K_\cF^{\flat}}(\Z).
\end{tikzcd}$$
The composition of the above maps
$$\begin{tikzcd}
\cC_c(G/K^{\flat},\Z)\arrow[r, hook]&\cC_c(G/K_\cF^{\flat},\Z)\arrow[r, twoheadrightarrow]&\cC_c(G/K^{\flat},\Z),
\end{tikzcd}$$
is multiplication by $[K^{\flat}:K_\cF^{\flat}]=[K:K_\cF]$.
We fix a section of the natural projection $\pi_\cF\colon G/K_\cF^{\flat}\surjto G/K^{\flat}$, as follows:
 $$\begin{tikzcd}s_\cF\colon G/K^{\flat} \arrow{r}{\simeq}& P_\cF/(P_\cF\cap K^{\flat})\arrow[r, hook] &G/K_\cF^{\flat},\end{tikzcd}$$
the bijectivity of the first map follows from the Iwasawa decomposition $G=P_\cF K^{\flat}$ \cite[{Proposition \ref{Iwasawadec}}]{UoperatorsI2021}, 
while the injectivity of the second map follows from 
3 - Theorem \ref{decompositionwithLevi}.
The above map induces a map $\begin{tikzcd}\cC_c(G/K^{\flat},\Z)\arrow{r}&\cC_c(G/K_\cF^{\flat},\Z)
\end{tikzcd}$, also denoted by $s_\cF$ 
defined on the basis functions by ${\bf 1}_{bK^{\flat}}\mapsto {\bf 1}_{bK_\cF^{\flat}}$, for all $b\in P_\cF$. 
The map $s_\cF$, is actually a section of $-*_{K_\cF^{\flat}} {\bf 1}_{K^{\flat}}\colon  \cC_c(G/K_\cF^{\flat},\Z) \twoheadrightarrow  \cC_c(G/K^{\flat},\Z)$.
\begin{remark}\label{functo1}
We have a commutative diagram
$$\begin{tikzcd} G/K^{\flat} \arrow[equal]{d}\arrow{r}{\cong}& P_\cF/(P_\cF\cap K^{\flat})\arrow[r, hook] &G/K_\cF^{\flat}\\
 G/K^{\flat} \arrow{r}{\simeq}& B/(B\cap K^{\flat})\arrow{u}{\cong}\arrow[r, hook] &G/I^{\flat}\arrow[two heads]{u}
 \end{tikzcd}$$
So we have 
 $s_\cF =(- *_{I^{\flat}} {\bf 1}_{K_\cF^{\flat}})\circ s_\a $. \end{remark}
 \begin{remark}
On the level of the extended building, the injection $G/K^1 \hookrightarrow G/I^1$ induces a $B$-equivariant embedding of the $G$-orbit of $(a_{\circ}, 0_{V_G})$ into the $G$-orbit of $\a \times 0_{V_G}$.
\end{remark}

\subsection{A \texorpdfstring{$\mathcal{R}_{\cF}^{-,\flat}$}{R}-action on $\Z[G/K]$}\label{pairingsection}
We define an "excursion pairing"
$$\begin{tikzcd}
\cC_c(G/K^{\flat},\Z)\times\cH_{K_\cF^{\flat}}(\Z) \arrow{r}&\cC_c(G/K^{\flat},\Z);& (x,f)\arrow[mapsto]{r}&x\bullet_\cF f:= \left(s_\cF(x)*_{K_\cF^{\flat}} f\right)*_{K_\cF^{\flat}} {\bf 1}_{K^{\flat}}.
\end{tikzcd}$$
This is clearly bilinear in both variables. 

\begin{lemma}[{$\mathcal{R}_{\cF}^{-,\flat}$ action on $\Z[G/K^{\flat}]$}]\label{Uaction}
The "excursion pairing" when restricted to $\jmath_\cF^{\flat}(\mathcal{R}_{\cF}^{-,\flat})$ defines a right action
$$\begin{tikzcd}[column sep= tiny]
\cC_c(G/K^{\flat},\Z)&\times_{\jmath_\cF^\flat}&\mathcal{R}_{\cF}^{-,\flat}\arrow{rrrrr}&&&&&\cC_c(G/K^{\flat},\Z).
\end{tikzcd}$$
\end{lemma}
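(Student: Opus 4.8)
The plan is to verify the two axioms of a right $\mathcal{R}_{\cF}^{-,\flat}$-module on $\cC_c(G/K^{\flat},\Z)$: unitality, $x\bullet_\cF h_{0,\cF}^\flat=x$, and associativity, $(x\bullet_\cF h_{m,\cF}^\flat)\bullet_\cF h_{m',\cF}^\flat=x\bullet_\cF\big(h_{m,\cF}^\flat*_{K_\cF^\flat}h_{m',\cF}^\flat\big)=x\bullet_\cF h_{m+m',\cF}^\flat$ for $m,m'\in\Lambda_{M,J_\cF}^{-,\flat}$, the last equality being Lemma~\ref{facetaddit}. Since $\bullet_\cF$ is $\Z$-bilinear, it is enough to treat basis elements $x={\bf 1}_{bK^\flat}$ and the generators $h_{m,\cF}^\flat,h_{m',\cF}^\flat$. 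Unitality is immediate: $h_{0,\cF}^\flat={\bf 1}_{K_\cF^\flat}$ is the unit of $\cH_{K_\cF^\flat}(\Z)$, so $s_\cF(x)*_{K_\cF^\flat}{\bf 1}_{K_\cF^\flat}=s_\cF(x)$, and since $s_\cF$ is a section of $-*_{K_\cF^\flat}{\bf 1}_{K^\flat}$ we get $x\bullet_\cF{\bf 1}_{K_\cF^\flat}=x$.

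The heart of the argument is an explicit formula for $x\bullet_\cF h_{m,\cF}^\flat$. Fix a representative $m\in M_\cF^-$ of the given class, and use the Iwasawa decomposition $G=P_\cF K^\flat$ (Proposition~\ref{Iwasawadec}) to write $x={\bf 1}_{bK^\flat}$ with $b\in P_\cF$, so that $s_\cF(x)={\bf 1}_{bK_\cF^\flat}$. The first point is the factorization $K_\cF^\flat mK_\cF^\flat=U_{J_\cF}^+\,m\,K_\cF^\flat$: starting from $K_\cF^\flat=U_{J_\cF}^+\,(L_\cF\cap K_\cF^\flat)\,U_{J_\cF}^-$ (the $\flat$-form of item~4 of Theorem~\ref{decompositionwithLevi}), one absorbs the $U_{J_\cF}^-$ factor into $K_\cF^\flat$ on the right using $m^{-1}U_{J_\cF}^-m\subseteq U_{J_\cF}^-$, then moves $m$ past $L_\cF\cap K_\cF^\flat$ using that $m$ normalizes $L_\cF\cap K_\cF^\flat$ — this holds because $M$ is abelian and $\langle\nu_M(m),\alpha\rangle=0$ for all $\alpha\in\Phi_{J_\cF}$ (apply the defining inequalities of $M_\cF^-$ to $\alpha$ and $-\alpha$), so $m$ acts trivially on the reduced building of $L_\cF$. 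Combined with $mU_{J_\cF}^+m^{-1}\subseteq U_{J_\cF}^+$ and the uniqueness in the open-cell decomposition $\mathbf{U}_{J_\cF}\times\mathbf{L}_{J_\cF}\times\mathbf{U}_{J_\cF}^-$, this yields $U_{J_\cF}^+\cap mK_\cF^\flat m^{-1}=mU_{J_\cF}^+m^{-1}$, hence $K_\cF^\flat mK_\cF^\flat/K_\cF^\flat$ is represented by $\{um:u\in U_{J_\cF}^+/mU_{J_\cF}^+m^{-1}\}$ (whose cardinality is that of $I^+/mI^+m^{-1}$ by Lemma~\ref{bijection42}). Unwinding the two convolutions — with $-*_{K_\cF^\flat}{\bf 1}_{K^\flat}$ simply replacing each $K_\cF^\flat$-coset by its $K^\flat$-coset, keeping track of the collisions this may create — gives $x\bullet_\cF h_{m,\cF}^\flat=\sum_{u}{\bf 1}_{bumK^\flat}$, the sum over representatives $u$ of $U_{J_\cF}^+/mU_{J_\cF}^+m^{-1}$; crucially, each $bum$ again lies in $P_\cF$ (as $U_{J_\cF}^+,M\subseteq P_\cF$), so the formula can be iterated.

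Applying the formula twice gives $(x\bullet_\cF h_{m,\cF}^\flat)\bullet_\cF h_{m',\cF}^\flat=\sum_{u}\sum_{u'}{\bf 1}_{bumu'm'K^\flat}$, with $u,u'$ running over representatives of $U_{J_\cF}^+/mU_{J_\cF}^+m^{-1}$ and $U_{J_\cF}^+/m'U_{J_\cF}^+m'^{-1}$. Writing $umu'm'=(u\cdot mu'm^{-1})(mm')$ with $u\cdot mu'm^{-1}\in U_{J_\cF}^+$, it remains to observe that the map $(u,u')\mapsto u\cdot mu'm^{-1}$ carries such a pair of representative systems onto a full system of representatives for $U_{J_\cF}^+/(mm')U_{J_\cF}^+(mm')^{-1}$. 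This is the elementary fact about the tower $U_{J_\cF}^+\supseteq mU_{J_\cF}^+m^{-1}\supseteq(mm')U_{J_\cF}^+(mm')^{-1}$ (the second inclusion because $m'U_{J_\cF}^+m'^{-1}\subseteq U_{J_\cF}^+$, conjugated by $m$): representatives of the top quotient times representatives of the bottom one represent the composite, and $\{mu'm^{-1}\}$ represents the middle quotient $mU_{J_\cF}^+m^{-1}/(mm')U_{J_\cF}^+(mm')^{-1}$. Therefore $(x\bullet_\cF h_{m,\cF}^\flat)\bullet_\cF h_{m',\cF}^\flat=\sum_{u''}{\bf 1}_{bu''mm'K^\flat}=x\bullet_\cF h_{m+m',\cF}^\flat=x\bullet_\cF\big(h_{m,\cF}^\flat*_{K_\cF^\flat}h_{m',\cF}^\flat\big)$ by Lemma~\ref{facetaddit}, which together with unitality and bilinearity finishes the proof.

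The main obstacle is the explicit formula of the second paragraph: one must pin down the double-coset representatives $U_{J_\cF}^+/mU_{J_\cF}^+m^{-1}$ via the Iwahori-type factorization of $K_\cF^\flat$ and the contraction properties of $m\in M_\cF^-$, check that these representatives already lie in $P_\cF$ so that $s_\cF$ behaves compatibly under a second application of $\bullet_\cF$, and carry along the multiplicities introduced when passing from $K_\cF^\flat$-cosets to $K^\flat$-cosets. Once this bookkeeping is in place, associativity reduces to the purely group-theoretic "associated graded" statement about the descending tower of $U_{J_\cF}^+$'s, of which Lemma~\ref{bijection42} is the relevant incarnation; the apparent non-commutativity of $\bullet_\cF$ disappears precisely because the $U_{J_\cF}^+$-cosets are recombined before the collapse to $G/K^\flat$.
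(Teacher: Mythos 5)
Your proof is correct and takes essentially the same route as the paper: you derive the same key formula $x\bullet_\cF h_{m,\cF}^\flat=\sum_{u\in U_{J_\cF}^+/mU_{J_\cF}^+m^{-1}}{\bf 1}_{bumK^{\flat}}$ from $K_\cF^\flat mK_\cF^\flat=U_{J_\cF}^+mK_\cF^\flat$ and the contraction properties of $m\in M_\cF^-$, note that the new representatives stay in $P_\cF$, and then obtain associativity via Lemma \ref{facetaddit} --- the paper folds the iteration back into the convolution ${\bf 1}_{pK_\cF^{\flat}}*_{K_\cF^{\flat}}h_{m,\cF}^{\flat}*_{K_\cF^{\flat}}h_{m',\cF}^{\flat}$, while you recombine coset representatives for the tower $U_{J_\cF}^+\supseteq mU_{J_\cF}^+m^{-1}\supseteq mm'U_{J_\cF}^+(mm')^{-1}$ by hand, which is the same computation. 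One small inaccuracy: the fact that $m$ normalizes $L_\cF\cap K_\cF^\flat$ should not be justified by ``$M$ is abelian'' (false in general, since $M$ is the centralizer of a maximal split torus); what you actually need is $\langle\nu_M(m),\alpha\rangle=0$ for $\alpha\in\Phi_{J_\cF}$ together with normality of $M^\flat$ in $M$ (because $\Lambda_M=M/M_1$ is abelian), or simply the citations the paper uses, namely Theorem \ref{decompositionwithLevi} and \cite[Lemma \ref{normalizingI}]{UoperatorsI2021}.
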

\begin{proof}
Let $b\in P_\cF$ be any representative of $gK^{\flat}$. Combining 4 - Theorem \ref{decompositionwithLevi} and \cite[Lemma \ref{normalizingI}]{UoperatorsI2021}, we have
$$K_\cF^\flat m K_\cF^\flat = U_{J_\cF}^+ m K_\cF^\flat \text{ and } mU_{J_\cF}^+ m^{-1} \subset U_{J_\cF}^+\text{ for all }m\in M_\cF^-,$$
Then, for $x={\bf 1}_{pK^{\flat}}$ and $f={\bf 1}_{K_\cF^{\flat} mK_\cF^{\flat}}$, we get 
$$x\bullet_\cF f = {\bf 1}_{pU_{J_\cF}^+ mK_\cF^{\flat}}*_{K_\cF^{\flat}}{\bf 1}_{K^{\flat}}
=\sum_{u \in U_{J_\cF}^+/  mU_{J_\cF}^+ m^{-1}} {\bf 1}_{pumK^{\flat}}.$$
Accordingly, for any $m, m' \in \Lambda_{M,J_\cF}^{-,\flat}$, we then have
\begin{align*}
(\mathbf{1}_{gK^{\flat}}\bullet_\cF h_{m,\cF}^{\flat} )\bullet_\cF h_{m',\cF}^{\flat}&=(\sum_{u \in U_{J_\cF}^+/ U_{J_\cF}^+\cap mU_{J_\cF}^+ m^{-1}} {\bf 1}_{pumK^{\flat}})\bullet_\cF h_{m',\cF}^{\flat}\\
&=(\sum_{u \in U_{J_\cF}^+/ U_{J_\cF}^+\cap mU_{J_\cF}^+ m^{-1}} {\bf 1}_{pumK_\cF^{\flat}}  *_{K_\cF^{\flat}} h_{m',\cF}^{\flat})*_{K_\cF^{\flat}}\mathbf{1}_{K^{\flat}}\\
&=( {\bf 1}_{pK_\cF^{\flat}}*_{K_\cF^{\flat}} h_{m,\cF}^{\flat}*_{K_\cF^{\flat}} h_{m',\cF}^{\flat})*_{K_\cF^{\flat}}\mathbf{1}_{K^{\flat}}\\
&\overset{\text{Lem. \ref{facetaddit}}}{=} {\bf 1}_{pK_\cF^{\flat}}*_{K_\cF^{\flat}} h_{m+m',\cF}^{\flat}*_{K_\cF^{\flat}}\mathbf{1}_{K^{\flat}}\\
&= {\bf 1}_{pK^{\flat}} \bullet_\cF h_{m+m',\cF}^{\flat}.\quad\qedhere
\end{align*} 
\end{proof}

\begin{remark}\label{functo216}
From now on, for any $m\in M_\cF^-$ we denote $x\bullet_\cF {\bf 1}_{K_\cF m K_\cF}$ simply by $x\bullet_\cF m$. 
By Remark \ref{functo1}, for any $m\in M_\cF^-$ and any $x\in \cC_c(G/K^{\flat},\Z)$, we have
$$x\bullet_\cF m = \left(s_\cF(x)*_{K_\cF} {\bf 1}_{K_\cF m K_\cF} \right)*_{K_\cF} {\bf 1}_{K}=\left(s_\a(x)  *_{I}  {\bf 1}_{K_\cF m K_\cF} \right)*_{K_\cF} {\bf 1}_{K}.$$ 
But, by \cite[Lemma \ref{normalizingI}]{UoperatorsI2021}, we have $K_\cF m K_\cF=I m K_\cF$, for $m\in M_\cF^-$. Therefore, 
$$x\bullet_\cF m =\left(s_\a(x)  *_{I}  {\bf 1}_{I m I} \right) *_{I}  {\bf 1}_{K_\cF}*_{K_\cF} {\bf 1}_{K}= x\bullet_\a {\bf 1}_{I m I}=x\bullet_\a m .$$
We can also argue explicitly; 
we saw that for $x={\bf 1}_{pK^{\flat}}$ $(b \in B)$ and $f={\bf 1}_{K_\cF^{\flat} mK_\cF^{\flat}}$ $(m\in M_\cF^-)$, so by Lemma \ref{bijection42} one shows
$$x\bullet_\cF m =\sum_{u \in U_{J_\cF}^+/ mU_{J_\cF}^+ m^{-1}} {\bf 1}_{pumK^{\flat}}=\sum_{u \in I^+/ mI^+ m^{-1}} {\bf 1}_{pumK^{\flat}}=x\bullet_\a m.$$
We may then (and will) omit the subscript $\cF$, and just write $x\bullet m$ viewing $m$ in $M^-$. 
In conclusion, we have a commutative diagram
$$\begin{tikzcd}[column sep= tiny]
\cC_c(G/K^{\flat},\Z)\arrow[equal]{d}&\times_{\jmath_\cF^\flat}&\mathcal{R}_{\cF}^{-,\flat} \arrow[hook]{d}\arrow{rrrrr}&&&&&\cC_c(G/K^{\flat},\Z)\arrow[equal]{d}\\
\cC_c(G/K^{\flat},\Z)&\times_{\jmath_\a^\flat}&\mathcal{R}_{\a}^{-,\flat} \arrow{rrrrr}&&&&&\cC_c(G/K^{\flat},\Z).
\end{tikzcd}$$
\end{remark}

\begin{remark}\label{compatibilitybis}
Recall that \cite[Proposition \ref{ringintertwiners}]{UoperatorsI2021} gives another interpretation of the parahoric--Hecke algebra $\cH_{K_\cF^{\flat}}(\Z)$, namely:
$$\begin{tikzcd}[row sep= tiny]\cH_{K_\cF^{\flat}}(\Z) \arrow{r}{\simeq}& \Z[K_\cF^{\flat} \backslash G /K_\cF^{\flat}] \arrow{r}{\simeq}&( \End_{\Z[G]}(\Z[G/K_\cF^{\flat}]))^{\text{opp}},\end{tikzcd}$$
here, the superscript {\em opp} indicates the opposite ring. 
when $\cF=a_\circ$, the resulting Hecke algebra is commutative, so we may drop the superscript "opp". 
We also have an obvious canonical identification $\Z[G/K_{\cF}]\simeq  \cC_c(G/K_{\cF},\Z)$, which intertwines the left action of $\End_{\Z[G]}\Z[G/K_{\cF}]$ on $\Z[G/K_{\cF}]$ with the right action of $\cH_{K_{\cF}}(\Z)$ on $\cC_c(G/K_{\cF},\Z)$. 
For any $m\in M_\cF^-$, we also write $h_{\cF,m}^\flat$ (by abuse of notation) for the element in $\End_{\Z[G]}\Z[G/K_{\cF}]$ corresponding to $\mathbf{1}_{K_\cF^\flat m K_\cF^\flat} \in \cH_{K_\cF^{\flat}}(\Z)$.

Using these identifications, the "excursion pairing" becomes
$$\begin{tikzcd}
\Z[G/K^{\flat}]\times (\End_{\Z[G]}(\Z[G/K_\cF^{\flat}])])^{\text{opp} }\arrow{r}& \Z[G/K^{\flat}];& (x,f)\arrow[mapsto]{r}&x \bullet_\cF f =\pi_\cF \circ f\circ s_\cF (x).
\end{tikzcd}$$

\end{remark}

\begin{lemma}\label{faithfulness}
The action of $\mathcal{R}_{\cF}^{-,\flat}$ on the $\Z$-module $\Z[G/K^\flat]\simeq \cC_c(G/K^{\flat},\Z)$ is faithful.
\end{lemma}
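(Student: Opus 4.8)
The plan is to show that if $x \bullet_\cF r = 0$ in $\Z[G/K^\flat]$ for every $x \in \cC_c(G/K^\flat,\Z)$, then $r = 0$ in $\mathcal{R}_\cF^{-,\flat}$. By linearity it suffices to treat $r = \sum_{i} c_i m_i$ with the $m_i \in \Lambda_{M,J_\cF}^{-,\flat}$ pairwise distinct, and to test against the single element $x = \mathbf{1}_{K^\flat}$ (taking $p = 1 \in P_\cF$ in the formula from Lemma \ref{Uaction}). Then the defining formula computed in the proof of Lemma \ref{Uaction} gives
$$\mathbf{1}_{K^\flat} \bullet_\cF m = \sum_{u \in U_{J_\cF}^+/mU_{J_\cF}^+m^{-1}} \mathbf{1}_{umK^\flat},$$
so that $\mathbf{1}_{K^\flat} \bullet_\cF r = \sum_i c_i \sum_{u \in U_{J_\cF}^+/mU_{J_\cF}^+m^{-1}} \mathbf{1}_{um_iK^\flat}$ in the free $\Z$-module $\Z[G/K^\flat]$.

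The key step is then to check that the various cosets $u m_i K^\flat$ that occur are \emph{distinct} across different indices $i$, i.e. that the supports of $\mathbf{1}_{K^\flat} \bullet_\cF m_i$ for distinct $m_i$ are disjoint; once this is known, the coefficient of $\mathbf{1}_{m_i K^\flat}$ (take $u = 1$) in $\mathbf{1}_{K^\flat} \bullet_\cF r$ is exactly $c_i$, forcing all $c_i = 0$. To see disjointness: if $u m_i K^\flat = u' m_j K^\flat$ with $u, u' \in U_{J_\cF}^+$, then $m_j^{-1} u'^{-1} u m_i \in K^\flat$. Since $U_{J_\cF}^+ \subset U^+ \subset \U(F)^+$ and $m_i \in M_\cF^- \subset M^-$, the Iwasawa/cell decomposition of $G$ with respect to $K^\flat = K^\flat M^\flat$ — concretely the fact (from the basis description of $\cH_{K^\flat}(\Z)$ recalled in the Example after the Hecke-algebra subsection) that the double cosets $K^\flat m K^\flat$ for $m \in \Lambda_M^{-,\flat}$ are pairwise disjoint — forces the images of $m_i$ and $m_j$ in $\Lambda_M^\flat$ to agree, hence (both lying in $\Lambda_{M,J_\cF}^{-,\flat}$, which injects into $\Lambda_M^\flat$ via $\jmath_\cF^\flat$ being well-defined on classes) $m_i = m_j$ in $\Lambda_{M,J_\cF}^{-,\flat}$, i.e. $i = j$.

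The main obstacle is the disjointness claim: one must be careful that $m_i \neq m_j$ in $\Lambda_{M,J_\cF}^{-,\flat}$ really does imply $K^\flat m_i K^\flat \cap U_{J_\cF}^+ m_j K^\flat = \varnothing$, not merely $K^\flat m_i K^\flat \neq K^\flat m_j K^\flat$. This follows by combining the Iwahori-type normalization $K_\cF^\flat m K_\cF^\flat = U_{J_\cF}^+ m K_\cF^\flat$ from \cite[Lemma \ref{normalizingI}]{UoperatorsI2021} (used already in Lemma \ref{Uaction}) with $K_\cF^\flat m K_\cF^\flat *_{K_\cF^\flat} \mathbf{1}_{K^\flat}$ landing in $K^\flat m K^\flat$: an equality $u m_i k = u' m_j k'$ with $u,u' \in U_{J_\cF}^+$, $k,k' \in K^\flat$ lies in $K^\flat m_i K^\flat \cap K^\flat m_j K^\flat$, and the Cartan-type disjointness for $K^\flat$ then gives $m_i = m_j$ in $\Lambda_M^\flat$; since both lie in the subset $\Lambda_{M,J_\cF}^{-,\flat}$ this is the desired conclusion. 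Everything else is a bookkeeping computation with the explicit formula for $\bullet_\cF$ already established, so no further input is needed.
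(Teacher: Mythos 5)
Your proof is correct, and its overall shape (test the action on the single element ${\bf 1}_{K^\flat}$, use the explicit coset formula from Lemma \ref{Uaction}, conclude by disjointness of supports) matches the paper's; the difference is the decomposition you invoke at the key step. The paper rewrites ${\bf 1}_{K^\flat}\bullet(\sum_i s_i m_i)=\sum_i s_i{\bf 1}_{I^\flat m_i K^\flat}$ and gets linear independence from the Iwahori--Iwasawa bijection $I^\flat\backslash G/K^\flat\cong\Lambda_M^\flat$, which separates \emph{all} classes in $\Lambda_M^\flat$; you instead observe that $U_{J_\cF}^+\subset K_\cF\subset K^\flat$, so the support of ${\bf 1}_{K^\flat}\bullet_\cF m_i$ lies inside the Cartan cell $K^\flat m_iK^\flat$, and you separate distinct $m_i$ using the disjointness of the double cosets $K^\flat mK^\flat$ for $m\in\Lambda_M^{-,\flat}$ (the basis statement for $\cH_{K^\flat}(\Z)$, i.e.\ the Cartan decomposition of \cite[Proposition \ref{cartanK}]{UoperatorsI2021}). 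Your route only works because the relevant elements are antidominant, which they are since $\Lambda_{M,J_\cF}^{-,\flat}\subset\Lambda_M^{-,\flat}$, whereas the paper's $I^\flat\backslash G/K^\flat$ argument would separate arbitrary elements of $\Lambda_M^\flat$; on the other hand, your Cartan argument is exactly the one the paper itself uses for the geometric analogue (Lemma \ref{geometricfaithfulness}), so both inputs are already available in the text and neither buys much over the other here.

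One small point of hygiene: your claim that the coefficient of ${\bf 1}_{m_iK^\flat}$ is \emph{exactly} $c_i$ tacitly uses that the cosets $um_iK^\flat$, $u\in U_{J_\cF}^+/m_iU_{J_\cF}^+m_i^{-1}$, are pairwise distinct, i.e.\ that $U_{J_\cF}^+\cap m_iK^\flat m_i^{-1}=m_iU_{J_\cF}^+m_i^{-1}$ (which does hold, by the same computations underlying Lemma \ref{bijection42}); but even without checking this, the coefficient is $c_i$ times a positive integer, which already forces $c_i=0$, so the argument stands.
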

\begin{proof}For any  pairwise distinct $m_1,\cdots m_r \in \Lambda_{M,J_\cF}^{-,\flat}$ and $s_1, \cdots, s_r\in \Z$, we have
\begin{align*}
{\bf 1}_{K^\flat} \bullet (\sum_i s_i\,{m_i})
&= \sum_{i} s_i {\bf 1}_{K_\cF^{\flat} m_iK^{\flat}}=\sum_{i} s_i {\bf 1}_{I^{\flat} m_iK^{\flat}}.
\end{align*}
But, by \cite[Proposition 3.4.1]{UoperatorsI2021}, we know that $I^{\flat}\backslash G/K^{\flat} \cong \widetilde{W}^{\flat}/W_{a_\circ}^{\flat}\cong \Lambda_M^{\flat}$, and so all ${\bf 1}_{I^{\flat} m_iK^{\flat}}, m\in \Lambda_M^{\flat}$ are linearly independent. This shows the lemma.
\end{proof}
\begin{corollary}\label{injectU}\label{Zinjectintegralitytilde}\label{Pinvariance} The action of $\mathcal{R}_{\cF}^{-,\flat}$ on $\Z[G/K^{\flat}]$ induces an embedding of rings $$\begin{tikzcd}\varphi_\cF \colon \mathcal{R}_{\cF}^{-,\flat} \arrow[hook]{r}{}& \End_{\Z[P_\cF]}\Z[G/K^{\flat}], \quad m \mapsto \pi_\cF \circ h_{\cF,m}^\flat \circ s_\cF.\end{tikzcd}$$
\end{corollary}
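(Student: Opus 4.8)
The plan is to read the statement off the material already assembled: Lemma~\ref{Uaction} supplies the action, Lemma~\ref{faithfulness} gives injectivity, and Remark~\ref{compatibilitybis} together with $3$ - Theorem~\ref{decompositionwithLevi} supplies the $P_\cF$-equivariance. No new computation should be needed.

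First I would check that $\varphi_\cF$ is a homomorphism of $\Z$-algebras. By Lemma~\ref{Uaction}, the excursion pairing $\bullet_\cF$ restricted along $\jmath_\cF^\flat$ is a right action of $\mathcal{R}_\cF^{-,\flat}$ on $\cC_c(G/K^\flat,\Z)\simeq\Z[G/K^\flat]$; since $\Lambda_{M,J_\cF}^{-,\flat}$ is an abelian monoid, $\mathcal{R}_\cF^{-,\flat}=\Z[\Lambda_{M,J_\cF}^{-,\flat}]$ is commutative and this right action is the same datum as the $\Z$-linear map $\varphi_\cF\colon m\mapsto(x\mapsto x\bullet_\cF m)$. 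Additivity of $\varphi_\cF$ is the bilinearity of $\bullet_\cF$; $\varphi_\cF$ sends the unit $m=0$ to $\id$ because $s_\cF$ is a section of $-*_{K_\cF^\flat}{\bf 1}_{K^\flat}$; and multiplicativity, $\varphi_\cF(m)\circ\varphi_\cF(m')=\varphi_\cF(m+m')=\varphi_\cF(m')\circ\varphi_\cF(m)$, is exactly the identity $(x\bullet_\cF m)\bullet_\cF m'=x\bullet_\cF(m+m')$ established in the proof of Lemma~\ref{Uaction}, combined with $m+m'=m'+m$. By Remark~\ref{compatibilitybis}, on the level of operators $\varphi_\cF(m)$ is given by the stated formula $\pi_\cF\circ h_{\cF,m}^\flat\circ s_\cF$.

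Injectivity of $\varphi_\cF$ is immediate: Lemma~\ref{faithfulness} states that the $\mathcal{R}_\cF^{-,\flat}$-action on $\Z[G/K^\flat]$ is faithful, so $\ker\varphi_\cF=0$. It remains to see $\varphi_\cF(m)\in\End_{\Z[P_\cF]}\Z[G/K^\flat]$, i.e.\ that $\varphi_\cF(m)$ commutes with the left $P_\cF$-action. In the factorization $\varphi_\cF(m)=\pi_\cF\circ h_{\cF,m}^\flat\circ s_\cF$ the projection $\pi_\cF\colon\Z[G/K_\cF^\flat]\to\Z[G/K^\flat]$ is $G$-equivariant, and $h_{\cF,m}^\flat\in\End_{\Z[G]}\Z[G/K_\cF^\flat]$ is $G$-equivariant by its construction in Remark~\ref{compatibilitybis}; so it suffices to observe that the section $s_\cF$ is $P_\cF$-equivariant. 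But $s_\cF$ is, by definition, induced by the chain $G/K^\flat\riso P_\cF/(P_\cF\cap K^\flat)\hookrightarrow G/K_\cF^\flat$, the first map being the Iwasawa bijection for $G=P_\cF K^\flat$ (Proposition~\ref{Iwasawadec}) and the second the inclusion coming from $P_\cF\cap K_\cF^\flat=P_\cF\cap K^\flat$ ($3$ - Theorem~\ref{decompositionwithLevi}), and on basis vectors $s_\cF({\bf 1}_{bK^\flat})={\bf 1}_{bK_\cF^\flat}$ for $b\in P_\cF$; both maps in the chain are manifestly equivariant for left multiplication by $P_\cF$. Hence $\varphi_\cF(m)$ is a composite of $P_\cF$-equivariant maps, so $\varphi_\cF$ factors through $\End_{\Z[P_\cF]}\Z[G/K^\flat]$, completing the proof. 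I expect no serious obstacle; the one point that is not a pure citation is the $P_\cF$-equivariance of $s_\cF$: although $s_\cF$ is merely $\Z$-linear as a map of Hecke modules and is \emph{not} $G$-equivariant, it \emph{is} $P_\cF$-equivariant, which is precisely the level of equivariance recorded in the target $\End_{\Z[P_\cF]}$.
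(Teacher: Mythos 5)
Your proposal is correct and follows essentially the same route as the paper: injectivity via Lemma~\ref{faithfulness}, the operator formula $\pi_\cF\circ h_{\cF,m}^\flat\circ s_\cF$ via Remark~\ref{compatibilitybis}, and the ring-homomorphism property from the computation in Lemma~\ref{Uaction}. The only cosmetic difference is that you obtain the $P_\cF$-equivariance structurally, from the $P_\cF$-equivariance of $s_\cF$ (and $G$-equivariance of $\pi_\cF$ and $h_{\cF,m}^\flat$), whereas the paper reads it off the explicit formula ${\bf 1}_{pK^\flat}\bullet_\cF m=\sum_{u}{\bf 1}_{pumK^\flat}$; both arguments are valid and rest on the same facts (Proposition~\ref{Iwasawadec} and $3$ - Theorem~\ref{decompositionwithLevi}).
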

\begin{proof}
Lemma \ref{faithfulness} shows that we have an embedding of rings $ \mathcal{R}_{\cF}^{-,\flat} \hra \End_{\Z}\Z[G/K^{\flat}]$. Remark \ref{compatibilitybis} explicate why $\varphi_\cF(m)= \pi_\cF \circ h_{\cF,m}^\flat \circ s_\cF$, for any $m\in \Lambda_{M,J_\cF}^{-,\flat}$. 
The $P_\cF$-equivariance is a straightforward consequence of the formula
$${\bf 1}_{pK}\bullet_\cF m =\sum_{u \in U_{J_\cF}^+/ U_{J_\cF}^+\cap mU_{J_\cF}^+ m^{-1}} {\bf 1}_{pumK^{\flat}},$$
for $p\in P$ and $m\in \Lambda_{M,J_\cF}^{-,\flat}$.
\end{proof}

\subsection{Definition of $\mathbb{U}^{\flat}$-operators}

\begin{definition}\label{definuflat}
Define the ring of $\mathbb{U}_\cF^{\flat}$-operators to be the commutative subring
$$\varphi_\cF ( \mathcal{R}_{\cF}^{-,\flat})\subset  \End_{\Z[P_\cF]}\Z[G/K^{\flat}].
$$
When $\cF=\a$, we simply write $\mathbb{U}^{\flat}$ and put ${u}_m^{\flat}:= \varphi_\cF(m) \in \mathbb{U}^{\flat}$, for any $m\in \Lambda_M^{-,\flat}$.
\end{definition}
\begin{remark}\label{commutflatuop}
(i) Although, $\End_{\Z[G]}\cC_c(G/K,\Z)$ and $\mathbb{U}_\cF^{\flat}$ are both commutative subrings of $\End_{\Z[P_\cF]}\cC_c(G/K^{\flat},\Z)$, they do not commute with each other. 
(ii) Thanks to Remark \ref{functo216}, we have a commutative diagram
$$\begin{tikzcd}
\mathbb{U}_{\cF}^{\flat} \arrow[hook]{d}\arrow[hook]{r}&\End_{\Z[P_\cF]}\Z[G/K^{\flat}]\arrow[hook]{d}\\
\mathbb{U}^{\flat}\arrow[hook]{r}&\End_{\Z[B]}\Z[G/K^{\flat}]
\end{tikzcd}$$
\end{remark}

\subsection{Integral Satake and Bernstein homomorphisms}
\subsubsection{The Weyl dot-action}
We define a twisted action of $W$ on $\Z[q^{-1}] \otimes_\Z \mathcal{R}^{\flat}$, is given on basis elements by 
$$\dot{w} (mM^{\flat}) = w(m)M^{\flat}\in \Lambda_M^{\flat}$$ 
wehre $c(m,w):= \left(\frac{\delta_B(w(m))}{\delta_B(m)}\right)^{1/2}$ and extended $\Z[q^{-1}]$-linearly to $\Z[q^{-1}] \otimes_\Z \mathcal{R}^{\flat}$.

We write $\dot{W}$ to specify that we are dealing with the dot-action of $W$ and not the standard one. 
The natural projection map ${\square}^{\flat}\colon \mathcal{R} \to \mathcal{R}^{\flat}$ is $\dot{W}$-equivariant, i.e. 
$(\dot{w}(m))^{\flat}= \dot{w}(m^{\flat})$ for any $w\in W$ and any $m\in \Lambda_M$. 
\begin{lemma}
If $m \in M^-$ then $c(m,w)\in q^{\Z_{\ge 0}}$, for any $w\in W$. Equivalently, $\dot{w}(\mathcal{R}^{-,\flat}) \subset \mathcal{R}^\flat$ for any $\dot{w} \in \dot{W}$.
\end{lemma}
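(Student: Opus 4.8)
The plan is to compute $c(m,w)$ explicitly for $m\in M^-$ and to recognise it as a product of indices of open compact subgroups, from which $c(m,w)\in q^{\Z_{\ge 0}}$ is immediate. First, the second statement only reformulates the first: the dot-action sends the basis element $mM^{\flat}$ to $c(m,w)\,w(m)M^{\flat}$ (and $c(m,w)$ depends on $m$ only through $\nu_M(m)$, so it factors through $\Lambda_M^{\flat}=M/M^{\flat}$, as $\nu_M$ is trivial on $M^{1}\supseteq M^{\flat}$), so $\dot w$ maps $\mathcal{R}^{-,\flat}$ into $\mathcal{R}^{\flat}=\Z[\Lambda_M^{\flat}]$ exactly when $c(m,w)\in\Z$ for every $m\in M^-$, which the computation below establishes in the sharp form $c(m,w)\in q^{\Z_{\ge 0}}$. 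By the $\dot W$-equivariance of $\square^{\flat}$ it is moreover harmless to carry out the root bookkeeping on $\mathcal{R}$ itself. So it suffices to show $c(m,w)\in q^{\Z_{\ge 0}}$ for $m\in M^-$, $w\in W$.

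For $x\in M$ the modulus character is $\delta_B(x)=\bigl|\det\bigl(\Ad(x);\Lie(\U^{+})\bigr)\bigr|_F=\prod_{\alpha\in\Phi^{+}}\bigl|\det\bigl(\Ad(x);\mathfrak g_{\alpha}\bigr)\bigr|_F$. For any lift $\dot w\in\Nbf(F)$ of $w$ one has $\nu_M(\dot w x\dot w^{-1})=w\cdot\nu_M(x)$, and $\Ad(\dot w)$ restricts to an isomorphism $\mathfrak g_{\alpha}\overset{\sim}{\to}\mathfrak g_{w\alpha}$, so $\dim_F\mathfrak g_{\alpha}$ is invariant under $W$ and under $\alpha\mapsto-\alpha$. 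Hence $\delta_B(w(m))=\prod_{\beta\in w^{-1}\Phi^{+}}\bigl|\det(\Ad(m);\mathfrak g_{\beta})\bigr|_F$; dividing by $\delta_B(m)$, cancelling the factors indexed by $\Phi^{+}\cap w^{-1}\Phi^{+}$ and replacing each remaining $\beta\in w^{-1}\Phi^{+}\cap\Phi^{-}$ by $-\beta\in\Phi^{+}\cap w^{-1}\Phi^{-}$, one obtains
$$c(m,w)=\Bigl(\frac{\delta_B(w(m))}{\delta_B(m)}\Bigr)^{1/2}=\prod_{\alpha\in\Phi^{+}\cap w^{-1}\Phi^{-}}\bigl|\det\bigl(\Ad(m);\mathfrak g_{\alpha}\bigr)\bigr|_F^{-1}$$
(this is the classical positivity of $\rho_B-w^{-1}\rho_B$, with $2\rho_B:=\sum_{\alpha\in\Phi^{+}}(\dim_F\mathfrak g_{\alpha})\,\alpha$, in disguise). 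By \cite[Lemma~\ref{NactionUalphar}]{UoperatorsI2021} one has $m\,U_{\alpha+0}\,m^{-1}=U_{\alpha-\langle\alpha,\nu_M(m)\rangle}$, so $\bigl|\det(\Ad(m);\mathfrak g_{\alpha})\bigr|_F^{-1}=[\,U_{\alpha+0}:m\,U_{\alpha+0}\,m^{-1}\,]$ for $\alpha\in\Phi_{\red}^{+}$ (grouping $\alpha$ with $2\alpha$ when $\Phi$ is not reduced).

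It remains to see that each such index lies in $q^{\Z_{\ge 0}}$. Fix $\alpha\in\Phi^{+}$ and $m\in M^-$: by definition of $M^-$ one has $\langle\alpha,\nu_M(m)\rangle\le 0$, so $m\,U_{\alpha+0}\,m^{-1}=U_{\alpha-\langle\alpha,\nu_M(m)\rangle}\subseteq U_{\alpha+0}$ — this is the inclusion $m\,U^{+}\,m^{-1}\subseteq U^{+}$ already invoked in the proof of Lemma~\ref{Uaction}, cf.\ \cite[Lemma~\ref{normalizingI}]{UoperatorsI2021}. Thus $[\,U_{\alpha+0}:m\,U_{\alpha+0}\,m^{-1}\,]$ is the index of an open subgroup inside the pro-$p$ group $U_{\alpha+0}$, whose root-group filtration has successive quotients finite-dimensional $\kappa_F$-vector spaces; hence this index lies in $q^{\Z_{\ge 0}}$. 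Multiplying over $\alpha\in\Phi^{+}\cap w^{-1}\Phi^{-}$ gives $c(m,w)\in q^{\Z_{\ge 0}}$, which proves the lemma. The only substantive ingredient is the classical positivity of $\sum_{\alpha>0}(\dim_F\mathfrak g_{\alpha})\alpha-w^{-1}(\cdots)$ as a non-negative sum of positive roots, combined with the antidominance of $\nu_M(m)$; the one point requiring care is matching the sign conventions of \cite{UoperatorsI2021} for $\nu_M$, $\delta_B$ and $M^-$ (and pairing $\alpha$ with $2\alpha$ in the non-reduced case) so that every exponent above comes out $\ge 0$ rather than $\le 0$.
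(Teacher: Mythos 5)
Your argument is correct, and it is worth noting that the paper itself gives no proof here: it simply cites the companion paper \cite{UoperatorsI2021}, so what you have written is a self-contained substitute for the outsourced argument rather than a variant of a proof appearing in the text. Your conventions match the paper's: $M^-$ is the antidominant cone ($\langle\alpha,\nu_M(m)\rangle\le 0$ for $\alpha\in\Phi^+$), and the conjugation formula $mU_{\alpha+0}m^{-1}=U_{\alpha-\langle\alpha,\nu(m)\rangle}$ is exactly the one invoked in the proofs of Lemmas \ref{bijection42} and \ref{Uaction}, so each factor in your product is the index of a smaller open subgroup inside $U_{\alpha+0}$, hence a nonnegative power of $q$ (the successive quotients of the affine root filtration being finite-dimensional vector spaces over the residue field). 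Two steps are left implicit but are standard and worth recording: (a) the cancellation $|\det(\Ad(m);\mathfrak g_{-\alpha})|_F=|\det(\Ad(m);\mathfrak g_{\alpha})|_F^{-1}$ needed to pass from the index set $w^{-1}\Phi^{+}\cap\Phi^{-}$ to $\Phi^{+}\cap w^{-1}\Phi^{-}$ (via the $M$-equivariant duality between $\mathfrak g_{\alpha}$ and $\mathfrak g_{-\alpha}$, equivalently $\delta_{B^-}=\delta_B^{-1}$ factor by factor), and (b) the Haar-measure identity $[H:mHm^{-1}]=|\det(\Ad(m);\Lie\,\U_{(\alpha)})|_F^{-1}$ for a compact open $H$ in the group generated by $U_\alpha$ and $U_{2\alpha}$ with $mHm^{-1}\subset H$, which is what equates your determinant factor with the index and handles the non-reduced case once $\alpha$ and $2\alpha$ are grouped, as you do. A pleasant feature of your route is that the ``classical positivity of $\rho-w^{-1}\rho$'' you allude to is never actually needed: every $\alpha\in\Phi^{+}\cap w^{-1}\Phi^{-}$ is a positive root, so antidominance of $\nu_M(m)$ makes each factor separately a nonnegative power of $q$, which is both more elementary and sharper than invoking the global positivity statement.
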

\begin{proof}
This is \cite[Lemma \ref{crintegral} (ii)]{UoperatorsI2021}.
\end{proof}
\subsubsection{Universal unramified principal series}
Let $\cF \subset \cA$ be a facet containing in its closure a special point $a$. 
Define the universal unramified principal series right $\Z$-module $\cM_{K_{\cF}^{\flat}}(\Z) = \Z[M^{\flat}U^+\backslash G/K_{\cF}]$. 
It comes with a natural right $(\End_{\Z[G]}(\Z[G/K_\cF^{\flat}]))^{\text{opp}}$-module structure. This can be seen as the natural right action of $\cH_{K^\flat}(\Z)$ by convolution with respect to the normalized measure $\mu_{K_\cF^{\flat}}$ giving $K_\cF^{\flat}$ volume $1$. 
We write the action of an element $h \in \End_{\Z[G]}(\Z[G/K_\cF^{\flat}])$ on $v\in \cM_{K_{\cF}^{\flat}}(\Z) $ by $v^h$.

The family $$\{v_{mw,\cF}^{\flat}\colon (m,w) \in \Lambda_M^\flat \times ({N}_{1,a_\circ}^{\flat}/N_{1,\cF}^{\flat})\},$$ forms a $\Z$-basis for the $\Z$-module $\cM_{K_\cF^{\flat}}(\Z)$ \cite[Lemma \ref{basisMI}]{UoperatorsI2021}. 

We endow $\cM_{K_{\cF}^{\flat}}(\Z)$ with a left $\mathcal{R}^{\flat}$-action given on $\Z$-basis elements by: 
$$m'\cdot v_{,\cF}^{\flat} = v_{m'mw,\cF}^{\flat},\quad \text{ for any $m',m \in \Lambda_M^{\flat}$ and any $w \in {N}_{1,a_\circ}^{\flat}/N_{1,\cF}^{\flat}$}.$$
It is clear that the actions of $(\End_{\Z[G]}(\Z[G/K_\cF^{\flat}]))^{\text{opp}}$ and $\mathcal{R}^{\flat}$ on $\cM_{K_{\cF}^{\flat}}(\Z)$ commute: $\cM_{K_{\cF}^{\flat}}(\Z)$ is an $(\mathcal{R}^{\flat},(\End_{\Z[G]}(\Z[G/K_\cF^{\flat}]))^{\text{opp}})$-bimodule.
The following proposition gives a precise description of the $\mathcal{R}^{\flat}$-module $\cM_{K_\cF^{\flat}}(\Z)$. 
\begin{proposition}\label{UoperatorsI2021uop}
The $\mathcal{R}^{\flat}$-module $\cM_{K_\cF^{\flat}}(\Z)$ is free of rank $r_\cF:=|{N}_{1,a_\circ}^{\flat}/N_{1,\cF}^{\flat}|$, with basis $\{v_{w,\cF}^{\flat},\, w\in D_\cF^{\flat}\}$ where $D_\cF^{\flat}\subset {N}_{1,a_\circ}^{\flat}$ is any fixed set of representatives for ${N}_{1,a_\circ}^{\flat}/N_{1,\cF}^{\flat}$. In particular, $\{v_{w,\a}^{\flat},\, w\in W_{a_\circ}\}$ is a canonical basis for the $\mathcal{R}^{\flat}$-module $\cM_{I^{\flat}}(\Z)$, and $v_{1,a_\circ}$ is a canonical generator for the $\mathcal{R}^{\flat}$-module $\cM_{K^{\flat}}(\Z)$.
\end{proposition}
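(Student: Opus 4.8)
The plan is to reduce the statement to the elementary fact that a group algebra $\Z[\Lambda]$, acting on a free $\Z$-module by translating a fixed basis along its $\Lambda$-coordinate, produces a free module. Concretely, by \cite[Lemma \ref{basisMI}]{UoperatorsI2021} the assignment $(m,w)\mapsto v_{mw,\cF}^{\flat}$ is a bijection from the product set $\Lambda_M^{\flat}\times(N_{1,a_\circ}^{\flat}/N_{1,\cF}^{\flat})$ onto a $\Z$-basis of $\cM_{K_\cF^{\flat}}(\Z)$, and by construction the left $\mathcal{R}^{\flat}$-action is given on this basis by $m'\cdot v_{mw,\cF}^{\flat}=v_{m'mw,\cF}^{\flat}$, i.e. it translates the $\Lambda_M^{\flat}$-coordinate and fixes the coset coordinate. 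Hence, as a left $\mathcal{R}^{\flat}$-module, $\cM_{K_\cF^{\flat}}(\Z)\cong\bigoplus_{w}\mathcal{R}^{\flat}$, the sum indexed by $N_{1,a_\circ}^{\flat}/N_{1,\cF}^{\flat}$.

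To carry this out, I would first fix a set $D_\cF^{\flat}\subset N_{1,a_\circ}^{\flat}$ of representatives for $N_{1,a_\circ}^{\flat}/N_{1,\cF}^{\flat}$ and let $v_{w,\cF}^{\flat}$ denote the basis vector attached to the pair $(1,\overline{w})$, where $1$ is the identity of $\Lambda_M^{\flat}$ and $\overline{w}$ the class of $w$. Generation over $\mathcal{R}^{\flat}$ is then immediate: every $\Z$-basis vector $v_{mw,\cF}^{\flat}$ equals $m\cdot v_{w_0,\cF}^{\flat}$, where $w_0\in D_\cF^{\flat}$ represents $\overline{w}$, so the $v_{w,\cF}^{\flat}$ span $\cM_{K_\cF^{\flat}}(\Z)$ over $\mathcal{R}^{\flat}$. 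For $\mathcal{R}^{\flat}$-linear independence, suppose $\sum_{w\in D_\cF^{\flat}}\lambda_w\cdot v_{w,\cF}^{\flat}=0$ with $\lambda_w=\sum_{m\in\Lambda_M^{\flat}}c_{m,w}\,m\in\mathcal{R}^{\flat}$ (finite sums). Expanding by the action formula gives $\sum_{w,m}c_{m,w}\,v_{mw,\cF}^{\flat}=0$; since distinct pairs $(m,w)$ index distinct members of a $\Z$-basis, all $c_{m,w}$ vanish, hence all $\lambda_w=0$. This establishes freeness of rank $r_\cF=|N_{1,a_\circ}^{\flat}/N_{1,\cF}^{\flat}|$.

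Finally I would read off the two special cases. When $\cF=\a$ the facet is the alcove, $K_\a=I$ is the Iwahori, and $W_\a$ is trivial (Theorem \ref{decompositionwithLevi}), so $N_{1,\a}^{\flat}=M^{\flat}$ and therefore $N_{1,a_\circ}^{\flat}/N_{1,\a}^{\flat}=N_{1,a_\circ}^{\flat}/M^{\flat}=W_{a_\circ}^{\flat}\simeq W_{a_\circ}$; the basis then becomes $\{v_{w,\a}^{\flat}\colon w\in W_{a_\circ}\}$. When $\cF=\{a_\circ\}$ one has $K_\cF=K$ and $N_{1,\cF}^{\flat}=N_{1,a_\circ}^{\flat}$, so $r_\cF=1$ and $v_{1,a_\circ}$ is a single free generator of $\cM_{K^{\flat}}(\Z)$.

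The substantive input is imported, not proved here: it is \cite[Lemma \ref{basisMI}]{UoperatorsI2021}, which supplies the bijective parametrization of $M^{\flat}U^+\backslash G/K_\cF^{\flat}$ by $\Lambda_M^{\flat}\times(N_{1,a_\circ}^{\flat}/N_{1,\cF}^{\flat})$ (resting on an Iwasawa-type decomposition refined at the special vertex), together with the fact that the $\mathcal{R}^{\flat}$-action is honest left translation on the $\Lambda_M^{\flat}$-factor with no carry-over into the coset factor. Granting those, the argument above is pure linear bookkeeping and there is no genuine obstacle; the one point worth stating carefully is that $N_{1,a_\circ}^{\flat}/N_{1,\cF}^{\flat}$ is merely a quotient set, since $W_\cF$ need not be normal in $W_{a_\circ}$, which is precisely why the proposition is phrased in terms of an arbitrary set $D_\cF^{\flat}$ of representatives.
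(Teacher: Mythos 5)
Your argument is correct, and it is self-contained granted the two inputs you name: the $\Z$-basis $\{v_{mw,\cF}^{\flat}\colon (m,w)\in \Lambda_M^{\flat}\times (N_{1,a_\circ}^{\flat}/N_{1,\cF}^{\flat})\}$ of $\cM_{K_\cF^{\flat}}(\Z)$ from \cite[Lemma \ref{basisMI}]{UoperatorsI2021}, and the defining formula $m'\cdot v_{mw,\cF}^{\flat}=v_{m'mw,\cF}^{\flat}$ for the $\mathcal{R}^{\flat}$-action, both of which are quoted in the paper immediately before the proposition. The route differs from the paper only in that the paper gives no argument at all: it simply invokes \cite[Proposition \ref{freerank1MK}]{UoperatorsI2021} from the companion paper, of which this statement is a special case, whereas you unpack the bookkeeping yourself (generation because every $v_{mw,\cF}^{\flat}$ is $m\cdot v_{w_0,\cF}^{\flat}$ for the representative $w_0\in D_\cF^{\flat}$ of $w$, independence because distinct pairs $(m,w)$ label distinct members of a $\Z$-basis). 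Your treatment of the quotient $N_{1,a_\circ}^{\flat}/N_{1,\cF}^{\flat}$ as a mere coset set, and the specializations $\cF=\a$ (where $N_{1,\a}^{\flat}=M^{\flat}$, so the index set is $W_{a_\circ}^{\flat}\simeq W_{a_\circ}$) and $\cF=\{a_\circ\}$ (rank one, generator $v_{1,a_\circ}$), match the intended reading; the only cost of your approach is that it silently relies on the companion paper for the basis lemma, which is exactly the content the cited proposition packages, so in substance the two proofs coincide.
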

\begin{proof}
This is a particular case of \cite[Proposition \ref{freerank1MK}]{UoperatorsI2021}.
\end{proof}

\subsubsection{Satake homomorphism} 
When $\cF=a_\circ$, Proposition \ref{UoperatorsI2021uop} shows that the $\mathcal{R}^{\flat}$-module $\cM_{K^{\flat}}(\Z)$ is free of rank $1$. 
Accordingly, we have a $\Z$-algebra homomorphism 
$$\dot{\mathcal{S}}_M^{G^{\flat}} \colon (\End_{\Z[G]}(\Z[G/K^{\flat}]))^{\text{opp}}\to \mathcal{R}^{\flat}.$$
This is called the twisted Satake transform and is characterized by
$$(v_{1,K}^{\flat})^h = \dot{\mathcal{S}}_M^{G^{\flat}}(h)\cdot v_{1,K}^{\flat}, \text{ for all }h\in \End_{\Z[G]}(\Z[G/K^{\flat}]).$$ 
This is actually an embedding of $\Z$-algebras and in particular, the algebra $\End_{\Z[G]}(\Z[G/K^{\flat}])$ is a commutative. 
The following theorem gives the "integral" Satake isomorphism:
\begin{theorem}\label{integralsatake}
The twisted Satake homomorphism gives a canonical isomorphism of ${\Z}$-algebras: 
$$\dot{\cS}_M^{G^{\flat}}\colon \End_{\Z[G]}(\Z[G/K^{\flat}]) \iso {\mathcal{R}^{\flat}}^{\dot{W}}.$$
\end{theorem}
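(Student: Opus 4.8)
The plan is to deduce Theorem~\ref{integralsatake} from the already-established fact that $\dot{\cS}_M^{G^{\flat}}$ is an injective homomorphism of $\Z$-algebras into $\mathcal{R}^{\flat}$, by showing that on suitable $\Z$-bases it is \emph{unitriangular} with respect to the dominance order; an injective unitriangular endomorphism-type map between free $\Z$-modules of the same rank is automatically bijective onto the span of the target basis, which here is $\mathcal{R}^{\flat\,\dot W}$. So the whole proof reduces to (a) producing the bases, and (b) the triangularity computation.

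First I would record the two bases. On the source, the Cartan decomposition $G=\bigsqcup_{m\in\Lambda_M^{-,\flat}}K^{\flat}mK^{\flat}$ gives, as in the Example above, the $\Z$-basis $\{\mathbf{1}_{K^{\flat}mK^{\flat}}:m\in\Lambda_M^{-,\flat}\}$ of $\cH_{K^{\flat}}(\Z)\cong\End_{\Z[G]}(\Z[G/K^{\flat}])$ (the latter already known to be commutative). On the target I would check that $\mathcal{R}^{\flat\,\dot W}$ is a free $\Z$-module on the dot-orbit sums $\sigma_m:=\sum_{w\in W/W_m}\dot w(m)$, again indexed by $m\in\Lambda_M^{-,\flat}$: the Lemma above guarantees that for antidominant $m$ the scalars $c(m,w)$ lie in $q^{\Z_{\ge 0}}\subset\Z$, so $\sigma_m$ genuinely lies in $\mathcal{R}^{\flat}$ (not merely in $\Z[q^{-1}]\otimes\mathcal{R}^{\flat}$); and a general element of $\Z[\Lambda_M^{\flat}]$ is $\dot W$-invariant precisely when its coefficients are constant along $\dot W$-orbits up to the prescribed $c$-factors, each such orbit meeting $\Lambda_M^{-,\flat}$ exactly once and having its antidominant element as its minimum for the dominance order. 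Hence the $\sigma_m$ form a $\Z$-basis of $\mathcal{R}^{\flat\,\dot W}$.

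Next come the two key inputs for triangularity. The first is that the image lands in the invariants: $\dot{\cS}_M^{G^{\flat}}(h)\in\mathcal{R}^{\flat\,\dot W}$ for all $h$. I would obtain this from the compatibility of the twisted Satake and Bernstein homomorphisms, \cite[Theorem~\ref{compatibility}]{UoperatorsI2021}: the inclusion $\cH_{K^{\flat}}(\Z)\hookrightarrow\cH_{I^{\flat}}(\Z)$ lands in the subalgebra which, under the Bernstein description, is identified with $\mathcal{R}^{\flat\,\dot W}$, and this identification intertwines with $\dot{\cS}_M^{G^{\flat}}$; alternatively one argues directly with the intertwining operators on the Iwahori-level principal series $\cM_{I^{\flat}}(\Z)$ together with the surjection $\cM_{I^{\flat}}(\Z)\twoheadrightarrow\cM_{K^{\flat}}(\Z)$ under which all the $v_{w,\a}^{\flat}$ $(w\in W_{a_\circ})$ have common image $v_{1,K}^{\flat}$. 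The second is the classical Satake triangularity: computing $\dot{\cS}_M^{G^{\flat}}(\mathbf{1}_{K^{\flat}mK^{\flat}})$ through the Iwasawa decomposition $G=U^{+}MK^{\flat}$ (Proposition~\ref{Iwasawadec} for $\B$), the coset $K^{\flat}mK^{\flat}$ contributes the weight $m$ with coefficient exactly $1$, all other contributions being strictly smaller in the dominance order, and the dot-normalization is exactly what forces the remaining coefficients to be integral. Combining this with the invariance just established, $\dot{\cS}_M^{G^{\flat}}(\mathbf{1}_{K^{\flat}mK^{\flat}})=\sigma_m+\sum_{m'\prec m}a_{m,m'}\,\sigma_{m'}$ with $a_{m,m'}\in\Z$ and the sum over antidominant $m'$ strictly below $m$.

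Granting these, in any enumeration of $\Lambda_M^{-,\flat}$ refining the dominance order the matrix of $\dot{\cS}_M^{G^{\flat}}$ in the bases $\{\mathbf{1}_{K^{\flat}mK^{\flat}}\}$ and $\{\sigma_m\}$ is unipotent upper-triangular over $\Z$, hence invertible over $\Z$; together with injectivity this identifies $\End_{\Z[G]}(\Z[G/K^{\flat}])$ with $\mathcal{R}^{\flat\,\dot W}$. I expect the main obstacle to be the integral Satake computation: one must control the Iwasawa decomposition of $K^{\flat}mK^{\flat}$ carefully enough to see simultaneously that the leading coefficient is exactly $1$ and that the lower-order coefficients become integers after the dot-twist --- the latter being precisely the phenomenon isolated in the Lemma above ($c(m,w)\in q^{\Z_{\ge 0}}$ for antidominant $m$), without which one only lands in $\Z[q^{\pm 1/2}]\otimes\mathcal{R}^{\flat}$. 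The invariance of the image, while conceptually the heart of ``Satake\,$=$\,invariants'', is essentially delegated to the Bernstein--Satake compatibility theorem of \cite{UoperatorsI2021}.
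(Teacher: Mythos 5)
The paper does not actually prove this theorem here: its ``proof'' is a citation to the companion paper \cite[Theorem \ref{Ztwistedsatakeisomorphism}]{UoperatorsI2021}, so there is no in-text argument to compare with line by line. That said, your strategy --- injectivity of $\dot{\cS}_M^{G^{\flat}}$ (already recorded in the text), identification of the $\Z$-basis $\{\mathbf{1}_{K^{\flat}mK^{\flat}}\}_{m\in\Lambda_M^{-,\flat}}$ on the source and of the twisted orbit sums $\sigma_m$ on ${\mathcal{R}^{\flat}}^{\dot W}$ (using exactly the lemma $c(m,w)\in q^{\Z_{\ge 0}}$ for $m\in M^-$), and then unitriangularity with respect to the dominance order with leading coefficient $1$ --- is the standard route to the integral Satake isomorphism and is very plausibly the argument of the cited companion result. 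Two technical points you gloss over are fine but should be said: the transition matrix is infinite, so invertibility needs that for each $m$ the set of antidominant $m'\preceq m$ occurring in the support is finite (true here, since $\dot{\cS}_M^{G^{\flat}}(\mathbf{1}_{K^{\flat}mK^{\flat}})$ has finite support bounded above by $m$), and each $\dot W$-orbit must meet $\Lambda_M^{-,\flat}$ exactly once for the $\sigma_m$ to be a basis.

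The one step that is wrong as you phrased it is the $\dot W$-invariance of the image. The natural inclusion $\cH_{K^{\flat}}(\Z)\hookrightarrow\cH_{I^{\flat}}(\Z)$ (viewing $K^{\flat}$-bi-invariant functions as $I^{\flat}$-bi-invariant ones) is neither an algebra homomorphism for $*_{I^{\flat}}$ nor does it land in the centre, so it is not ``identified with $\mathcal{R}^{\flat\,\dot W}$'' under Bernstein; the compatibility of Lemma \ref{intcomp} goes in the opposite direction, via $\overline{\pi}_\a\colon Z\bigl(\cH_{I^{\flat}}(\Z^q)\bigr)\to \cH_{K^{\flat}}(\Z^q)$, $z\mapsto z*_{I^{\flat}}\mathbf{1}_{K^{\flat}}$, and by itself the commutativity of that triangle only exhibits a left inverse of $\dot{\cS}_M^{G^{\flat}}$ (its statement already presupposes that the image lies in the invariants). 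So either you prove invariance directly --- your alternative via intertwining operators on $\cM_{I^{\flat}}$, or the classical rank-one reduction --- or you must be explicit that you are importing it from \cite{UoperatorsI2021}; in the latter case beware of circularity, since in that paper the Bernstein--Satake compatibility may itself be established together with, or after, the Satake isomorphism (in the present paper the ``in particular'' clause of Lemma \ref{intcomp} is explicitly deduced \emph{from} Theorem \ref{integralsatake}). With the invariance step repaired along those lines, the rest of your outline is sound.
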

\begin{proof}
See \cite[Theorem \ref{Ztwistedsatakeisomorphism}]{UoperatorsI2021}.
\end{proof}
\subsubsection{Bernstein homomorphism}
The following proposition shows that (upon extending the coefficients' ring) the structure of the $(\End_{\Z[G]}(\Z[G/I^{\flat}])])^{\text{opp}}$-module $\mathcal{M}_{I^\flat}(\Z)$ is simple. Set $\Z^q:={\Z[q^{-1}]}$.
\begin{proposition}\label{MiHirk1}
The following homomorphism of right $(\End_{\Z^q[G]}(\Z[G/K^{\flat}])])^{\text{opp}}$-modules
\begin{align*}\hbar_{I^{\flat}}\colon \End_{\Z^q[G]}(\Z^q[G/I^{\flat}])] \longrightarrow \cM_{I^{\flat}}({\Z^q}), \quad h\longmapsto  (v_{1,\a}^{\flat})^h.\end{align*}
is an isomorphism. 
\end{proposition}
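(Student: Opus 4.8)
The plan is to show that the map $\hbar_{I^{\flat}}$ is an isomorphism by producing an explicit inverse on the level of the $\Z^q$-bases that were introduced above. Recall from Proposition~\ref{UoperatorsI2021uop} that $\{v_{w,\a}^{\flat} : w\in W_{a_\circ}\}$ is a canonical basis of $\cM_{I^{\flat}}(\Z)$ as an $\mathcal{R}^{\flat}$-module; after inverting $q$ this remains a basis of $\cM_{I^{\flat}}(\Z^q)$ over $\mathcal{R}^{\flat}\otimes_\Z \Z^q$. First I would recall the description of $\End_{\Z^q[G]}(\Z^q[G/I^{\flat}])^{\text{opp}}$ via the $\flat$-Iwahori--Hecke algebra $\cH_{I^{\flat}}(\Z^q)$ (the identification in Remark~\ref{compatibilitybis}), with its $\Z^q$-basis $\{\mathbf 1_{I^{\flat} n I^{\flat}} : n\in N/M^{\flat}\}$, and the Bernstein presentation: every element is uniquely a finite $\mathcal{R}^{\flat}\otimes\Z^q$-combination of the $T_w$ for $w\in W_{a_\circ}$ (say the "finite" part of the affine Weyl group), with the $T_w$ acting on the left of $v_{1,\a}^{\flat}$.

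The key computation is then to check that, under the action on $v_{1,\a}^{\flat}$, the Bernstein generators are sent to the basis vectors $v_{w,\a}^{\flat}$ up to an invertible (in $\Z^q$) triangular change of basis. Concretely, I would argue that $(v_{1,\a}^{\flat})^{T_w}$ equals $v_{w,\a}^{\flat}$ plus a $\Z^q$-linear combination of $v_{w',\a}^{\flat}$ with $w'$ strictly below $w$ in the Bruhat order (the off-diagonal terms coming from the decomposition of the double coset $I^{\flat} n_w I^{\flat}$ into cosets $u n_{w'} I^{\flat}$, which is exactly the kind of bookkeeping controlled by Lemma~\ref{UKUKtilde} and Lemma~\ref{bijection42}), while the "diagonal'' coefficient is $1$. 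Similarly the commuting left $\mathcal{R}^{\flat}$-action sends $m'\cdot v_{mw,\a}^{\flat}=v_{m'mw,\a}^{\flat}$ by definition, so that the full matrix of $\hbar_{I^{\flat}}$ with respect to the Bernstein basis of the source and the $v_{w,\a}^{\flat}$-basis of the target is block upper-triangular with the $\mathcal{R}^{\flat}\otimes\Z^q$-identity on the diagonal. Such a matrix is invertible over $\mathcal{R}^{\flat}\otimes\Z^q$, hence $\hbar_{I^{\flat}}$ is an isomorphism of right $\End_{\Z^q[G]}(\Z^q[G/K^{\flat}])^{\text{opp}}$-modules (the compatibility with that action being immediate, since $\hbar_{I^{\flat}}$ is defined by evaluation at a fixed vector and both sides carry the convolution action on the right).

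An alternative, cleaner route — which I would use if the triangularity bookkeeping gets heavy — is to invoke the freeness statement of Proposition~\ref{UoperatorsI2021uop} directly: $\cM_{I^{\flat}}(\Z^q)$ is free of rank $r_\a=|W_{a_\circ}|$ over $\mathcal{R}^{\flat}\otimes\Z^q$, and $\End_{\Z^q[G]}(\Z^q[G/I^{\flat}])^{\text{opp}}\cong\cH_{I^{\flat}}(\Z^q)$ is free of the same rank $|W_{a_\circ}|$ over its center-adjacent subring $\mathcal{R}^{\flat}\otimes\Z^q$ by the Bernstein presentation; then it suffices to check that $\hbar_{I^{\flat}}$ is surjective, since a surjection of finite free modules of the same rank over a (commutative Noetherian) ring is an isomorphism. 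Surjectivity reduces, again by the Bernstein presentation, to hitting each $v_{w,\a}^{\flat}$, and one shows by induction on $\ell(w)$ that $v_{w,\a}^{\flat}$ lies in the image using the leading-term computation above. I expect the main obstacle to be precisely that leading-term identity: verifying that $(v_{1,\a}^{\flat})^{T_w}$ has $v_{w,\a}^{\flat}$-coefficient a unit in $\Z^q$ and lower-triangular tail, i.e. correctly tracking how the Iwahori double-coset decomposition interacts with the $M^{\flat}U^+\backslash G/I^{\flat}$ parametrization of $\cM_{I^{\flat}}$; everything else (module-theoretic formalities, compatibility with the right Hecke action, passing from $\Z$ to $\Z^q$) is routine.
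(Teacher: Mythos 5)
Your overall strategy --- test elements of the Iwahori--Hecke algebra against the basis $\{v_{w,\a}^{\flat}\}$ of Proposition~\ref{UoperatorsI2021uop} and show the matrix of $\hbar_{I^{\flat}}$ is triangular with invertible diagonal --- is the standard proof of this freeness statement, and the leading-term identity you single out does go through: for $w\in W_{a_\circ}$ one has $(v_{1,\a}^{\flat})^{T_w}=c_w\,v_{w,\a}^{\flat}+(\text{terms }v_{w',\a}^{\flat},\ w'<w)$, the translation components being forced to vanish because the support lies in $M^{\flat}U^{+}K^{\flat}$; be aware, though, that the diagonal coefficients $c_w$ are in general powers of $q$ rather than literally $1$, which is harmless over $\Z^q$ but should be stated as such. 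Note also that the paper gives no argument of its own here --- the proof is a citation of the companion paper (Theorem ``structureMI'' of the prequel) --- so there is no in-paper proof to match; your sketch is presumably close to what that reference does.

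The one genuine issue is logical order: both of your routes take as input the Bernstein presentation of $\cH_{I^{\flat}}(\Z^q)$, i.e.\ the unique expressibility of every element as $\sum_{w\in W_{a_\circ}}\dot{\Theta}^{\flat}(r_w)\,T_w$. But in this paper and its prequel the Bernstein embedding $\dot{\Theta}_{\mathrm{Bern}}^{\flat}$ is \emph{defined as a consequence of} Proposition~\ref{MiHirk1}: it is extracted from the freeness of $\cM_{I^{\flat}}(\Z^q)$ over the Iwahori--Hecke algebra (see the paragraph following the proposition). As written, your argument is therefore circular inside this framework, and the presentation you invoke is a theorem of essentially the same depth as the statement to be proved. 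Two repairs are available: (i) import the Bernstein--Lusztig presentation from an independent source valid in this generality (e.g.\ Vign\'eras' integral Bernstein presentation for general reductive $p$-adic groups), observing that the elements $i_{m+m_\circ}^{\flat}(i_{m_\circ}^{\flat})^{-1}$ can be defined inside $\cH_{I^{\flat}}(\Z^q)$ without reference to $\cM_{I^{\flat}}$; or, cleaner and self-contained, (ii) run your triangularity argument directly with the Iwahori--Matsumoto basis $\{T_x\colon x\in N/M^{\flat}\}$ of the source against the full $\Z^q$-basis $\{v_{mw,\a}^{\flat}\}$ of the target, ordered by the Bruhat order on $\widetilde{W}^{\flat}\cong\Lambda_M^{\flat}\rtimes W_{a_\circ}$; the diagonal entries are again powers of $q$, units in $\Z^q$, and no Bernstein input is needed. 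With either repair the proof is complete; the remaining points (compatibility with the right Hecke action, the equal-rank surjectivity trick in your second route) are routine, as you say.
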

\begin{proof}
This is \cite[Theorem \ref{structureMI}]{UoperatorsI2021}.
\end{proof}
In other words, the right $(\End_{\Z^q[G]}(\Z^q[G/I^{\flat}])] ({\Z^q}))^{\text{opp}}$-module $\cM_{I^{\flat}}({\Z^q})$ is free of rank $1$. 
Now, we have by definition an embedding
$$\mathcal{R}^{\flat} \hra  \text{End}_{(\End_{\Z[G]}(\Z[G/I^{\flat}]))^{\text{opp}}}(\cM_{{I^{\flat}}}(\Z)),$$
Accordingly, given Proposition \ref{MiHirk1}, one gets an embedding of $\Z^q$-algebras:
\begin{align*}\dot{\Theta}_{\text{\text{Bern}}}^{\flat}\colon \mathcal{R}^{\flat}\hookrightarrow \End_{\Z^q[G]}(\Z^q[G/I^{\flat}]),\quad m \mapsto \dot{\Theta}_m^{\flat}, \end{align*}
characterized by the property: $m\cdot v_{1,\a}^{\flat}= (v_{1,\a}^{\flat} )^{ \dot{\Theta}_m^{\flat}}$, for any $m\in \mathcal{R}^{\flat}$. 
\begin{lemma}
For any $m\in  \Lambda_M^{\flat}$ and any $m_\circ\in \Lambda_M^{-,\flat}$ such that $m+m_\circ \in \Lambda_M^{-,\flat}$,  
we have
$$\dot{\Theta}_m^{\flat}= i_{m+m_\circ}^{\flat}\circ (i_{m_\circ}^{\flat})^{-1}= (i_{m_\circ}^{\flat})^{-1}  \circ i_{m+m_\circ}^{\flat} \quad \text{ in } \End_{\Z^q[G]}(\Z^q[G/I^{\flat}]).$$
In particular, $\dot{\Theta}_m^{\flat}=i_m^{\flat}$ if $m\in \Lambda_M^{-,\flat}$ and $\dot{\Theta}_m^{\flat}=(i_{-m}^{\flat})^{-1}$ if $m\in \Lambda_M^{+,\flat}$. 
Here, $i_{m}^{\flat}\in \cH_{I^\flat}$ denotes also (by abuse of notation) the corresponding operator in $\End_{\Z^q[G]}(\Z^q[G/I^{\flat}])$. 
\end{lemma}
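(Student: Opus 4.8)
The plan is to reduce everything to one special case — call it $(\dagger)$ — namely that $\dot{\Theta}^{\flat}_{m_\circ}=i_{m_\circ}^{\flat}$ for every $m_\circ\in\Lambda_M^{-,\flat}$; the rest is then formal. Recall that $\dot{\Theta}^{\flat}_{\mathrm{Bern}}\colon\mathcal{R}^{\flat}\hookrightarrow\End_{\Z^q[G]}(\Z^q[G/I^{\flat}])$ is a homomorphism of $\Z^q$-algebras and that $\mathcal{R}^{\flat}=\Z[\Lambda_M^{\flat}]$ is the group algebra of the \emph{group} $\Lambda_M^{\flat}$; hence every basis element $m_\circ$ is a unit of $\mathcal{R}^{\flat}$, so by $(\dagger)$ the operator $i_{m_\circ}^{\flat}=\dot{\Theta}^{\flat}_{m_\circ}$ is a unit of $\End_{\Z^q[G]}(\Z^q[G/I^{\flat}])$ with inverse $\dot{\Theta}^{\flat}_{-m_\circ}$. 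For $m\in\Lambda_M^{\flat}$ and $m_\circ,\,m+m_\circ\in\Lambda_M^{-,\flat}$ as in the statement, multiplicativity of $\dot{\Theta}^{\flat}$ together with $(\dagger)$ gives
$$i_{m+m_\circ}^{\flat}=\dot{\Theta}^{\flat}_{m+m_\circ}=\dot{\Theta}^{\flat}_{m}\circ\dot{\Theta}^{\flat}_{m_\circ}=\dot{\Theta}^{\flat}_{m}\circ i_{m_\circ}^{\flat};$$
since $i_{m_\circ}^{\flat}$ and $i_{m+m_\circ}^{\flat}$ both lie in the image of the commutative ring $\mathcal{R}^{\flat}$ they commute, and composing with $(i_{m_\circ}^{\flat})^{-1}$ yields $\dot{\Theta}^{\flat}_{m}=i_{m+m_\circ}^{\flat}\circ(i_{m_\circ}^{\flat})^{-1}=(i_{m_\circ}^{\flat})^{-1}\circ i_{m+m_\circ}^{\flat}$. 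The two ``in particular'' claims are the cases $m_\circ=0$ (then $m\in\Lambda_M^{-,\flat}$ and $i_0^{\flat}={\bf 1}_{I^{\flat}}$ is the identity) and $m_\circ=-m$ (then $m\in\Lambda_M^{+,\flat}$ forces $-m\in\Lambda_M^{-,\flat}$ with $m+(-m)=0$).

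To prove $(\dagger)$ I would argue as follows. By the defining property $m_\circ\cdot v_{1,\a}^{\flat}=(v_{1,\a}^{\flat})^{\dot{\Theta}^{\flat}_{m_\circ}}$ and the freeness of the $\End_{\Z^q[G]}(\Z^q[G/I^{\flat}])$-module $\cM_{I^{\flat}}(\Z^q)$ on the generator $v_{1,\a}^{\flat}$ (Proposition~\ref{MiHirk1}), the claim $\dot{\Theta}^{\flat}_{m_\circ}=i_{m_\circ}^{\flat}$ is equivalent to the single identity $(v_{1,\a}^{\flat})^{i_{m_\circ}^{\flat}}=m_\circ\cdot v_{1,\a}^{\flat}$ in $\cM_{I^{\flat}}(\Z)=\Z[M^{\flat}U^+\backslash G/I^{\flat}]$. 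For the left-hand side one unwinds the right $\cH_{I^{\flat}}(\Z)$-convolution action: because $m_\circ\in M^-$ normalizes both $U^+$ and $M^{\flat}$ and satisfies $m_\circ I^+ m_\circ^{-1}\subset I^+$, one has $I^{\flat}m_\circ I^{\flat}=\bigsqcup_{u\in I^+/m_\circ I^+ m_\circ^{-1}}u\,m_\circ I^{\flat}$ (as in the proof of Lemma~\ref{Uaction} with $\cF=\a$), and for each $u\in I^+\subset U^+$ the double coset $M^{\flat}U^+ u\,m_\circ I^{\flat}$ equals $M^{\flat}U^+ m_\circ I^{\flat}=m_\circ\,M^{\flat}U^+ I^{\flat}$, that is, it represents $m_\circ\cdot v_{1,\a}^{\flat}$. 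So the terms collapse onto $m_\circ\cdot v_{1,\a}^{\flat}$ with total multiplicity $[I^+:m_\circ I^+ m_\circ^{-1}]\in q^{\Z_{\ge0}}$; this multiplicity (a value governed by $\delta_B(m_\circ)$) is exactly absorbed by the twist built into the normalization of the basis $\{v_{mw,\a}^{\flat}\}$ of $\cM_{I^{\flat}}$ — equivalently, into the convolution action — fixed in \cite{UoperatorsI2021}, which is precisely what the ``twisting'' in the twisted Bernstein homomorphism is designed to cancel; hence the identity holds exactly.

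The genuine obstacle is this last step: carrying the $\delta_B$-factors through the double-coset computation and matching them against the precise definitions of the basis of $\cM_{I^{\flat}}$ and of its $\cH_{I^{\flat}}(\Z)$-module structure recorded in \cite{UoperatorsI2021}; granting $(\dagger)$, everything else uses only that $\Lambda_M^{\flat}$ is a group and that $\dot{\Theta}^{\flat}$ is a ring homomorphism. Alternatively — and perhaps more cleanly, since it sidesteps the explicit convolution — one could obtain $(\dagger)$ from the compatibility of the twisted Satake and twisted Bernstein isomorphisms established in \cite{UoperatorsI2021}, which determines $\dot{\Theta}^{\flat}_{m_\circ}$ on antidominant $m_\circ$ directly.
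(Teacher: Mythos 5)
Your formal reduction is fine: granted the key claim $(\dagger)$ that $\dot{\Theta}^{\flat}_{m_\circ}=i^{\flat}_{m_\circ}$ for $m_\circ\in\Lambda_M^{-,\flat}$, the group-algebra/ring-homomorphism argument (invertibility of $\dot{\Theta}^{\flat}_{m_\circ}$, commutativity of the image of $\mathcal{R}^{\flat}$, and the two special cases $m_\circ=0$ and $m_\circ=-m$) gives the lemma exactly as stated, and this is indeed the expected structure (the paper itself simply cites the companion paper for the whole statement). The problem is that your proof of $(\dagger)$ — which you correctly identify as the entire content — is wrong at its central step. You compute $(v_{1,\a}^{\flat})^{i^{\flat}_{m_\circ}}$ by pushing forward the coset decomposition $I^{\flat}m_\circ I^{\flat}=\bigsqcup_{u\in I^+/m_\circ I^+m_\circ^{-1}}u\,m_\circ I^{\flat}$ and conclude that the terms collapse onto $m_\circ\cdot v_{1,\a}^{\flat}$ with multiplicity $[I^+:m_\circ I^+m_\circ^{-1}]$, to be ``absorbed by the twist''. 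There is no such mechanism: in this paper the basis $\{v^{\flat}_{mw,\cF}\}$, the left $\mathcal{R}^{\flat}$-action ($m'\cdot v^{\flat}_{mw}=v^{\flat}_{m'mw}$) and the characterization $m\cdot v_{1,\a}^{\flat}=(v_{1,\a}^{\flat})^{\dot{\Theta}^{\flat}_m}$ are all untwisted; the dot-twist $c(m,w)$ enters only in the $W$-action on $\mathcal{R}^{\flat}$ (hence in the Satake image), not in the module structure of $\cM_{I^{\flat}}$. If the multiplicity really were $[I^+:m_\circ I^+m_\circ^{-1}]$, the lemma would simply be false.

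What actually happens is that the multiplicity never appears, because the module structure is convolution of functions on $M^{\flat}U^+\backslash G$, not pushforward of cosets from $\Z[G/I^{\flat}]$. Concretely, $(v_{1,\a}^{\flat}*_{I^{\flat}}\mathbf{1}_{I^{\flat}m_\circ I^{\flat}})(m_\circ)$ equals the number of cosets $xI^{\flat}$ contained in $M^{\flat}U^+I^{\flat}\cap m_\circ I^{\flat}m_\circ^{-1}I^{\flat}$; writing $I^{\flat}=I^+M^{\flat}I^-$ and using $m_\circ^{-1}I^-m_\circ\subset I^-$ (the negative-side counterpart of the antidominance you used), the cosets of $m_\circ I^{\flat}m_\circ^{-1}I^{\flat}$ modulo $I^{\flat}$ are represented by $m_\circ I^-m_\circ^{-1}/I^-$, and only the trivial class meets $M^{\flat}U^+I^{\flat}$ (by the disjointness of the cells underlying the basis of $\cM_{I^{\flat}}$); since the support of the convolution is the single double coset $M^{\flat}U^+m_\circ I^{\flat}$, one gets $(v_{1,\a}^{\flat})^{i^{\flat}_{m_\circ}}=v^{\flat}_{m_\circ,\a}=m_\circ\cdot v_{1,\a}^{\flat}$ on the nose, with no $q$-powers to cancel (one can check this by hand for $\GL_2$ with $m_\circ=\mathrm{diag}(p,1)$). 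So the gap is a genuine computational one, and your fallback suggestion does not close it either: the compatibility of Lemma \ref{intcomp} only concerns the restriction of $\dot{\Theta}^{\flat}_{\mathrm{Bern}}$ to ${\mathcal{R}^{\flat}}^{\dot{W}}$, hence cannot determine $\dot{\Theta}^{\flat}_{m_\circ}$ for an individual antidominant $m_\circ$ (and in the paper's logic that compatibility is downstream of the present lemma, not a source for it).
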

\begin{proof}
This is \cite[Lemma \ref{explicittheta}]{UoperatorsI2021}. 
\end{proof}
\subsubsection{Compatibility}

\begin{lemma}[Integral compatibility]\label{intcomp}
The Satake and Bernstein twisted $\Z$-homorphisms are compatible, i.e.,
the following diagram (of $\Z$-modules) is commutative:
$$\begin{tikzcd}  \End_{\Z[G]}(\Z[G/K^{\flat}]) \otimes_\Z\Z^q\arrow[hook]{rr}{\dot{\cS}_M^{G^{\flat}} \otimes \text{Id}}&& {\mathcal{R}^{\flat}}^{\dot{W}}\otimes_\Z\Z^q\arrow[hook]{dl}{\dot\Theta_{Bern}^{\flat}}\\
&Z(\End_{\Z^q[G]}(\Z^q[G/I^{\flat}]))\arrow{lu}{\overline{\pi}_{\a}}&
\end{tikzcd}$$
In particular $\pi_\a \circ (\dot\Theta_{Bern}^{\flat})|_{{\mathcal{R}^\flat}^{\dot{W}}}$ is the inverse of $\dot{\cS}_M^{G^{\flat}}$. 
\end{lemma}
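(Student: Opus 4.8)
The plan is to recognise this as the twisted, integral incarnation of the compatibility between the Satake and Bernstein isomorphisms -- essentially a repackaging of \cite[Theorem \ref{compatibility}]{UoperatorsI2021} -- and to reduce the commutativity of the triangle to an identity one can read off the universal unramified principal series, where all three arrows carry explicit characterizing properties. First I would record three facts already in hand: $\dot{\cS}_M^{G^{\flat}}$ is an isomorphism onto ${\mathcal{R}^{\flat}}^{\dot{W}}$ (Theorem \ref{integralsatake}); $\dot\Theta_{Bern}^{\flat}$ carries ${\mathcal{R}^{\flat}}^{\dot{W}}\otimes_\Z\Z^q$ into the centre $Z(\End_{\Z^q[G]}(\Z^q[G/I^{\flat}]))$, by the Bernstein relations of \cite{UoperatorsI2021}; and, via Remark \ref{compatibilitybis}, the arrow $\overline{\pi}_{\a}$ is the excursion pairing $f\mapsto(x\mapsto\pi_{\a}\circ f\circ s_{\a}(x))$ restricted to that centre. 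With these in place it suffices to prove $\overline{\pi}_{\a}\circ\dot\Theta_{Bern}^{\flat}\circ(\dot{\cS}_M^{G^{\flat}}\otimes\text{Id})=\text{Id}$.

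Concretely, I would fix $h\in\End_{\Z[G]}(\Z[G/K^{\flat}])\otimes_\Z\Z^q$ and set $m:=\dot{\cS}_M^{G^{\flat}}(h)\in{\mathcal{R}^{\flat}}^{\dot{W}}\otimes_\Z\Z^q$. Granting structural fact (i) below, $\overline{\pi}_{\a}(\dot\Theta_m^{\flat})=\pi_{\a}\circ\dot\Theta_m^{\flat}\circ s_{\a}$ is again a $G$-equivariant endomorphism of $\Z^q[G/K^{\flat}]$; since $\cM_{K^{\flat}}(\Z^q)$ is free of rank one over $\mathcal{R}^{\flat}\otimes\Z^q$ on the generator $v_{1,K}^{\flat}$ (Proposition \ref{UoperatorsI2021uop}) and is faithful over $\End_{\Z^q[G]}(\Z^q[G/K^{\flat}])$ through the injective map $\dot{\cS}_M^{G^{\flat}}$ (Theorem \ref{integralsatake}), it is enough to check that $\overline{\pi}_{\a}(\dot\Theta_m^{\flat})$ and $h$ act on $v_{1,K}^{\flat}$ by the same element, namely $m$. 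For $h$ this is the defining property of the Satake transform, $(v_{1,K}^{\flat})^{h}=m\cdot v_{1,K}^{\flat}$. For $\overline{\pi}_{\a}(\dot\Theta_m^{\flat})$ I would transport the computation along the $\mathcal{R}^{\flat}$-equivariant projection $\cM_{I^{\flat}}(\Z)\twoheadrightarrow\cM_{K^{\flat}}(\Z)$ induced by $G/I^{\flat}\to G/K^{\flat}$: it sends $v_{1,\a}^{\flat}\mapsto v_{1,K}^{\flat}$ and is compatible with the coinvariants maps $\Z[G/I^{\flat}]\to\cM_{I^{\flat}}(\Z)$, $\Z[G/K^{\flat}]\to\cM_{K^{\flat}}(\Z)$ and with $\pi_{\a}$ and $s_{\a}$. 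Applying it to the defining property of the Bernstein operator, $(v_{1,\a}^{\flat})^{\dot\Theta_m^{\flat}}=m\cdot v_{1,\a}^{\flat}$, shows that the image of $\overline{\pi}_{\a}(\dot\Theta_m^{\flat})(\mathbf{1}_{K^{\flat}})=\pi_{\a}(\dot\Theta_m^{\flat}(\mathbf{1}_{I^{\flat}}))$ in $\cM_{K^{\flat}}(\Z^q)$ equals $m\cdot v_{1,K}^{\flat}$, i.e. $\overline{\pi}_{\a}(\dot\Theta_m^{\flat})$ also acts on $v_{1,K}^{\flat}$ by $m$. Hence $\overline{\pi}_{\a}(\dot\Theta_m^{\flat})=h$ and the triangle commutes; the ``in particular'' clause then follows from the bijectivity of $\dot{\cS}_M^{G^{\flat}}$.

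The hard part is not this diagram chase but the two structural inputs it rests on, both of which are precisely the content of \cite[Theorem \ref{compatibility}]{UoperatorsI2021}: (i) that $\overline{\pi}_{\a}$ is well defined on the centre of the Iwahori--Hecke algebra and produces genuinely $G$-equivariant operators on $\Z^q[G/K^{\flat}]$ -- equivalently, that the failure of the section $s_{\a}$ to be $G$-equivariant is absorbed once one composes with a central Bernstein operator -- and (ii) that the projection $\cM_{I^{\flat}}\twoheadrightarrow\cM_{K^{\flat}}$ is simultaneously equivariant for the left $\mathcal{R}^{\flat}$-actions and compatible with $\pi_{\a}$ and $s_{\a}$, so that the Satake and Bernstein characterizing relations can legitimately be compared on the single generator $v_{1,K}^{\flat}$. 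The remaining point is the coefficient bookkeeping already visible in the statement: the Satake side is integral (Theorem \ref{integralsatake}) whereas $\dot\Theta_{Bern}^{\flat}$ exists only after inverting $q$ (Proposition \ref{MiHirk1}), which is why the whole triangle is formulated over $\Z^q=\Z[q^{-1}]$.
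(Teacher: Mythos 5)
Your argument is sound, and it is worth noting that the paper itself offers no argument at this point: its proof of Lemma \ref{intcomp} is a bare citation of \cite[Lemma \ref{Zcompatibility}]{UoperatorsI2021}, with the ``in particular'' clause deduced from Theorem \ref{integralsatake} exactly as you do. What you have written is essentially a reconstruction of the underlying compatibility proof (the standard Haines--Kottwitz--Prasad-style argument): reduce to showing $\overline{\pi}_{\a}\circ\dot\Theta_{Bern}^{\flat}\circ\dot{\cS}_M^{G^{\flat}}=\mathrm{Id}$, then compare both operators through their right action on the rank-one generator $v_{1,K}^{\flat}$ of $\cM_{K^{\flat}}$, transporting the Bernstein characterizing identity on $v_{1,\a}^{\flat}$ along the $\mathcal{R}^{\flat}$-equivariant projection $\cM_{I^{\flat}}\twoheadrightarrow\cM_{K^{\flat}}$, and conclude by injectivity of the Satake map. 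You correctly isolate the two places where real content is needed and cannot be obtained by diagram chasing alone: that $\dot\Theta_{Bern}^{\flat}$ sends ${\mathcal{R}^{\flat}}^{\dot W}\otimes\Z^q$ into the centre, so that $\overline{\pi}_{\a}$ of such an element is genuinely $G$-equivariant (this is what legitimizes comparing with $h$ on a single vector), and that the projection of principal-series modules is compatible with $\pi_{\a}$, $s_{\a}$ and the $\mathcal{R}^{\flat}$-actions; both are indeed supplied by the companion paper, so your reliance on it is of the same nature as the paper's, just at a finer level of granularity. The only point I would make explicit if this were written out is the normalization bookkeeping identifying $\overline{\pi}_{\a}(f)$, defined via $f\mapsto f*_{I^{\flat}}\mathbf{1}_{K^{\flat}}$ on double cosets, with the operator $x\mapsto\pi_{\a}(f(s_{\a}(x)))$ of Remark \ref{compatibilitybis}; the factors of $[K^{\flat}:I^{\flat}]$ cancel precisely because $f*_{I^{\flat}}\mathbf{1}_{K^{\flat}}$ is left $K^{\flat}$-invariant for central $f$, but this deserves a line rather than being taken for granted.
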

\begin{proof}
This is \cite[Lemma \ref{Zcompatibility}]{UoperatorsI2021}. The second claim follows from Theorem \ref{integralsatake}.
\end{proof}
\begin{remark}
The morphism $\overline{\pi}_\a$ in Lemma \ref{intcomp} above, is the restriction of the composition of the following morphisms of $\Z^q$-modules
$$\begin{tikzcd}[row sep= tiny] \End_{\Z^q[G]}(\Z^q[G/I^{\flat}]) \arrow{r}{}[swap]{\simeq}& \Z^q[I^\flat \backslash G/I^\flat]  \arrow{r}{{\pi}_\a}[swap]{\simeq}&\Z^q[I^\flat \backslash G/K^{\flat}],\end{tikzcd}$$
to the center $Z( \End_{\Z^q[G]}(\Z^q[G/I^{\flat}])) $. The right hand side arrow, being the natural extension of the projection map $\pi_\a\colon \Z[G/K_\cF^\flat] \to \Z[G/K^\flat]$. 
\end{remark}
\begin{remark}[Non-commutativity of $\mathbb{U}^\flat$ with $\cH_{K^\flat}(\Z)$]
The non-commutativity of the $\mathbb{U}^\flat$ and the $\flat$-special Hecke algebras can be tracked to the fact that although the natural projection map $\pi_\a \colon \Z^q[G/I^{\flat}] \to \Z^q[G/K^{\flat}]$ yields a morphism of algebras for the center $Z( \End_{\Z^q[G]}(\Z^q[G/I^{\flat}])) \iso \End_{\Z^q[I^\flat]}(\Z^q[G/K^{\flat}])$, 
it only extends to a $\Z^q$-modules morphism $\pi_\a \colon \End_{\Z^q[G]}(\Z^q[G/I^{\flat}]) \to \End_{\Z^q[I^\flat]}(\Z^q[G/K^{\flat}])$. 
\end{remark}

\subsection{Integrality}
\begin{lemma}\label{integrality}\label{Zintegrality}
The subring $ {\dot{\Theta}_{\text{\emph{Bern}}}}(\mathcal{R}^{\flat}) \subset \cH_{I^{\flat}}(\Z^q)$ is integral over ${\dot{\Theta}_{\text{\emph{Bern}}}}({{\mathcal{R}^{\flat}}^{\dot{W}}})\subset Z\left(\cH_{I^{\flat}}({\Z^q})\right)$.
\end{lemma}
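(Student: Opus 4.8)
The plan is to observe that, after inverting $q$, the assertion is an instance of the classical fact that a commutative ring is module-finite (hence integral) over its subring of invariants under a finite group, applied to the twisted Weyl action $\dot{W}$ on $\mathcal{R}^{\flat}$, and then to transport the conclusion along the Bernstein embedding using the Satake compatibility.

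First I would record that over $\Z^q$ the dot-action makes the finite group $W$ act on $\mathcal{R}^{\flat}\otimes_{\Z}\Z^q$ by $\Z^q$-algebra automorphisms: since $\delta_B$ is a character of $\Mbf(F)$, the map $m\mapsto c(m,w)$ is a homomorphism $\Lambda_M\to(\Z^q)^{\times}$ — its values are integral powers of $q$, hence units of $\Z^q$, cf. \cite[Lemma \ref{crintegral}]{UoperatorsI2021} — so each $\dot{w}$ is multiplicative on basis elements, hence a ring automorphism, while the cocycle identity $c(m,w'w)=c(w(m),w')\,c(m,w)$ gives $\dot{w'}\circ\dot{w}=\dot{w'w}$ and $\dot{1}=\id$. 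As $\Z^q$ is $\Z$-flat, forming $\dot{W}$-invariants commutes with $-\otimes_{\Z}\Z^q$, so $(\mathcal{R}^{\flat}\otimes_{\Z}\Z^q)^{\dot{W}}={\mathcal{R}^{\flat}}^{\dot{W}}\otimes_{\Z}\Z^q$, and by Lemma \ref{intcomp} the Bernstein map $\dot{\Theta}_{\text{Bern}}^{\flat}$ sends this ring into $Z(\cH_{I^{\flat}}(\Z^q))$.

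Next, for each $\lambda\in\Lambda_M^{\flat}$, regarded as a basis element of $\mathcal{R}^{\flat}$, I would form the monic polynomial
$$P_\lambda(X):=\prod_{w\in W}\bigl(X-\dot{w}(\lambda)\bigr).$$
Its coefficients are, up to sign, the elementary symmetric functions of the finite $\dot{W}$-stable family $\{\dot{w}(\lambda)\}_{w\in W}$, hence are $\dot{W}$-invariant, so $P_\lambda\in\bigl((\mathcal{R}^{\flat}\otimes_{\Z}\Z^q)^{\dot{W}}\bigr)[X]$, and $P_\lambda(\lambda)=0$ since the factor indexed by $w=1$ vanishes. (When $\lambda\in\Lambda_M^{-,\flat}$ one has $c(\lambda,w)\in q^{\Z_{\ge 0}}\subset\Z$, so these coefficients already lie in the $\Z$-form ${\mathcal{R}^{\flat}}^{\dot{W}}$; this is the case that intervenes in Theorem \ref{integralitiyoperatorsu}.) As $\Lambda_M^{\flat}$ is finitely generated, finitely many such $\lambda$ together with their inverses generate $\mathcal{R}^{\flat}$ as a ${\mathcal{R}^{\flat}}^{\dot{W}}$-algebra, and each of them is integral over ${\mathcal{R}^{\flat}}^{\dot{W}}\otimes_{\Z}\Z^q$; hence $\mathcal{R}^{\flat}\otimes_{\Z}\Z^q$ is module-finite, in particular integral, over ${\mathcal{R}^{\flat}}^{\dot{W}}\otimes_{\Z}\Z^q$.

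Finally, applying the $\Z^q$-algebra homomorphism $\dot{\Theta}_{\text{Bern}}^{\flat}$ to the relations $P_\lambda(\lambda)=0$ presents each $\dot{\Theta}_{\text{Bern}}^{\flat}(\lambda)$ as a root of the monic polynomial $\dot{\Theta}_{\text{Bern}}^{\flat}(P_\lambda)$, whose coefficients lie in $\dot{\Theta}_{\text{Bern}}^{\flat}\bigl((\mathcal{R}^{\flat}\otimes_{\Z}\Z^q)^{\dot{W}}\bigr)\subset Z(\cH_{I^{\flat}}(\Z^q))$. Since the elements $\dot{\Theta}_{\text{Bern}}^{\flat}(\lambda)$, $\lambda\in\Lambda_M^{\flat}$, span $\dot{\Theta}_{\text{Bern}}^{\flat}(\mathcal{R}^{\flat})$ over $\Z^q$ and integral elements over the commutative (indeed central) subring $\dot{\Theta}_{\text{Bern}}^{\flat}({\mathcal{R}^{\flat}}^{\dot{W}})$ form a subring, the whole of $\dot{\Theta}_{\text{Bern}}^{\flat}(\mathcal{R}^{\flat})$ is integral over $\dot{\Theta}_{\text{Bern}}^{\flat}({\mathcal{R}^{\flat}}^{\dot{W}})$, as claimed. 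The only step requiring genuine care is the first one — verifying that $c(-,w)$ is multiplicative with values in $(\Z^q)^{\times}$ and satisfies the cocycle identity, so that $\dot{W}$ really acts by $\Z^q$-algebra automorphisms — everything afterwards being the standard invariant-theory and functoriality arguments.
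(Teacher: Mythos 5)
Your proposal is correct and follows essentially the same route as the paper: the paper's proof also takes, for each element $m$, the monic orbit polynomial $P_m(X)=\prod_{w\in W/W_m}\bigl(X-\dot{\Theta}_{\text{Bern}}^{\flat}(\dot{w}(m))\bigr)$, whose coefficients are $\dot{W}$-invariant and hence lie in $\dot{\Theta}_{\text{Bern}}^{\flat}({\mathcal{R}^{\flat}}^{\dot{W}})[X]$. Your extra verifications (that $\dot{W}$ acts by $\Z^q$-algebra automorphisms and that invariants commute with the flat base change to $\Z^q$) are details the paper leaves implicit, but the core argument is identical.
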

\begin{proof} It suffices to consider, for any $m \in \mathcal{R}^{\flat}$, the polynomial $P_m(X)=\prod_{w\in W/W_m}(X-\dot{\Theta}_{\text{Bern}}^{\flat}(\dot{w}(m)))$,
where, $W_m$ denotes the isotropy subgroup of $m$ in $W$. 
The polynomial $P_m$ is clearly invariant under the dot-action of $W$, i.e $P_m \in \dot{\Theta}_{\text{Bern}}^{\flat}({\mathcal{R}^{\flat}}^{\dot{W}})[X]$.
\end{proof}
\begin{remark}Note that by definition $P_m$ is the minimal polynomial annihilating $\dot{\Theta}_m^{\flat}$. 
\end{remark}
\begin{theorem}\label{integralitiyoperatorsu}\label{Zintegralitiyoperatorsutilde}
For any ${u} \in \mathbb{U}_\cF^{\flat}$, there exists a monic polynomial $Q_u(X)=\sum {h}_k X^k \in (\End_{\Z[G]}\Z[G/K^{\flat}])[X]$
$$\sum {h}_k\circ  u^k=0, \text{ in } \text{\emph{End}}_{\Z[P_\cF]}\Z[G/K^{\flat}].$$ 
\end{theorem}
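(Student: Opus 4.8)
The plan is to reduce the integrality of $\mathbb{U}^{\flat}$-operators over the special Hecke algebra to the already-established integrality statement \emph{inside the Iwahori--Hecke algebra} (Lemma~\ref{Zintegrality}), using the excursion pairing together with the compatibility of the twisted Satake and Bernstein homomorphisms (Lemma~\ref{intcomp}). First I would reduce to the case $\cF=\a$: by Remark~\ref{functo216} and Remark~\ref{commutflatuop}(ii) the action of $\mathcal{R}_{\cF}^{-,\flat}$ on $\Z[G/K^{\flat}]$ factors through the action of $\mathcal{R}_{\a}^{-,\flat}$, so a polynomial relation for $u\in\mathbb{U}_{\a}^{\flat}$ yields one for its restriction to $\mathbb{U}_{\cF}^{\flat}$; hence it suffices to produce, for each $m\in\Lambda_M^{-,\flat}$, a monic $Q\in(\End_{\Z[G]}\Z[G/K^{\flat}])[X]$ with $\sum_k h_k\circ (u_m^{\flat})^k=0$ in $\End_{\Z[B]}\Z[G/K^{\flat}]$.

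The candidate polynomial is the image of $P_m(X)=\prod_{w\in W/W_m}\bigl(X-\dot{\Theta}_{\mathrm{Bern}}^{\flat}(\dot w(m))\bigr)$ from Lemma~\ref{Zintegrality}. Its coefficients lie in $\dot{\Theta}_{\mathrm{Bern}}^{\flat}({\mathcal{R}^{\flat}}^{\dot W})\subset Z(\End_{\Z^q[G]}(\Z^q[G/I^{\flat}]))$, and by the compatibility diagram of Lemma~\ref{intcomp} the map $\overline{\pi}_\a$ carries this center \emph{isomorphically} onto $\End_{\Z[G]}(\Z[G/K^{\flat}])\otimes_\Z\Z^q$ and is inverse to $\dot{\cS}_M^{G^{\flat}}$; so the coefficients $h_k$ of $\overline{\pi}_\a(P_m)$ are genuine special Hecke operators — a priori in $\End_{\Z[G]}(\Z[G/K^{\flat}])\otimes\Z^q$, and the remaining task is to check they are integral (lie in $\End_{\Z[G]}(\Z[G/K^{\flat}])$), which follows from the first lemma of \S\ref{pairingsection} on the Weyl dot-action: since $m\in M^-$ gives $\dot w(m)\in\mathcal{R}^{-,\flat}$, the elementary symmetric functions of the $\dot{\Theta}_{\mathrm{Bern}}^{\flat}(\dot w(m))$ lie in $\dot{\Theta}_{\mathrm{Bern}}^{\flat}({\mathcal{R}^{\flat}}^{\dot W})\cap \cH_{I^\flat}(\Z)$, which maps into $\cH_{K^{\flat}}(\Z)$ under $\pi_\a$.

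The crux is then to transport the relation $\sum_k \dot{\Theta}_{\mathrm{Bern}}^{\flat}(h_k)\circ(\dot{\Theta}_m^{\flat})^k=0$, which holds in $\End_{\Z^q[G]}(\Z^q[G/I^{\flat}])$, into $\End_{\Z[B]}\Z[G/K^{\flat}]$ and identify the operators correctly. Here I would use that on $G/K^{\flat}$, via the section $s_\a$ and the commutative diagram of Remark~\ref{functo1}, the operator $u_m^{\flat}=\varphi_\a(m)$ is computed by $\pi_\a\circ i_m^{\flat}\circ s_\a$ for $m\in\Lambda_M^{-,\flat}$, and by the explicit Bernstein formula (the lemma giving $\dot{\Theta}_m^{\flat}=i_m^{\flat}$ on $\Lambda_M^{-,\flat}$) this is exactly $\pi_\a\circ\dot{\Theta}_m^{\flat}\circ s_\a$; one checks $\dot{\Theta}_m^{\flat}$ stabilizes $s_\a(\Z[G/K^{\flat}])\subset\Z^q[G/I^{\flat}]$ (up to the projector $-*_{I^\flat}\1_{K^\flat}$), so that powers compose correctly under $\pi_\a$ and $s_\a$, and that the central elements $\dot{\Theta}_{\mathrm{Bern}}^{\flat}(h_k)$ act through their image $h_k$ on $\Z[G/K^{\flat}]$. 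Feeding the Iwahori-level identity through this, and clearing denominators using that $P_m$ already has $\Z$-coefficients after applying $\pi_\a$ (by the dot-action integrality lemma), gives $\sum_k h_k\circ(u_m^{\flat})^k=0$ in $\End_{\Z[B]}\Z[G/K^{\flat}]$. The main obstacle I anticipate is the bookkeeping in this last step: the projection $\pi_\a$ is only a morphism of $\Z^q$-modules on $\End(\Z^q[G/I^{\flat}])$, not of algebras, so one must verify by hand — exploiting that the $h_k$ are \emph{central} at Iwahori level and that $u_m^{\flat}$ for $m\in\Lambda_M^{-,\flat}$ comes from $i_m^{\flat}$ which stabilizes the relevant subspace — that $\pi_\a$ intertwines the composition $\sum h_k\circ(\dot{\Theta}_m^{\flat})^k$ on $\Z^q[G/I^{\flat}]$ with $\sum h_k\circ(u_m^{\flat})^k$ on $\Z^q[G/K^{\flat}]$, and then descend from $\Z^q$ to $\Z$.
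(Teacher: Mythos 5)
Your proposal follows essentially the same route as the paper: reduce to $\cF=\a$ via Remark~\ref{functo216}, take the monic polynomial $P_r$ from Lemma~\ref{Zintegrality} whose coefficients lie in $\dot{\Theta}_{\mathrm{Bern}}^{\flat}({\mathcal{R}^{\flat}}^{\dot W})\subset Z(\cH_{I^\flat}(\Z^q))$, push the coefficients down to genuine integral special Hecke operators via Lemma~\ref{intcomp} and Theorem~\ref{integralsatake} (with the dot-action integrality lemma guaranteeing no denominators), and then transport the Iwahori-level identity to $\Z[G/K^{\flat}]$ using the section/projection and centrality. The ``bookkeeping'' you flag as the main obstacle is exactly where the paper's proof does its only real work, and it is short: by $B$-equivariance (Corollary~\ref{Pinvariance}) one only needs the relation on $\mathbf{1}_{K^{\flat}}$; there $\mathbf{1}_{K^{\flat}}\bullet u^k=\jmath_\a^{\flat}(r)^{*k}*_{I^\flat}\mathbf{1}_{K^{\flat}}$ (this is the content of Lemmas~\ref{facetaddit} and \ref{Uaction}: iterated excursion equals convolution power at Iwahori level, then project), and since each $h_k$ is central in $\cH_{I^\flat}(\Z^q)$ one may slide $\mathbf{1}_{K^{\flat}}$ past it; using $\mathbf{1}_{K^{\flat}}*_{I^\flat}\mathbf{1}_{K^{\flat}}=[K^{\flat}:I^{\flat}]\,\mathbf{1}_{K^{\flat}}$ the whole sum collapses to $P_r(u)*_{I^\flat}\mathbf{1}_{K^{\flat}}=0$. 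So your anticipated difficulty is resolved precisely by the two mechanisms you name (centrality and the action lemma); nothing further is needed.

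One genuine blemish: your opening reduction ``it suffices to produce a monic polynomial for each $m\in\Lambda_M^{-,\flat}$'' is not justified. The theorem concerns an arbitrary $u\in\mathbb{U}_\cF^{\flat}$, and because the coefficients $h_k$ act on one side and do \emph{not} commute with $\mathbb{U}^{\flat}$ inside $\End_{\Z[B]}\Z[G/K^{\flat}]$, you cannot invoke the usual closure of integral elements under sums: integrality of the generators $u_m^{\flat}$ does not formally give integrality of a $\Z$-linear combination. Fortunately the reduction is also unnecessary: Lemma~\ref{Zintegrality} is stated for an arbitrary element of $\mathcal{R}^{\flat}$, the dot-action lemma gives $\dot w(\mathcal{R}^{-,\flat})\subset\mathcal{R}^{\flat}$ for the whole cone, and the identification $\jmath_\a^{\flat}(r)=\dot\Theta_r^{\flat}$ for $r\in\mathcal{R}_\a^{-,\flat}$ follows by linearity from the case of basis elements. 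So run your argument verbatim with a general $r\in\mathcal{R}_\a^{-,\flat}$ (as the paper does), and the proof is complete.
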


\begin{proof}
Let $r\in \mathcal{R}_{\cF}^{-,\flat}$. By Remark \ref{functo216}, as far as we are concerned with the excursion pairing, it is equivalent to view $r$ in $\mathcal{R}_{\a}^{-,\flat}$. 
Since $ \jmath_\a^\flat ( \mathcal{R}_{\a}^{-,\flat} )=\dot{\Theta}_{\text{\emph{Bern}}}^{\flat}  ( \mathcal{R}_{\a}^{-,\flat} )\subset   {\dot{\Theta}_{\text{\emph{Bern}}}^{\flat}}(\mathcal{R}^{\flat})$, by Lemma \ref{Zintegrality}, there exist a monic polynomial $P_r(X)=\sum_k X^k h_k \in {\dot{\Theta}_{\text{\emph{Bern}}}^{\flat}}({{\mathcal{R}^{\flat}}^{\dot{W}}})[X]$ having $u=\jmath_\a^{\flat}(r)$ as a root (set $h_n ={\bf 1}_{I^\flat}$). 

By Theorem \ref{integralsatake} and Lemma \ref{intcomp}, we have $\bar{h}_k:= \overline{\pi}_\a(h_k)= {h_k}*_{I^\flat}\mathbf{1}_{K^\flat}\in \cH_{K^\flat}(\Z)$ . 
By $B$-equivariance (Corollary \ref{Pinvariance}), it suffices to show that $\sum_k  ({\mathbf{1}_{K^\flat}} \bullet {u^k})*_{K^\flat}\bar{h}_k=0$. 
We have
\begin{align*}
\sum_k  ({\mathbf{1}_{K^\flat}} \bullet {u^k})*_{K^\flat}\bar{h}_k&=\sum_k  ({\mathbf{1}_{K^\flat}} \bullet {u^k})*_{K^\flat}({h_k*_{I^\flat}\mathbf{1}_K})\\
&=[K^\flat:I^\flat]^{-1}\sum_k  ({\mathbf{1}_{K^\flat}} \bullet {u^k})*_{I^\flat}({h_k*_{I^\flat}\mathbf{1}_{K^\flat}}) \\\
&=[K^\flat:I^\flat]^{-1}\sum_k  ( {u^k}*_{I^\flat} \mathbf{1}_{K^\flat}) *_{I^\flat}({h_k*_{I^\flat}\mathbf{1}_{K^\flat}})\\
&=[K^\flat:I^\flat]^{-1}(\sum_k  {u^k} *_{I^\flat} h_k)*_{I^\flat}\mathbf{1}_{K^\flat}*_{I^\flat}\mathbf{1}_{K^\flat}\\
&= P_r(u)*_{I^\flat}\mathbf{1}_K\\
&=0 \in \cC_c(G/K^\flat,\Z).
\end{align*} 
Therefore: $\sum \bar{h}_k\circ u^k =0 \in \End_{\Z[P_\cF]}\Z[G/K^{\flat}],\text{ 
with }\bar{h}_k \in \End_{\Z[G]}\Z[G/K^{\flat}]$.  
\end{proof}

\subsection{Example \texorpdfstring{$\textbf{GL}_2$}{GL2}}

\begin{example}
Let us make some verification by hand for $\G=\textbf{GL}_{2,\Z_p}$. Let $T$ be the diagonal matrices, $B$ the Borel subgroup of upper triangular matrices, $I$ the corresponding Iwahori subgroup $\left\{\begin{pmatrix}*&*\\ p*&*\end{pmatrix}\subset K=\G(\Z_p)\right\}$. 

Set $t_{m,n}:=\begin{pmatrix}p^{m}&\\ &p^{n}\end{pmatrix}$ and $u_\ell:=\begin{pmatrix}1&\ell\\ &1\end{pmatrix}$ for any $\ell,m,n\in \Z_p$. 
We have $It_{1,0}I= \sqcup_{\ell \in \Z_p/ p\Z_p}u_\ell t_{1,0} I$ and so 
$\mathbf{1}_{bK}\bullet \mathbf{1}_{It_{1,0}I} 
=\sum_{a\in \Z_p/ p\Z_p} \mathbf{1}_{bu_\ell t_{1,0} K}$, for any $b\in B$.
Therefore,
\begin{equation*}\label{14}\mathbf{1}_{bK}\bullet \mathbf{1}_{It_{1,0}I}=\mathbf{1}_{bK}*_K T_p-\mathbf{1}_{b t_{0,1} K},\end{equation*} 
where, $T_p=\mathbf{1}_{Kt_{1,0}K}$. 
Accordinly,
\begin{align*}\mathbf{1}_{bK}\bullet \mathbf{1}_{It_{1,0}I}^2
&=(\sum_{\ell\in \Z_p/ p\Z_p} \mathbf{1}_{bu_\ell t_{1,0}K})\bullet\mathbf{1}_{It_{1,0}I}\\
&=\sum_{\ell\in \Z_p/ p\Z_p} \mathbf{1}_{bu_\ell t_{1,0} K}*_K T_p-\mathbf{1}_{b pu_\ell K}\\
&=(\mathbf{1}_{bK}\bullet \mathbf{1}_{It_{1,0}I})*_K T_p-p\mathbf{1}_{bK}*_KS_p
\end{align*}
where, $S_p=\mathbf{1}_{p K}\in \cH_K(\Z)$. 
In conclusion, we see that the $\bullet$-action defines an operator $u\in\End_{\Z[G]}\Z[G/K]$ corresponding to $\mathbf{1}_{It_{1,0}I}\in \cH_I(\Z)$ such that $$u^2- T_p \circ u+p S_p=0,$$
where this time, $T_p,S_p\in \End_{\Z[G]}\Z[G/K]$ are the Hecke operators corresponding to $t_{1,0}$ and $t_{1,1}$, respectively. 
Note that $T_p \circ u \neq u\circ T_p  $ and $u^2- u\circ T_p+p S_p\neq 0$.
\end{example}

\section{A geometric/combinatorial approach}\label{retractionsanduoperators}
The goal of this section is to translate the purely group theoretic ring $\mathbb{U}^{1}$ into a more combinatorial fashion, by describing its induced action on the extended building. 
This will provide a new class of geometric operators on the set of special vertices justifying why $\mathbb{U}^1$-operators may be thought of as a conceptual generalization of the successor operators for trees with a marked end. 

To the fixed maximal split torus $\Sbf$ is attached an apartment $\cA$ in the reduced building $\cB_{\red}=\cB_{\red}(\G,F)$ and an apartment  $\cA_{\ext}=\cA \times V_G$ in the extended building $\cB_{\ext}=\cB_{\ext}(\G,F)$. 
Recall that ${I^1}$ and ${K^1}$ are the $G^1$-fixator of the alcove $\a\subset \cA$ and the special point $a_\circ\in \overline{\a}$, respectively. 
We refer the reader to \cite[\S 2]{UoperatorsI2021} for a reasonably brief exposition of Bruhat--Tits theory and the properties that will be needed in this section.

Since we will be manipulating objects in the reduced and extended buildings, we will be using the subscripts $\square_{\red}$ and $\square_{\ext}$ to distinguish between the two.

\subsection{Preliminaries}
\begin{definition}An element $g \in G$ is said to be strongly type-preserving if: For each triplet $(\cF , \cA_{\ext}^{\prime},w)$, where $\cF$ is a facet contained in an apartment $\cA_{\ext}^{\prime}$ and $w\in W_{\aff}^{\prime}$, for which
 $$\text{\em (i) $g\cdot \cF \subset \cA_{\ext}^{\prime}$ and (ii) $(w \circ g)\cdot \cF=\cF$},$$ then the element $w \circ g$ fixes pointwise the closure $\overline{\cF}$. Set $G_{tp}$ for the set of strongly type-preserving elements in $G$. A subgroup $H\subset G_{tp}$ of strongly type-preserving elements is said to be strongly transitive if it acts transitively on all pairs $(\a'\times V_G,\cA_{\ext}^{\prime})$, where $\cA_{\ext}^{\prime} \subset \cB_{\ext}$ is an apartment and $\a^\prime \times V_G\subset \cA_{\ext}^{\prime}$ is an alcove.
\end{definition}
For more on these types of automorphisms on buildings we refer to \cite{rousseau08}.
\begin{proposition}\label{G1strong}
The subgroup $G_{tp}$ of strongly type preserving elements of $\cB_{\ext}$ is equal to $G_1=\ker \kappa_G$ and its action on $\cB_{\ext}$ is also strongly transitive.
\end{proposition}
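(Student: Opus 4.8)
The plan is to prove the two inclusions $G_1\subseteq G_{tp}$ and $G_{tp}\subseteq G_1$ (which in particular shows $G_{tp}$ is a subgroup), deducing strong transitivity in passing. I would rely on the following standard facts of Bruhat--Tits theory, all in \cite[\S 2]{UoperatorsI2021} (see also \cite{BT84,rousseau08}): $G_1=\ker\kappa_G$ is generated by the parahoric subgroups $K_\cF$, each of which fixes $\overline\cF$ pointwise; since the facets of $\cB_{\ext}$ have the form $\cF_{\red}\times V_G$, this already forces $G_1$ to act trivially on the $V_G$-direction, and, being generated by subgroups that fix a chamber pointwise, to act \emph{type-preservingly} on $\cB_{\ext}$; the root subgroups $\U_\alpha(F)$ lie in $G_1$, so every affine reflection is realised by an element of $\Nbf(F)\cap G_1$, whence the image of $N_1:=\Nbf(F)\cap G_1$ in $\Aff(\cA_{\ext})$ contains the affine Weyl group $W_{\aff}$; and $\mathrm{Fix}_G(\a)=I\subseteq G_1$. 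From these, strong transitivity of $G_1$ on $\cB_{\ext}$ is essentially immediate: $G_1$ is transitive on alcoves (move along a gallery, using that the parahoric attached to each intermediate wall is transitive on the alcoves containing that wall), and, by strong transitivity of $\G(F)$ on $\cB_{\ext}$ (standard Bruhat--Tits), two apartments sharing an alcove $\a$ are already interchanged by an element of $\mathrm{Fix}_G(\a)=I\subseteq G_1$.

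For $G_1\subseteq G_{tp}$, I would take $g\in G_1$ and an arbitrary triplet $(\cF,\cA',w)$ satisfying (i) and (ii). Writing $\cA'=g_0\cdot\cA_{\ext}$ with $g_0\in G_1$, the group $N_1':=g_0N_1g_0^{-1}\subseteq G_1$ surjects onto the affine Weyl group of $\cA'$, so $w$ is realised by some $n_w\in N_1'\subseteq G_1$; hence $w\circ g$ agrees on $\cF$ with the element $n_wg\in G_1$, which by (ii) stabilises $\cF$ setwise. Since $n_wg\in G_1$ acts type-preservingly and trivially on $V_G$, and the vertices of the honest simplex $\overline{\cF_{\red}}$ have pairwise distinct types, $n_wg$ fixes each of them, hence fixes $\overline\cF=\overline{\cF_{\red}}\times V_G$ pointwise; so does $w\circ g$. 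As the triplet was arbitrary, $g\in G_{tp}$.

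For the reverse inclusion I would argue by contraposition. Assume $\kappa_G(g)\neq0$; take $\cF=\a$, set $\a'':=g\cdot\a$, choose an apartment $\cA'$ containing both $\a$ and $\a''$ (two alcoves always lie in a common apartment), and let $w\in W_{\aff}'$ be the element with $w\cdot\a''=\a$, so that $(w\circ g)\cdot\a=\a$ and (i)--(ii) hold. Lift $w$ to $n_w\in N_1'\subseteq G_1$ as above; then $n_wg$ stabilises $\a$, while $\kappa_G(n_wg)=\kappa_G(g)\neq0$, so $n_wg\notin\mathrm{Fix}_G(\a)=I$. Hence $n_wg$, and with it $w\circ g$, does not fix $\overline\a$ pointwise, so $(\a,\cA',w)$ witnesses $g\notin G_{tp}$. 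Combined with the previous paragraph, $G_{tp}=G_1$.

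The main point is not the combinatorics above but securing the Bruhat--Tits inputs in exactly the form used --- chiefly $G_1=\langle\text{parahorics}\rangle=\ker\kappa_G$ and $\mathrm{Fix}_G(\a)=I$ --- which together say that $G_1$ is precisely the subgroup of $\G(F)$ acting on $\cB_{\ext}$ by type-preserving automorphisms with no residual $V_G$-translation; both inclusions hinge on this. If \cite{UoperatorsI2021} does not state these in the needed shape, one extracts them from \cite{BT84} or \cite{rousseau08}.
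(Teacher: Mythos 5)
Your forward inclusion $G_1\subseteq G_{tp}$ and the strong transitivity argument are essentially sound (modulo a small imprecision: parahorics do not fix a chamber pointwise, so type-preservation of $G_1$ is better justified by the fact that $G_1$ is generated by the root groups $U_\alpha$ and $\nu_N^{-1}(W_{\aff})$, which is exactly the description the paper takes from Rousseau). The genuine gap is in the converse inclusion, and it sits exactly where the whole difficulty of the proposition lies: you assert $\mathrm{Fix}_G(\a)=I$ and deduce that $n_wg\notin G_1$ cannot fix $\overline{\a}$ pointwise. But the pointwise fixator of $\overline{\a}\times V_G$ in $G$ is not $I$; it is the $G^1$-fixator $I^1=M^1I$ (this is the paper's own notation and statement, with $I^1/I\simeq M^1/M_1$ and $I=I^1\cap G_1$). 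In general $M^1\not\subseteq G_1$: there exist compact elements with nontrivial \emph{torsion} Kottwitz image (e.g.\ for ramified unitary groups, already a compact central element such as $-\mathrm{Id}$ in ramified $\U(3)$, $p$ odd, has $\kappa_G\neq 0$). For such $g$ your contraposition produces nothing: $n_wg$ may lie in $I^1\setminus I$ and fix $\overline{\a}$ pointwise, so the triplet $(\a,\cA',w)$ is not a witness; indeed a compact central element with $\kappa_G\neq 0$ acts trivially on all of $\cB_{\ext}$ and hence can never be detected by the defining condition. So your argument only excludes elements with $\nu_G(g)\neq 0$ (or, more generally, those not in $I^1$-type fixators), and the torsion part $G^1/G_1$ — which is precisely the subtle distinction between parahoric subgroups and full fixators — is left untouched. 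Your closing caveat ("if the inputs are not available in this shape, extract them from BT/Rousseau") is therefore not a formality: the input $\mathrm{Fix}_G(\a)=I$ is simply false unless $M^1=M_1$, and within $G_1$ it would be circular.

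By way of comparison, the paper does not argue directly at all: it quotes Rousseau's identification of the strongly type-preserving subgroup with the group generated by $\nu_N^{-1}(W_{\aff})$ and the root groups $U_\alpha$, identifies that group with $G_1=\ker\kappa_G$, and cites Bruhat--Tits for strong transitivity. The hard content of the statement — that nothing outside $G_1$ (in particular nothing with nontrivial torsion Kottwitz image) passes the test — is exactly what that citation carries, and it is the part your proof does not establish. If you want a self-contained argument, you must either work with the correct fixators $I^1$, $K^1$ and handle the quotient $M^1/M_1$ explicitly, or reduce to the cited group-theoretic description of the type-preserving subgroup.
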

\begin{proof}
The subgroup of $G$ consisting of strongly type preserving automorphisms of $\cB_{\ext}$ is the group generated by $\nu_{N}^{-1}(W_{\aff})$ and the root groups $U_\alpha$ for $\alpha \in \Phi$ (see for example \cite[11.10]{rousseau08}), which is equal to the subgroup $G_1= \ker \kappa_G$.  
Finally, $G_1$ is indeed strongly transitive on $\cB_{\ext}$ as shown in \cite[Corollaire 2.2.6]{BT72}.
\end{proof}
\begin{remark}
The action of $G$ on $\cB_{\ext}$ is strongly transitive but non type-preserving in general. One may think of $G_1\backslash \cB_{\ext}$ as the "universal closed alcove" on which $G/G_1\simeq N/N_1$ acts faithfully: it is a (commutative) group of automorphisms of the universal closed alcove, with a translation part, given by $\nu_G$ and a "rotational part", finite, given by the torsion subgroup $G^1/G_1$.
\end{remark} 
\subsection{Iwahori subgroups and retractions}
\begin{lemma}\label{lemretraction}
(i) For any minimal facet $\{a \}\times V_G \in \cB_{\ext}$, there is an apartment $\cA_{\ext,a} \subset \cB_{\ext}$ containing $\a\times V_G$ and $a \times V_G$, along, then, with the whole facet of $a$. 
(ii) There is a unique isometry $\phi\colon \cA_{\ext,a} \to \cA_{\ext}$ fixing $\overline{\a}\times V_G$. 
(iii) The image $\phi(a)$ does not depend on the choice of $\cA_{\ext,a}$. Denote this image by $r_{\cA_{\ext},\a}(a)\nomenclature[E]{$r_{\cA_{\ext},\a}$}{$\colon \cB_{\ext}\to \cA_{\ext}$, Retraction based on the alcove $\a$}$.
\end{lemma}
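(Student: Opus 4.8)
The plan is to deduce the three assertions from the two apartment axioms recalled in \cite[\S 2]{UoperatorsI2021}, combined with the rigidity of isometries of a Euclidean affine space, working throughout in $\cB_{\ext}$; recall that the apartments of $\cB_{\ext}$ are the subsets $\cA'\times V_G$ with $\cA'$ an apartment of $\cB_{\red}$, that the facets of $\cB_{\ext}$ are the products with $V_G$ of the facets of $\cB_{\red}$, and that consequently each apartment of $\cB_{\ext}$ is a closed union of closed facets. For (i): the alcove $\a\times V_G$ and the minimal facet $\{a\}\times V_G$ are two facets of $\cB_{\ext}$, so the first apartment axiom furnishes an apartment $\cA_{\ext,a}$ containing both; being a closed union of closed facets and containing the open alcove $\a\times V_G$, this apartment contains $\overline{\a}\times V_G$, and it contains all of the facet $\{a\}\times V_G$ of $a$ by construction.

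For (ii): both $\cA_{\ext,a}$ and $\cA_{\ext}$ contain the alcove $\overline{\a}\times V_G$, so the second apartment axiom yields an isomorphism of apartments $\phi\colon \cA_{\ext,a}\to\cA_{\ext}$ fixing $\cA_{\ext,a}\cap\cA_{\ext}$ pointwise, in particular fixing $\overline{\a}\times V_G$; any such isomorphism of apartments is an isometry. For uniqueness, if $\phi'$ is another such isometry then $\phi^{-1}\circ\phi'$ is an isometry of the Euclidean affine space $\cA_{\ext,a}$ fixing $\overline{\a}\times V_G$ pointwise; as $\overline{\a}$ has nonempty interior in the reduced apartment, the set $\overline{\a}\times V_G$ has nonempty interior in $\cA_{\ext,a}$, hence affinely spans it, so $\phi^{-1}\circ\phi'=\id$ and $\phi'=\phi$.

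For (iii): let $\cA_1$ and $\cA_2$ be two apartments of $\cB_{\ext}$ each containing $\overline{\a}\times V_G$ and $\{a\}\times V_G$, with associated isometries $\phi_1\colon\cA_1\to\cA_{\ext}$ and $\phi_2\colon\cA_2\to\cA_{\ext}$ as in (ii). The second apartment axiom provides an isomorphism $\psi\colon\cA_1\to\cA_2$ fixing $\cA_1\cap\cA_2$ pointwise; since $\overline{\a}\times V_G$ and $\{a\}\times V_G$ lie in $\cA_1\cap\cA_2$, the map $\psi$ fixes $\overline{\a}\times V_G$ pointwise and satisfies $\psi(a)=a$. Then $\phi_2\circ\psi\colon\cA_1\to\cA_{\ext}$ is an isometry fixing $\overline{\a}\times V_G$ pointwise, so the uniqueness in (ii) forces $\phi_2\circ\psi=\phi_1$; evaluating at $a$ gives $\phi_1(a)=\phi_2(\psi(a))=\phi_2(a)$, so $\phi(a)$ is independent of the chosen apartment, which justifies the notation $r_{\cA_{\ext},\a}(a)$.

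I do not expect a genuine obstacle here: the whole argument is a routine application of the building axioms. The one point meriting care is the well-definedness in (iii), which hinges entirely on the uniqueness clause of (ii); that uniqueness, in turn, rests only on the elementary fact that an isometry of a Euclidean affine space is determined by its restriction to any subset with nonempty interior.
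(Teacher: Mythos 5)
Your proof is correct, and it follows the same skeleton as the paper's (common apartment; existence of an isomorphism fixing the closed alcove; uniqueness by rigidity; (iii) by composing and invoking uniqueness), but it runs on different inputs. Where you invoke the abstract building axioms — any two facets lie in a common apartment, and any two apartments are isomorphic by a map fixing their intersection pointwise — the paper instead produces the isomorphism in (ii) group-theoretically, via the strong transitivity of $G_1$ (Proposition \ref{G1strong}): $\phi$ is realized as the restriction of an element $g\in G_1$ carrying $(\cA_{\ext,a},\a\times V_G)$ to $(\cA_{\ext},\a\times V_G)$ and fixing $\overline{\a}\times V_G$, i.e.\ of an element of the Iwahori subgroup $I$. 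For uniqueness, your argument (an isometry of a Euclidean affine space fixing a subset with nonempty interior, such as the closed alcove, is the identity) is more elementary and self-contained than the paper's appeal to building rigidity, and your treatment of (iii) is essentially identical to the paper's. The one thing the paper's route buys that yours does not is that the construction of $\phi$ as a restriction of $g\in I$ is exactly what is recycled in the proof of Lemma \ref{Iwretract} to identify the fibers of $r_{\cA_{\ext},\a}$ with $I$-orbits; with your axiomatic construction one would have to re-introduce the group at that later step (which is harmless, since the apartment axioms for $\cB_{\ext}$ are themselves verified via strong transitivity, cf.\ \cite{BT72}, but it is worth being aware of).
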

\proof
\begin{enumerate}
\item[(i)] See \cite[7.4.18]{BT72}.
\item[(ii)] (Existence) By Proposition \ref{G1strong}, there exists $g \in G$ sending the pair $(\cA_{\ext,a}, \a\times V_G)$ to $(\cA_{\ext}, \a\times V_G)$ and fixing pointwise $\overline{\a}\times V_G$.

(Uniqueness) If $\phi \colon \cA_{\ext,a} \to \cA_{\ext}$ is a second isomorphism, then $\phi\circ  g^{-1}$ is an isometric automorphism of $\cA_{\ext}$ fixing $\overline{\a}\times V_G$, but any automorphism of $\cB_{\ext}$ that fixes a chamber is the identity. Moreover, being an isometry, $\phi$ fixes $\overline{\a}\times V_G$ if and only if it fixes the whole intersection $\cA_{\ext}\cap \cA_{\ext,a}$.
\item[(iii)] Let $\cA_{\ext,a}'$ be another apartment containing $\a\times V_G$ and $a$. Consider the following (possibly non-commutative) diagram 
$$
  \begin{tikzcd}
    \cA_{\ext,a} \arrow{r}{\phi} \arrow[swap]{dr}{\phi''} & \cA_{\ext} \\
     & \cA_{\ext,a}'\arrow[swap]{u}{\phi'}
  \end{tikzcd}
$$
where all maps are (unique) isomorphism fixing $\overline{\a}\times V_G$. 
The uniqueness of (i) implies that $\phi=\phi'\circ \phi'' $, hence, the above diagram is commutative. In addition, the map $\phi''$ being an isometry, must fix $a\in \cA_{\ext,a}\cap \cA_{\ext,a}'$ for all $v \in V_G$. Hence $\phi(a)=\phi'\circ\phi''(a)=\phi'(a)$. 
This proves that $\phi(a)$ does not depend on the chosen $\cA_{\ext,a}$.\qed
\end{enumerate}
\begin{remark}\label{remakrretraction}
By the above lemma, for any point $(a\times v)\subset \cA \times V_G = \cA_{\ext}$ contained in some apartment $\cA_{\ext,a}$ that also contains $\a\times V_G$, there is a unique isomorphism $\phi\colon \cA_{\ext,a} \to \cA_{\ext}$ fixing $\overline{\a}\times V_G$. In particular $\phi$ fixes the component $V_G$, thus 
$$r_{\cA_{\ext},\a}((a,v))=(r_{\cA,\a}(a), v),$$
for some retraction map $r_{\cA,\a}\colon \cB_{\red}\to \cA$ for the reduced building.
\end{remark}
\begin{definition}
The map $r_{\cA_{\ext},\a}\colon\cB_{\ext}\to \cA_{\ext}$ (resp. $r_{\cA,\a}$) defined by lemma \ref{lemretraction} is called the retraction onto $\cA_{\ext}$ based at $\a\times V_G$, (resp. $\cA$ based at $\a$).
\end{definition}
In the following lemma, we interpret the retraction $r_{\cA_{\ext},\a}$ using the Iwahori subgroup.
\begin{lemma}\label{Iwretract} For any $(a,v) \in \cA_{\ext}$,
 $$r_{\cA_{\ext},\a}^{-1}((a,v))={I^1}\cdot (a,v)={I}\cdot (a,v).$$
The fibers of the retraction $r_{\cA_{\ext},\a}$ are exactly the ${I^1}$-orbits on $\cB_{\ext}$.
\end{lemma}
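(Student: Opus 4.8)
The plan is to prove the two displayed equalities by establishing the chain of inclusions
$I\cdot(a,v)\subseteq I^1\cdot(a,v)\subseteq r_{\cA_{\ext},\a}^{-1}((a,v))\subseteq I\cdot(a,v)$,
and then to deduce the statement about fibers formally. Two standard facts from the Bruhat--Tits formalism of \cite{UoperatorsI2021} are used throughout: that $I^1=\widetilde{I}$ is exactly the pointwise fixator in $\cB_{\ext}$ of $\a\times V_G$, hence of its closure $\overline{\a}\times V_G$; and that $M^1$ acts trivially on the apartment $\cA_{\ext}$, because $\nu_M(M^1)=0$ ($\nu_M(M)$ being discrete while $M^1$ is bounded). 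Since $M^1$ fixes $\a$ it normalizes $I$, so $I^1=IM^1$, and since $M^1$ fixes the point $(a,v)\in\cA_{\ext}$, writing any element of $I^1$ as $im$ with $i\in I$, $m\in M^1$ gives $(im)\cdot(a,v)=i\cdot(a,v)$; hence $I^1\cdot(a,v)=I\cdot(a,v)$, which disposes of the two outer terms.

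For $I^1\cdot(a,v)\subseteq r_{\cA_{\ext},\a}^{-1}((a,v))$ I would, given $g\in I^1$, compute $r_{\cA_{\ext},\a}(g\cdot(a,v))$ using the apartment $g\cdot\cA_{\ext}$: it contains $g\cdot(a,v)$, and it contains $\overline{\a}\times V_G=g\cdot(\overline{\a}\times V_G)$ because $g$ fixes $\overline{\a}\times V_G$ pointwise. The restriction of $g^{-1}$ is then an isometry $g\cdot\cA_{\ext}\to\cA_{\ext}$ fixing $\overline{\a}\times V_G$, so by the uniqueness in Lemma \ref{lemretraction}(ii) (two such isometries differ by an automorphism of $\cA_{\ext}$ fixing a chamber, which must be the identity) it is the isometry computing the retraction; hence $r_{\cA_{\ext},\a}(g\cdot(a,v))=g^{-1}g\cdot(a,v)=(a,v)$. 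For the last inclusion $r_{\cA_{\ext},\a}^{-1}((a,v))\subseteq I^1\cdot(a,v)$, I would take $x\in\cB_{\ext}$ with $r_{\cA_{\ext},\a}(x)=(a,v)$, pick an apartment $\cA_{\ext}'$ containing $\a\times V_G$ and $x$ (which exists, cf. \cite{BT72}), and let $\phi\colon\cA_{\ext}'\to\cA_{\ext}$ be the defining isometry, so $\phi(x)=(a,v)$. Strong transitivity of $G_1$ on $\cB_{\ext}$ (Proposition \ref{G1strong}) yields $g\in G_1$ with $g\cdot(\a\times V_G,\cA_{\ext})=(\a\times V_G,\cA_{\ext}')$; since $g\in G_1=G_{tp}$ stabilizes the alcove $\a\times V_G$, the definition of strong type-preservation applied to the triple $(\a\times V_G,\cA_{\ext}',\id)$ forces $g$ to fix $\overline{\a}\times V_G$ pointwise, so $g\in I^1$. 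Then $g^{-1}|_{\cA_{\ext}'}\colon\cA_{\ext}'\to\cA_{\ext}$ is an isometry fixing $\overline{\a}\times V_G$, hence equals $\phi$ by the same uniqueness, so $(a,v)=\phi(x)=g^{-1}\cdot x$, that is $x=g\cdot(a,v)\in I^1\cdot(a,v)$. This completes the chain of inclusions, hence the two displayed equalities.

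Finally, for the fibers: the retraction restricts to the identity on $\cA_{\ext}$, hence is surjective onto $\cA_{\ext}$, so its fibers are precisely the sets $r_{\cA_{\ext},\a}^{-1}((a,v))$ for $(a,v)\in\cA_{\ext}$, each equal to the $I^1$-orbit $I^1\cdot(a,v)$ by what precedes; conversely any $I^1$-orbit $O$ contains a point $x$, and with $(a,v):=r_{\cA_{\ext},\a}(x)$ one has $x\in r_{\cA_{\ext},\a}^{-1}((a,v))=I^1\cdot(a,v)$, whence $O=I^1x=r_{\cA_{\ext},\a}^{-1}((a,v))$ is a fiber. The one step requiring genuine care is the inference ``$g\in G_1$ stabilizes $\a\times V_G$ $\Rightarrow$ $g$ fixes $\overline{\a}\times V_G$ pointwise, so $g\in I^1$'' inside the argument for $r_{\cA_{\ext},\a}^{-1}((a,v))\subseteq I^1\cdot(a,v)$: this is exactly where Proposition \ref{G1strong} (both the identification $G_{tp}=G_1$ and strong transitivity) and the definition of strong type-preservation are indispensable; everything else is formal manipulation with Lemma \ref{lemretraction}.
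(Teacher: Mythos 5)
Your proof is correct and follows essentially the same route as the paper's: both directions rest on Proposition \ref{G1strong} (strong transitivity plus strong type-preservation of $G_1$, forcing the transporting element to fix $\overline{\a}\times V_G$ pointwise and hence lie in the Iwahori) combined with the uniqueness statement of Lemma \ref{lemretraction}, and the passage from $I$ to $I^1$ via $I^1=M^1I$ with $M^1$ acting trivially on $\cA_{\ext}$. The only cosmetic difference is that you argue directly in the extended building with an explicit chain of inclusions, whereas the paper first treats the reduced building and transfers the result via Remark \ref{remakrretraction}.
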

 \begin{remark}\label{remakrretraction2}
Using Remark \ref{remakrretraction}, we see that for any $(a,v) \in \cA_{\ext}$ where $a\in \cA$ and $v \in V_G$, we have $r_{\cA,\a}^{-1}(a)= I\cdot a={I^1}\cdot a$ since $I \subset {I^1} \subset G^1=\ker \nu_G$.
\end{remark}
\begin{proof}
Let $a$ be any point in the apartment $\cA$, $a' \in \cB_{\red}$ such that $r_{\cA,\a}(a')=a$. According to Proposition \ref{G1strong}, the subgroup $G_1$ has a strongly transitive action on $\cB_{\ext}$, thus also on $\cB_{\red}$. Hence there exists $g\in G_1$ that sends the ordered pair $(a', \a)$ to the pair $(a, \a)$. In particular $g \cdot \a=\a$, but since $G_1$ is strongly type-preserving it must fixe pointwise $\a$, hence $g\in I$. 

The isomorphism $\phi_{g^{-1}} \colon g\cdot \cA_{\ext} \to \cA_{\ext} $ fixes the alcove $\a\times V_G$. Thus, by lemma \ref{lemretraction}, we have 
$$(a,v)=(\phi_{g^{-1}}(a'),v)=r_{\cA_{\ext},\a}((a',v)), \quad \forall v\in V_G.$$
Conversely, any $(a',v)=g \cdot (a,v)$ clearly retracts to $(r_{\cA,\a}(a),v)=r_{\cA_{\ext},\a}(a,v)$. Thus $r_{\cA,\a}^{-1}(a)= I\cdot a$ and $r_{\cA_{\ext},\a}^{-1}(a,v)= I\cdot (a,v)$, for any $v\in V_G$.
Finally, the equality $I\cdot (a,v)={I^1}\cdot (a,v)$ follows from ${I^1} = M^1 I$.
 \end{proof}
The following figure describes the case of a tree (e.g., the $\SL_2$ or $\U(3)$ case).  The blue vertices in the closure of a {\color{green}{green}} (resp. a {\color{magenta}{magenta}}) alcove are all in the same $I$-orbit.
\begin{center}
\tikzset{set angles for level/.style={level #1/.append style={sibling angle=60}},
  level/.append style={set angles for level=#1}}
\begin{tikzpicture}[
  grow cyclic,
  level distance=1cm,	
  level/.style={
    level distance/.expanded={\ifnum#1>1 \tikzleveldistance\else\tikzleveldistance\fi},
    nodes/.expanded={\ifodd#1 fill=red!75   \else fill=blue!75\fi}
  },
    level 1/.style={sibling angle=60},
  level 2/.style={sibling angle=60},
  level 3/.style={sibling angle=30},
  level 4/.style={sibling angle=25},
  nodes={circle,draw,inner sep=+0.9pt, minimum size=2pt},]
\path[rotate=90]
  node (A) [draw,top color=red!90, bottom color=blue!10,minimum size=4pt]  [above right]{}
child[]{
    node [above,circle,draw=black,top color=red!10,bottom color=blue!90,minimum size=4pt] {} 
    child[black] foreach \cntII in {1,...,2} { 
      node  [draw=black,top color=red!90,bottom color=blue!10,minimum size=4pt]  {}
      child[green] foreach \cntIII in {1,...,2} {
        node  [draw=black,top color=red!10,bottom color=blue!90,minimum size=4pt] {} 
        child[black] foreach \cntIV in {1,...,2} {
          node  [draw=black,top color=red!90,bottom color=blue!10,minimum size=4pt]  {}
          child[magenta] foreach \cntIV in {1,...,2} {
          node  [draw=black,top color=red!10,bottom color=blue!90,minimum size=4pt]  {} }}}}};
\node[draw=none] at (0.2,0.5) {$\a$};
\node[draw=none] at (0.35,1.1) {$a_\circ $};
\end{tikzpicture}     
\end{center}
\subsection{Geometric \texorpdfstring{$\mathbb{U}$}{U}-operators}\label{ALgeo}
Set $\cA^\circ_{\ext}:=M\cdot (a_\circ,0_{V_G}) =\left\{a_{m}:=\left(a_\circ + \nu_M(m),\nu_G(m)\right) , \forall m\in M \right\} \subset \cA_{\ext} \text{ and }\cB_{\ext}^{\circ}:=G\cdot (a_\circ,0_{V_G})$.

We define an "excursion pairing" 
$$\begin{tikzcd}[column sep= tiny]
M\arrow{rrr} \times \Z[\cA^\circ_{\ext}]   &&& \Z[\cB^\circ_{\ext}],&
(m,a_{m'}  ) \arrow[rrr, mapsto]&&& \cU_m a_{m'}
\end{tikzcd}$$
where $\cU_ma_{m'}\nomenclature[e]{$\cU_m$}{The geometric $\mathbb{U}$-operator corresponding to $m\in M^-$}$ is defined to be the formal sum of vertices appearing in the fiber $$r_{\cA_{\ext},m' \cdot \a}^{-1}(m\cdot a_{m'})= r_{\cA_{\ext},m' \cdot \a}^{-1}(a_{mm'}).$$
\begin{lemma}\label{equivariant}
Let $m\in M^-$ and $m' \in M$. We have
$$\cU_{m} a_{m'}= m' \cdot \cU_{m} a_1.$$
In other words, the operator $\cU_{m}$ is $M$-invariant and in particular; the restriction of the above pairing to the semigroup $M^-$ defines a left action \begin{tikzcd}[column sep= tiny]
M^-\arrow{rrr} \times \Z[\cA^\circ_{\ext}]  \arrow{rrr}&&& \Z[\cB^\circ_{\ext}]
.\end{tikzcd}
\end{lemma}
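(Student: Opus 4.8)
The plan is to deduce the identity from a single ``$M$-twisted equivariance'' of the retractions of Lemma~\ref{lemretraction}, after a little bookkeeping. First I would record the elementary facts used throughout. Since $\Mbf=Z_\G(\Sbf)$, the group $M$ stabilises the apartment $\cA$ and acts on it through the translations $\nu_M$; hence it stabilises $\cA_{\ext}=\cA\times V_G$ (acting on the $V_G$-factor by translation by $\nu_G(m')$), so that $m'\cdot\a\subset\cA_{\ext}$ is again an alcove of $\cA_{\ext}$ and the retraction $r_{\cA_{\ext},m'\cdot\a}$ is defined --- indeed the construction of Lemma~\ref{lemretraction} uses only the strong transitivity of $G_1$ on pairs (apartment, alcove) (Proposition~\ref{G1strong}), which treats $m'\cdot\a$ exactly like $\a$. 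Moreover, as $\nu_M$ and $\nu_G$ are group homomorphisms, $m$ acts on $\cA_{\ext}$ by the translation $(\nu_M(m),\nu_G(m))$, so that
$$m\cdot a_{m'}=a_{mm'}=m'\cdot a_m\qquad\text{for all }m,m'\in M.$$

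The core of the argument is the identity
$$r_{\cA_{\ext},m'\cdot\a}\circ m'\;=\;m'\circ r_{\cA_{\ext},\a}\colon\cB_{\ext}\longrightarrow\cA_{\ext},\qquad m'\in M.$$
To prove it, fix $x\in\cB_{\ext}$, choose an apartment $\cA'_{\ext}$ containing $x$ and $\a\times V_G$, and let $\phi\colon\cA'_{\ext}\to\cA_{\ext}$ be the unique isometry fixing $\overline{\a}\times V_G$ pointwise, so that $r_{\cA_{\ext},\a}(x)=\phi(x)$ by Lemma~\ref{lemretraction}. Then $m'\cdot\cA'_{\ext}$ is an apartment containing $m'\cdot x$ and $m'\cdot\a$, and $m'\circ\phi\circ m'^{-1}\colon m'\cdot\cA'_{\ext}\to m'\cdot\cA_{\ext}=\cA_{\ext}$ is an isometry (a composite of building isometries) fixing $m'\cdot(\overline{\a}\times V_G)$ pointwise. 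By the uniqueness clause of Lemma~\ref{lemretraction}, applied this time with $m'\cdot\a$ in place of $\a$, this composite is precisely the isometry computing $r_{\cA_{\ext},m'\cdot\a}$; evaluating at $m'\cdot x$ gives $r_{\cA_{\ext},m'\cdot\a}(m'\cdot x)=m'\cdot\phi(x)=m'\cdot r_{\cA_{\ext},\a}(x)$. I expect this to be essentially the only real point of the proof: everything hinges on noticing that conjugation by $m'$ carries the isometry defining the $\a$-retraction to the one defining the $(m'\cdot\a)$-retraction, which is legitimate precisely because the uniqueness in Lemma~\ref{lemretraction} holds for an arbitrary alcove of $\cA_{\ext}$, not just $\a$.

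To conclude, I would pass to fibres in the displayed identity and evaluate at $a_{mm'}=m'\cdot a_m\in\cA_{\ext}$, obtaining $r_{\cA_{\ext},m'\cdot\a}^{-1}(a_{mm'})=m'\cdot r_{\cA_{\ext},\a}^{-1}(a_m)$. By definition $\cU_m a_{m'}$ is the formal sum of the vertices in $r_{\cA_{\ext},m'\cdot\a}^{-1}(m\cdot a_{m'})=r_{\cA_{\ext},m'\cdot\a}^{-1}(a_{mm'})$ and $\cU_m a_1$ that of the vertices in $r_{\cA_{\ext},\a}^{-1}(a_m)$ (each a single Iwahori orbit by Lemma~\ref{Iwretract}, hence finite); since $m'\in G$ permutes the vertex set $\cB_{\ext}^{\circ}=G\cdot(a_\circ,0_{V_G})$, applying $m'$ term by term gives $\cU_m a_{m'}=m'\cdot\cU_m a_1$, which is the asserted $M$-invariance. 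Finally, for the left-action statement I would observe that, by Lemma~\ref{Iwretract} together with the Iwahori decomposition $I=I^+M_1I^-$, the operator $\cU_m$ agrees on $\Z[\cA^\circ_{\ext}]=\Z[M\cdot(a_\circ,0_{V_G})]$ with the restriction of the endomorphism $u^1_m=\varphi_\a(m)$ of $\Z[\cB_{\ext}^{\circ}]\cong\Z[G/K^1]$ provided by Corollary~\ref{injectU}; since $m\mapsto u^1_m$ is multiplicative on the semigroup $M^-$, the restricted pairing is a left action of $M^-$.
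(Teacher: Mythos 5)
Your argument is correct, but it takes a genuinely different route from the paper. The paper's proof never isolates an equivariance property of retractions: it immediately converts the fiber into an Iwahori orbit via Lemma \ref{Iwretract}, writing $\cU_m a_{m'}=I_{m'\cdot\a}\cdot a_{mm'}=m'Im'^{-1}mm'\cdot(a_\circ,0_{V_G})$, and then manipulates group elements, using that $m$ and $m'$ commute modulo $M_1\subset I$ and that $m\in M^-$ allows one to replace $I$ by $I^+$ (via \cite[Lemma \ref{normalizingI}]{UoperatorsI2021}), arriving at the explicit formula $\cU_m a_{m'}=m'\sum_{i\in I^+/mI^+m^{-1}} i\cdot a_m$. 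You instead prove the twisted equivariance $r_{\cA_{\ext},m'\cdot\a}\circ m'=m'\circ r_{\cA_{\ext},\a}$ by conjugating the defining isometry and invoking the uniqueness clause of Lemma \ref{lemretraction} for the alcove $m'\cdot\a$ (you rightly note the lemma's proof applies verbatim to any alcove of $\cA_{\ext}$, which the paper's definition of $\cU_m a_{m'}$ already presupposes), and then deduce the identity by passing to fibers. Your approach is cleaner and more general: it shows the $M$-invariance holds for every $m\in M$, with no appeal to Iwahori orbits, to commutativity mod $M_1$, or to the hypothesis $m\in M^-$. What the paper's computation buys is the explicit $I^+$-coset expansion of $\cU_m a_{m'}$, which is exactly what is reused later (Lemma \ref{additivitysuccesors}, Theorem \ref{U=A}); in your write-up that formula only resurfaces, asserted rather than derived, in the final sentence where you identify $\cU_m$ with $u^1_m$ via Corollary \ref{injectU} to get the action statement. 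That step is fine but is carrying the same content as the paper's lines (1)--(2) (Lemma \ref{Iwretract} plus \cite[Lemma \ref{normalizingI}]{UoperatorsI2021}, where $m\in M^-$ is genuinely needed), so you should either spell it out or simply note, as the paper implicitly does, that the action clause follows once the coset formula is in hand.
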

\proof 
By Lemma \ref{Iwretract}, we have
\begin{align*}\cU_ma_{m'}&=I_{m'\cdot \a} \cdot a_{mm'}\\
&= m'Im'^{-1} m m' \cdot (a_{\circ},0_{V_G})\\
\overset{\text{(1)}}&{=}m'Im \cdot (a_{\circ},v)\\
\overset{\text{(2)}}&{=}m'I^+m \cdot (a_{\circ},0_{V_G})&\text{\cite[Lemma \ref{normalizingI}]{UoperatorsI2021}}
,\end{align*}
For (1), although $m,m'$ may not commute as elements of $M$, they do commute modulo $M_1\subset I\subset {I^1}$, which shows $m'^{-1}mm'K=mK$. The line (2) requires $m \in M^-$, which guarantees $I^+ \cap m {K^1} m^{-1} = m I^+ m^{-1}$. Hence,
$$\cU_ma_{m'}=m' \sum_{ i \in I^+/m I^+ m^{-1}} i \cdot a_{m}.$$
Note that the summation above is by definition $\cU_{m} a_1$.\qed 

Now, we extend the action of $\cU_{m}$ to the set $\cB_{\ext}^{\circ}=B\cdot (a_\circ,0_{V_G})$, 
this equality follows from $G = BK$ (Theorem \ref{Iwasawadec}).
\begin{lemma}\label{funddomainU}
For each $g \in G$, set $a_g:=g \cdot (a_\circ,0_{V_G})\nomenclature[E]{$a_g$}{$g \cdot (a_\circ,0_{V_G})\in \cB_{\ext}^{\circ}$ for $g\in G$}$\footnote{This is compatible with our previous notation $a_m=m \cdot (a_\circ,0_{V_G})$ for $m \in M$.}. The intersection
$$  \cA_{\ext}^\circ \cap U^+ \cdot a_g $$
consists of a single\footnote{Actually, the extended apartment $\cA_{\ext}$ is a fundamental domain for the action of $U^+$ on $\cB_{\ext}$.}
 vertex $\{a_{m_g}\}$ for some $m_g \in M$, unique modulo $M^1$. 
 \end{lemma}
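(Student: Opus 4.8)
The plan is to establish existence of such a vertex via the Iwasawa decomposition, and uniqueness by computing the stabilizer of $(a_\circ,0_{V_G})$ and invoking a boundedness argument.

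\textbf{Existence.} I would apply the Iwasawa decomposition (Proposition~\ref{Iwasawadec}) to the minimal parabolic $B=M\ltimes U^+$, so that $G=U^+MK$. Writing $g=umk$ with $u\in U^+$, $m\in M$, $k\in K$, note that $K$ is bounded, hence $\nu_G(K)=0$ and $K\subseteq G^1$; since moreover $K$ fixes $a_\circ\in\cB_{\red}$, the element $k$ fixes $(a_\circ,0_{V_G})$. Therefore $a_g=um\cdot(a_\circ,0_{V_G})=u\cdot a_m$, and $a_m=u^{-1}\cdot a_g$ lies in $\cA^\circ_{\ext}\cap U^+\cdot a_g$. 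This exhibits the required vertex, with $m_g:=m$.

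\textbf{Uniqueness.} First I would record that $\Stab_G((a_\circ,0_{V_G}))=K^1$: the action of $G$ on the factor $V_G$ is by the translations $v\mapsto v+\nu_G(g)$, so fixing $0_{V_G}$ is equivalent to lying in $G^1=\ker\nu_G$, and by definition $K^1$ is precisely the $G^1$-fixator of the special point $a_\circ$. Now suppose $a_{m_1}=u\cdot a_{m_2}$ for some $u\in U^+$ and $m_1,m_2\in M$. Then $m_2^{-1}u^{-1}m_1\in\Stab_G((a_\circ,0_{V_G}))=K^1$. Since $M$ normalizes $U^+$, I can rewrite this element as $u'm$ with $u':=m_2^{-1}u^{-1}m_2\in U^+$ and $m:=m_2^{-1}m_1\in M$, so that $u'm\in K^1\cap B$. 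The group $K^1$ is bounded (indeed $K^1=M^1K$, cf. the $\flat=1$ case of the decomposition lemma above), so $K^1\cap B$ is a bounded subgroup of $B$; the projection $\mathrm{pr}_M\colon B=M\ltimes U^+\to M$ is a continuous homomorphism, hence carries it onto a bounded subgroup of $M$, which must lie inside the maximal bounded subgroup $M^1$. As $\mathrm{pr}_M(u'm)=m$, this forces $m=m_2^{-1}m_1\in M^1$. Finally $M^1$ fixes $(a_\circ,0_{V_G})$ (it acts trivially on $\cA$, being $\ker\nu_M$, and trivially on $V_G$, being bounded), so $a_{m_1}=m_2\cdot\big(m\cdot(a_\circ,0_{V_G})\big)=m_2\cdot(a_\circ,0_{V_G})=a_{m_2}$. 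Together with the existence part this yields $\cA^\circ_{\ext}\cap U^+\cdot a_g=\{a_{m_g}\}$, with $m_g$ well defined modulo $M^1=K^1\cap M$.

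The step I expect to be the crux is the boundedness argument — that the image of the bounded subgroup $K^1\cap B$ under the projection to the Levi quotient $M$ is again bounded, hence lands inside $M^1$; everything else is a routine unwinding of the Iwasawa decomposition together with the description of $K^1$ as a point-stabilizer in the extended building. An alternative route to uniqueness, matching the footnote, would be to realize $r$ via the retraction of $\cB_{\ext}$ onto $\cA_{\ext}$ based at the chamber at infinity attached to $B$, whose fibres are exactly the $U^+$-orbits, so that $\cA_{\ext}$ is a strict fundamental domain for $U^+$; one then restricts this retraction to the special vertices $\cB^\circ_{\ext}$ and $\cA^\circ_{\ext}$.
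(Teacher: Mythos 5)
Your proof is correct, and its existence half is the same as the paper's: both just invoke the Iwasawa decomposition $G=U^+MK$ of Proposition \ref{Iwasawadec} and the fact that $K\subset G^1$ fixes $(a_\circ,0_{V_G})$. The uniqueness half, however, takes a genuinely different route. The paper picks $u\in U^+$ with $u\cdot a_m=a_{m'}$, notes that $mm'^{-1}u$ lies in the fixator $P_{\{m\cdot a_\circ\}}$, and then exploits the factorization $P_{\{m\cdot a_\circ\}}=N_{\{m\cdot a_\circ\}}U^-_{\{m\cdot a_\circ\}}U^+_{\{m\cdot a_\circ\}}$ together with the Borel--Tits fact $N\cap U^+U^-=\{1\}$; this yields not only $a_m=a_{m'}$ but also the finer conclusion that the witnessing $u$ itself lies in $U^+_{\{m\cdot a_\circ\}}$ and hence fixes the point (and the paper sketches a second, rigidity-of-apartments argument, which is essentially the retraction picture you mention at the end). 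You instead identify $\Stab_G((a_\circ,0_{V_G}))=P_{a_\circ}\cap G^1=K^1$ (exactly the description the paper records later, in the proof of Theorem \ref{Uopfiltration}), rewrite the resulting element as $u'm\in K^1\cap B$ with $u'\in U^+$ and $m=m_2^{-1}m_1\in M$, and conclude by a soft topological argument: $K^1=M^1K$ is compact, so the continuous projection $\pr_M\colon B\to M$ sends the bounded subgroup $K^1\cap B$ into the maximal bounded subgroup $M^1$, forcing $m\in M^1$ and hence $a_{m_1}=a_{m_2}$. Your route avoids the Iwahori-type decomposition of the point fixator and the fact $N\cap U^+U^-=\{1\}$, at the price of using $K^1=M^1K$ and the uniqueness of the maximal bounded subgroup of $M$; the paper's route is more structural and gives the extra information about $u$ for free. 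One cosmetic point: your parenthetical ``$M^1=\ker\nu_M$'' is an overstatement when the centre of $\G$ has positive split rank, since unbounded central elements also act trivially on the reduced apartment; what you actually need and use is only the inclusion of $M^1$ in the kernel of the translation action on both factors, which follows from boundedness, so the argument is unaffected.
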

\begin{proof}The intersection is nonempty by the decomposition $G=U^+ M K$. Suppose that there exist two $m,m' \in M$ such that $U^+ a_m=U^+ a_{m'}$. Let $u\in U^+$ such that $u \cdot a_m=a_{m'}$, i.e. $u \cdot a_m = a_{m'}=m'm^{-1} \cdot a_m $, hence $m m'^{-1} u \in P_{a_m} $, using the decomposition (see \cite[Proposition \ref{propertiesII3.3} and Proposition \ref{buildproperties}]{UoperatorsI2021})
$$P_{\{m\cdot a_\circ\}}=N_{\{m\cdot a_\circ\}} U_{\{m\cdot a_\circ\}}=N_{\{m\cdot a_\circ\}} U_{\{m\cdot a_\circ\}}^- U_{\{m\cdot a_\circ\}}^+.$$
Write $(m m'^{-1}) u= n u_- u_+$ for some $n \in N_{\{m\cdot a_\circ\}}$ and $u_\pm \in U_{\{m\cdot a_\circ\}}^\pm$. But $N\cap U^+U^-=\{1\}$ \cite[5.15]{BR65}, hence 
$$m m'^{-1}=n \in N_{\{m\cdot a_\circ\}}\text{ and }u= u_+ \in  U_{\{m\cdot a_\circ\}}^+,$$
and so $a_{m'}=u a_m =a_m$. This proves the lemma.
	
An alternative argument: For each $u\in U^+$, the restriction of the action of $u^{-1}$ to $u\cdot \cA_{\ext}$ is the unique isomorphism (uniqueness resembles the proof of Lemma \ref{lemretraction}) mapping $u\cdot \cA_{\ext}$ to $\cA_{\ext}$ and fixing their intersection pointwise. In particular, it fixes $a_{m'}=u \cdot a_m \in u\cdot \cA_{\ext}\cap \cA_{\ext}$, thus $a_m=a_{m'}$.\end{proof}
\begin{lemma}\label{equivariantB}Let $m\in M^-$, $g\in G$ and $u_g\in U^+$ verifying $a_g = u_g a_{m_g}$. Define
$$\cU_{m} a_g:= u_g \cdot \cU_{m} a_{m_g}\in \Z[\cB_{\ext}^\circ].$$
This definition does not depend on the chosen $u_g \in U^+$.
\end{lemma}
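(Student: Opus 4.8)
The plan is to deduce the independence from $u_g$ from a single fixed-point statement: that the stabilizer of the vertex $a_{m_g}$ inside $U^+$ already fixes the formal sum $\cU_m a_{m_g}$. Granting this, if $u_g,u_g'\in U^+$ both satisfy $a_g=u_g a_{m_g}=u_g' a_{m_g}$, then $v:=u_g'^{-1}u_g$ lies in $\Stab_{U^+}(a_{m_g})$, and once $v\cdot\cU_m a_{m_g}=\cU_m a_{m_g}$ is known we get $u_g\cdot\cU_m a_{m_g}=u_g'v\cdot\cU_m a_{m_g}=u_g'\cdot\cU_m a_{m_g}$. Before this I would dispose of the (harmless) ambiguity in $m_g$ itself: by Lemma~\ref{funddomainU} it is unique only modulo $M^1$, but $M^1$ fixes $(a_\circ,0_{V_G})$ and normalizes $I$ (it stabilizes $a_\circ$, hence normalizes $I^\pm=U^\pm_{a_\circ}$, and it normalizes $M_1$, hence the product $I=I^+M_1I^-$), so both the vertex $a_{m_g}$ and, by Lemma~\ref{Iwretract}, the fiber $\cU_m a_{m_g}=r_{\cA_{\ext},\,m_g\cdot\a}^{-1}(a_{mm_g})=m_g I m_g^{-1}\cdot a_{mm_g}$ depend only on $a_g$ and not on the chosen representative $m_g$.

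The heart of the argument is then to locate $\Stab_{U^+}(a_{m_g})$ inside the conjugated Iwahori group $I_{m_g\cdot\a}=m_g I m_g^{-1}$. Since $U^+$ acts trivially on the $V_G$-factor (its elements are unipotent, hence lie in $G^1=\ker\nu_G$), the subgroup $\Stab_{U^+}(a_{m_g})$ is just $U^+\cap P_{\{m_g\cdot a_\circ\}}$, the fixator in $U^+$ of the point $m_g\cdot a_\circ$ of the reduced building. Running the same computation as in the proof of Lemma~\ref{funddomainU} — decomposing $P_{\{m_g\cdot a_\circ\}}=N_{\{m_g\cdot a_\circ\}}U^-_{\{m_g\cdot a_\circ\}}U^+_{\{m_g\cdot a_\circ\}}$ and using $N\cap U^+U^-=\{1\}$ — identifies this with $U^+_{\{m_g\cdot a_\circ\}}$. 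As $M$ normalizes $U^+$, conjugation by $m_g$ carries the $U^+$-fixator of $a_\circ$ onto that of $m_g\cdot a_\circ$, so with Lemma~\ref{BKI} (giving $U^+_{a_\circ}=I^+$) I would conclude
$$\Stab_{U^+}(a_{m_g})=m_g U^+_{a_\circ}m_g^{-1}=m_g I^+ m_g^{-1}\subseteq m_g I m_g^{-1}=I_{m_g\cdot\a}.$$

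Finally, $\cU_m a_{m_g}$ is by construction the formal sum of the vertices in the single $I_{m_g\cdot\a}$-orbit through $a_{mm_g}$, so left multiplication by any element of $I_{m_g\cdot\a}$ — in particular by $v\in\Stab_{U^+}(a_{m_g})$ — merely permutes that orbit and fixes the sum, which is exactly the fixed-point statement needed above. I expect no serious obstacle here: the one slightly delicate point is the identification $\Stab_{U^+}(a_{m_g})=m_g I^+ m_g^{-1}$, i.e. the realization that the $U^+$-stabilizer of the base vertex is forced into the Iwahori-type group whose orbits cut out the retraction fibers defining $\cU_m$ — and that is essentially the same manipulation of the decomposition $P=NU^-U^+$ and of conjugated fixators already carried out inside the proofs of Lemmas~\ref{funddomainU} and~\ref{equivariant}, so it should go through routinely.
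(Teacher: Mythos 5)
Your proof is correct and takes essentially the same route as the paper's: the paper conjugates the discrepancy $u_g^{-1}u$ back by $m_g$ and invokes $P_{\{a_\circ\}}^+\cap G^1\cap U^+=U_{a_\circ}^+=I^+$ before using that $\cU_m(a_\circ,0_{V_G})=I\cdot a_m$ is an $I$-orbit sum, whereas you conjugate forward, identifying $\Stab_{U^+}(a_{m_g})=m_g I^+m_g^{-1}\subset m_g I m_g^{-1}$ by rerunning the $N\,U^-U^+$ manipulation from Lemma~\ref{funddomainU}, and then conclude by the same invariance of the Iwahori-orbit sum. Your opening remark disposing of the $M^1$-ambiguity in $m_g$ is a harmless extra precision that the paper leaves implicit.
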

\begin{proof}
By Lemma \ref{equivariant}, we have $\cU_{m} a_g= u_gm_g \cdot \cU_{m} a_1$. 
Let $u\in U^+$ be another element such that $a_g= um_g \cdot (a_{\circ},0_{V_G})$. Therefore, 
by \cite[Proposition \ref{propertiesII3.3} and \ref{buildproperties}]{UoperatorsI2021}
$$m_g^{-1}u_g^{-1} um_g \in P_{\{a_\circ\}}^+\cap G^1\cap U^+=U_{a_\circ}^+=I^+.$$ 
But $\cU_{m} (a_\circ,0_{V_G})= I \cdot a_m$, hence $\cU_{m} a_g = u_gm_g \cdot \cU_{m} (a_\circ,0_{V_G})=um_g \cdot \cU_{m} (a_\circ,0_{V_G})$.
\end{proof}
\begin{corollary}\label{Bequivariance}
For any $m\in M^-$, we have
$$\cU_m\in \text{\emph{End}}_{\Z[B]}\Z[\cB_{\ext}^{\circ}].$$
\end{corollary}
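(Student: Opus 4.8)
The plan is to read this corollary purely as a consequence of Lemmas~\ref{funddomainU}, \ref{equivariant} and \ref{equivariantB}: those already furnish, for $m\in M^-$, a well-defined $\Z$-linear operator $\cU_m$ on $\Z[\cB_{\ext}^{\circ}]$, so the only thing left to establish is that $\cU_m$ commutes with the left translation action of $B$. Using $B=M\ltimes U^+$, I would fix $b\in B$ and write $b=m_0u_0$ with $m_0\in M$, $u_0\in U^+$, and fix $g\in G$, writing $a_g=u_g\cdot a_{m_g}$ with $u_g\in U^+$ and $a_{m_g}$ the unique vertex of $U^+\cdot a_g$ lying in $\cA_{\ext}^{\circ}$ (Lemma~\ref{funddomainU}). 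It then suffices to check $\cU_m(b\cdot a_g)=b\cdot\cU_m a_g$ on these basis vertices and extend $\Z$-linearly over the basis $\{a_g:g\in G\}$ of $\Z[\cB_{\ext}^{\circ}]$.

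First I would compute, using that $M$ normalizes $U^+$,
\[
b\cdot a_g=m_0u_0u_g\cdot a_{m_g}=(m_0u_0u_gm_0^{-1})\cdot a_{m_0m_g},
\]
where $m_0u_0u_gm_0^{-1}\in U^+$ and $a_{m_0m_g}\in\cA_{\ext}^{\circ}$; hence the pair attached by Lemma~\ref{funddomainU} to the vertex $b\cdot a_g$ is $(m_0u_0u_gm_0^{-1},a_{m_0m_g})$, and by the definition in Lemma~\ref{equivariantB},
\[
\cU_m(b\cdot a_g)=(m_0u_0u_gm_0^{-1})\cdot\cU_m a_{m_0m_g}.
\]
Next I would invoke the $M$-invariance of $\cU_m$ (Lemma~\ref{equivariant}, which is exactly where $m\in M^-$ enters) to rewrite $\cU_m a_{m_0m_g}=(m_0m_g)\cdot\cU_m a_1$ and $\cU_m a_{m_g}=m_g\cdot\cU_m a_1$; after the cancellation $m_0^{-1}m_0$ this yields
\[
\cU_m(b\cdot a_g)=(m_0u_0u_gm_g)\cdot\cU_m a_1=m_0u_0u_g\cdot\cU_m a_{m_g}=m_0u_0\cdot\cU_m a_g=b\cdot\cU_m a_g,
\]
which is the desired $B$-equivariance.

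The only point requiring care — and the only conceivable obstacle — is that $m_0$ and $m_g$ need not commute in $M$, so $m_0m_g$ and $m_gm_0$ differ by an element of $M_1\subset I\subset {I^1}$; but that element acts trivially on all the vertices occurring above, exactly as exploited in the proof of Lemma~\ref{equivariant}, so every displayed equality is to be read modulo this trivial $M_1$-action and no genuine difficulty arises. In short, the corollary is bookkeeping layered on the three preceding lemmas, with the normalization $M\trianglelefteq_{\mathrm{norm}}U^+$ and Lemma~\ref{equivariant} doing the real work.
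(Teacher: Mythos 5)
Your proof is correct and takes essentially the same route as the paper, which disposes of the corollary as a direct consequence of Lemmas \ref{equivariant} and \ref{equivariantB}; your computation with $b=m_0u_0$, $a_g=u_g\cdot a_{m_g}$ just makes that bookkeeping explicit. (Note that the non-commutation of $m_0$ and $m_g$ you flag at the end never actually intervenes, since your chain of equalities never swaps them.)
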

\begin{proof}
This is a straightforward consequence of Lemmas \ref{equivariant} and \ref{equivariantB}.\qedhere
\end{proof}
\begin{remark}
By definition, the map
$M^- \longrightarrow \text{End}_{\Z[B]}\Z[\cB_{\ext}^{\circ}],\quad m \longmapsto \mathcal{U}_m$
factorizes through the quotient ${\Lambda}_M^{-,1}:= M^-/M^1\subset  \Lambda_M/(\Lambda_{M})_{\text{tor}}$.\end{remark}
\begin{lemma}\label{additivitysuccesors}
For any $m,m'\in {\Lambda}_M^{-,1}$, we have
$$\cU_m \circ \,\cU_{m'}=\cU_{m+m'}.$$
In particular, the operators $\cU_ m$ and $ \cU_{m'}$ commute.
\end{lemma}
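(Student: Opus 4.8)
The plan is to reduce to evaluating both sides on the single vertex $a_1=(a_\circ,0_{V_G})$ and then conclude by equivariance. Both $\cU_m\circ\cU_{m'}$ and $\cU_{m+m'}$ lie in $\End_{\Z[B]}\Z[\cB_{\ext}^{\circ}]$ (for the composite because each factor does by Corollary \ref{Bequivariance}, for $\cU_{m+m'}$ directly by that corollary applied to a representative of $m+m'$ in $M^-$), and $\Z[\cB_{\ext}^{\circ}]$ is generated as a $\Z[B]$-module by $a_1$, since $\cB_{\ext}^{\circ}=G\cdot a_1=B\cdot a_1$ by the Iwasawa decomposition $G=BK$ (Theorem \ref{Iwasawadec}). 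So it will suffice to check that $\cU_m\big(\cU_{m'}(a_1)\big)=\cU_{m+m'}(a_1)$.

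Next I would unwind both sides using the explicit descriptions extracted in the proofs of Lemmas \ref{equivariant} and \ref{equivariantB}. By Lemma \ref{equivariant}, $\cU_{m'}(a_1)=\sum_{i\in I^+/m'I^+m'^{-1}} i\cdot a_{m'}$. Each summand $i\cdot a_{m'}$ is $a_{im'}$ with $i\in U^+$, so Lemma \ref{equivariantB} allows one to take its $U^+$-part equal to $i$ and its $M$-part equal to $m'$; combined again with Lemma \ref{equivariant} this gives $\cU_m(i\cdot a_{m'})=i\cdot\cU_m(a_{m'})=(im')\cdot\cU_m(a_1)=\sum_{j\in I^+/mI^+m^{-1}} im'jm\cdot(a_\circ,0_{V_G})$. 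Summing over $i$,
$$\cU_m\big(\cU_{m'}(a_1)\big)=\sum_{i\in I^+/m'I^+m'^{-1}}\ \sum_{j\in I^+/mI^+m^{-1}} im'jm\cdot(a_\circ,0_{V_G}).$$

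The heart of the argument is then a coset-counting step: write $im'jm=i\,(m'jm'^{-1})\,(m'm)$. Since $m'\in M^-$, conjugation by $m'$ carries $I^+$ into itself (\cite[Lemma \ref{normalizingI}]{UoperatorsI2021}), so $j':=m'jm'^{-1}$ ranges over a set of representatives of $m'I^+m'^{-1}/(m'm)I^+(m'm)^{-1}$ as $j$ ranges over representatives of $I^+/mI^+m^{-1}$. Writing $m'm=mm'\mu$ with $\mu\in M_1$ (legitimate since $\Lambda_M=M/M_1$ is abelian) and using that $M_1$ normalizes $I^+$ and fixes $(a_\circ,0_{V_G})$, one identifies $(m'm)I^+(m'm)^{-1}=(mm')I^+(mm')^{-1}$ and $(m'm)\cdot(a_\circ,0_{V_G})=(mm')\cdot(a_\circ,0_{V_G})$, with $mm'\in M^-$ because $\nu_M$ is a homomorphism and the defining inequalities of $M^-$ are thus preserved under products. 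Feeding the tower $(mm')I^+(mm')^{-1}\subset m'I^+m'^{-1}\subset I^+$ into the double sum, the products $k:=ij'$ run exactly once over a full set of representatives of $I^+/(mm')I^+(mm')^{-1}$, whence
$$\cU_m\big(\cU_{m'}(a_1)\big)=\sum_{k\in I^+/(mm')I^+(mm')^{-1}} k\,(mm')\cdot(a_\circ,0_{V_G})=\cU_{mm'}(a_1)=\cU_{m+m'}(a_1),$$
the last two equalities being the formula of Lemma \ref{equivariant} applied to $mm'\in M^-$ and the fact that $m\mapsto\cU_m$ factors through $\Lambda_M^{-,1}=M^-/M^1$. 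Commutativity then follows at once since $m+m'=m'+m$ in $\Lambda_M^{-,1}$.

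I expect the one delicate point to be precisely this last step: one must be sure that conjugation by $m'$ genuinely lands inside $I^+$ (so that $m'I^+m'^{-1}$ really sits as an intermediate subgroup in the tower) and that replacing the product $m'm$ actually occurring in the computation by the "additively correct" product $mm'$ affects neither the relevant conjugate of $I^+$ nor the vertex $(a_\circ,0_{V_G})$ --- i.e. the bookkeeping around the $M_1$-ambiguity concealed in the additive notation. Everything else is a direct substitution via Lemmas \ref{equivariant} and \ref{equivariantB}.
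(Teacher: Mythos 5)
Your proposal is correct and follows essentially the same route as the paper: reduction to the single vertex $a_1$ via $B$-equivariance and $\cB_{\ext}^{\circ}=B\cdot a_1$, expansion into the double sum $\sum_{i}\sum_{j} i m' j m\cdot(a_\circ,0_{V_G})$, and the coset-tower argument showing that the products $i\,(m'jm'^{-1})$ run once over representatives of $I^+/(m'm)I^+(m'm)^{-1}$. Your extra bookkeeping of the $m'm$ versus $mm'$ ambiguity via $M_1$ (which normalizes $I^+$ and fixes $(a_\circ,0_{V_G})$) is a harmless refinement of what the paper leaves implicit.
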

\begin{proof}Here we will abuse notation and use the same letter for a class in ${\Lambda}_M^{-,1}$ and a representative for it in $M$. 

Recall that the Iwasawa decomposition $G=BK$ yields $\cB_{\ext}^{\circ}= B \cdot (a_\circ,0_{V_G})$. Thus, by the $B$-equivariance of the operators $\cU_m$ and $\cU_{m'}$, it suffices to verify $\cU_m \circ \cU_{m'} a_{1}=\cU_{m+m'} a_{1}$ for $m,m' \in  {\Lambda}_M^{-,1}$. 
We thus have
\begin{align*}\cU_{m}\circ \cU_{m'} a_{1}&=\cU_{m} \sum_{ i' \in I^+/m' I^+ m'^{-1}} i'm' \cdot (a_{\circ},0_{V_G})\\
&= \sum_{ i' \in I^+/m' I^+ m'^{-1}} i'm'\, \cU_{m}  \cdot (a_{\circ},0_{V_G})&(B\text{-equivariance})\\
&=\sum_{ i' \in I^+/m' I^+ m'^{-1}}\sum_{ i \in I^+/ m I^+ m^{-1}} i'm'im  \cdot (a_{\circ},0_{V_G})
\end{align*}
Since $m, m'$ are chosen in $M^-$, we have
$$m'm I^+ m^{-1} m'^{-1} \subset m I^+ m^{-1} \subset I^+$$
and if $\mathscr{R}, \mathscr{R}' \subset I^+$ is a set of representatives for $I^+/m I^+m^{-1}$ resp. $I^+/m' I^+m'^{-1}$ , then
$$\mathscr{R}'' = \{i'm'im'^{-1} \colon i'\in \mathscr{R}' \text{ and }  i \in \mathscr{R} \}$$
is a set of representatives for $I^+/m'mI^+(m'm)^{-1}$. Therefore,
\begin{align*}\cU_{m}\circ \cU_{m'} a_1=\sum_{ i'' \in \mathscr{R}''} i'' \cdot a_{m+m'}&=\cU_{m+m'} a_1. \qedhere\end{align*}
\end{proof}
\begin{remark}Note that the definition of the above operators $\{\mathcal{U}_m \colon m\in {\Lambda}_M^{-,1}\}$ is independent from the alcove $\a$ and the special point $a_\circ$. 
Actually, such operators should be imagined (could be defined in the first place) as "successor" operators with respect to the point corresponding to $B$ in the building at infinity.\end{remark}
\begin{definition}
Define $\mathcal{U}\subset\text{\emph{End}}_{\Z[B]}\Z[\cB_{\ext}^{\circ}]$ to be the ring generated by the operators $\{\cU_{m}\colon m\in {\Lambda}_M^{-,1}\}$.
\end{definition}
\begin{corollary}\label{isomlambdageo}
The following map is a natural isomorphism of rings 
$$\begin{tikzcd}  \mathcal{R}_{\a}^{-,1}=\Z[{\Lambda}_M^{-,1}]\arrow{r}{\simeq}& \mathcal{U},& m \arrow[r, mapsto]& \mathcal{U}_m.\end{tikzcd}$$
\end{corollary}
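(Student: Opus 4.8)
The plan is to recognize this map as the embedding $\varphi_{\a}$ of Corollary \ref{injectU} read through the combinatorial identification $\Z[G/K^{1}]\cong\Z[\cB_{\ext}^{\circ}]$; once that identification is pinned down the statement is immediate, and I will also indicate a self-contained geometric argument for the injectivity. First, the easy half: the monoid $\Lambda_M^{-,1}=M^{-}/M^{1}$ is abelian with neutral element $0$, so $\mathcal{R}_{\a}^{-,1}=\Z[\Lambda_M^{-,1}]$ is a commutative ring with $\Z$-basis $\{m:m\in\Lambda_M^{-,1}\}$; by Lemma \ref{additivitysuccesors} (together with $\mathcal{U}_0=\id$) the rule $m\mapsto\mathcal{U}_m$ extends $\Z$-linearly to a ring homomorphism $\mathcal{R}_{\a}^{-,1}\to\End_{\Z[B]}\Z[\cB_{\ext}^{\circ}]$, using Corollary \ref{Bequivariance} for the target. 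The same lemma shows that a product of generators $\mathcal{U}_m$ is again some $\mathcal{U}_{m''}$, so the subring $\mathcal{U}$ they generate equals the $\Z$-span of $\{\mathcal{U}_m\}$, i.e. the image of this homomorphism; thus the map is surjective onto $\mathcal{U}$ by construction, and only injectivity remains.

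Next I would make the identification precise. Since $K^{1}=\widetilde{K}$ is exactly the $G$-stabilizer of $(a_{\circ},0_{V_G})$ (an element fixing it lies in $\ker\nu_G=G^{1}$, which acts trivially on $V_G$, and then fixes $a_{\circ}$ in $\cB_{\red}$), the orbit map gives a $G$-equivariant bijection $G/K^{1}\xrightarrow{\sim}\cB_{\ext}^{\circ}$, $gK^{1}\mapsto g\cdot(a_{\circ},0_{V_G})$, hence a $\Z[B]$-linear isomorphism $\Z[G/K^{1}]\xrightarrow{\sim}\Z[\cB_{\ext}^{\circ}]$. This isomorphism carries $u_m^{1}=\varphi_{\a}(m)$ to $\mathcal{U}_m$ for every $m\in\Lambda_M^{-,1}$: both sides are $\Z[B]$-linear, and on the generator $\mathbf 1_{bK^{1}}\leftrightarrow a_b=b\cdot(a_{\circ},0_{V_G})$ with $b\in B$ they are given by the same formula, namely $\mathbf 1_{bK^{1}}\bullet m=\sum_{i\in I^{+}/mI^{+}m^{-1}}\mathbf 1_{bimK^{1}}$ (Remark \ref{functo216} with $J=\emptyset$, $U_J^{+}=U^{+}$ and $U^{+}\cap K=I^{+}$) versus $\mathcal{U}_m a_b=\sum_{i\in I^{+}/mI^{+}m^{-1}}(bim)\cdot(a_{\circ},0_{V_G})$ (Lemmas \ref{equivariant} and \ref{equivariantB}). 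Consequently the map $m\mapsto\mathcal{U}_m$ is identified with $\varphi_{\a}\colon\mathcal{R}_{\a}^{-,1}\hookrightarrow\End_{\Z[B]}\Z[G/K^{1}]$, which is injective by Corollary \ref{injectU}; together with the surjectivity above this yields the isomorphism of rings, and the naturality is built into the construction.

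For a geometric proof of injectivity not relying on $\varphi_{\a}$: given a relation $\sum_m c_m\mathcal{U}_m=0$ with $c_m\in\Z$, evaluate at $a_1=(a_{\circ},0_{V_G})$, where by the proof of Lemma \ref{equivariant} one has $\mathcal{U}_m a_1=\sum_{i\in I^{+}/mI^{+}m^{-1}}i\cdot a_m$, and then apply the retraction $r_{\cA_{\ext},\a}$ extended $\Z$-linearly to $\Z[\cB_{\ext}^{\circ}]\to\Z[\cA_{\ext}^{\circ}]$. Since $I^{+}\subset I$ and the $I$-orbits are precisely the fibres of $r_{\cA_{\ext},\a}$ (Lemma \ref{Iwretract}), every term $i\cdot a_m$ retracts to $a_m$, so $r_{\cA_{\ext},\a}(\mathcal{U}_m a_1)=[I^{+}:mI^{+}m^{-1}]\,a_m$; hence $\sum_m c_m[I^{+}:mI^{+}m^{-1}]\,a_m=0$ in $\Z[\cA_{\ext}^{\circ}]$. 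The points $a_m$ for distinct $m\in\Lambda_M^{-,1}\subset\Lambda_M^{1}$ are distinct (the $M$-stabilizer of $(a_{\circ},0_{V_G})$ is the maximal compact $M^{1}$), hence distinct elements of the canonical basis of $\Z[\cA_{\ext}^{\circ}]$, and each index $[I^{+}:mI^{+}m^{-1}]$ is a nonzero integer (in fact a power of $q$, by Lemma \ref{UKUKtilde} and the description of $f_{\a}$ in Theorem \ref{decompositionwithLevi}), so $c_m=0$ for all $m$.

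In either approach the only step requiring genuine care is the bookkeeping of the second paragraph — matching the geometric construction of $\mathcal{U}_m$ (built from retractions and Iwahori orbits) with the group-theoretic operator $u_m^{1}$, equivalently comparing the two explicit formulas on $\Z[B]$-generators of $\Z[\cB_{\ext}^{\circ}]$, respectively $\Z[G/K^{1}]$. There is no deeper obstacle: surjectivity is formal from Lemma \ref{additivitysuccesors}, and injectivity is either inherited from Corollary \ref{injectU} or proved directly by the retraction computation above.
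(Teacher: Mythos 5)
Your proposal is correct, but you handle the key step (injectivity) by a genuinely different mechanism than the paper. The paper's own proof is short and purely geometric: it shows that the kernel of $m \mapsto \cU_m$ on $M^-$ is exactly $M^1$, by observing (via Lemma \ref{Iwretract}) that the retraction $r_{\cA_{\ext},\a}$ has a singleton fibre only over points of $\overline{\a}\times V_G$, and that $\overline{\a}$ meets $M^-\cdot(a_\circ,0_{V_G})$ only in $(a_\circ,0_{V_G})$ itself (comparing the inequalities defining $\overline{\a}$ with those defining $M^-$); it then concludes with Lemma \ref{additivitysuccesors}, and the full $\Z$-linear independence of the $\cU_m$ is supplied separately right afterwards by Lemma \ref{geometricfaithfulness} via the Cartan decomposition. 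You never make that kernel argument; instead you prove injectivity either by transporting the problem to $\Z[G/K^{1}]$ — identifying $\cU_m$ with $\varphi_{\a}(m)$ through the orbit bijection $G/K^{1}\simeq\cB_{\ext}^{\circ}$ and invoking Corollary \ref{injectU}/Lemma \ref{faithfulness}, which rests on $I^{1}\backslash G/K^{1}\cong \Lambda_M^{1}$ — or by a direct computation: pushing $\sum_m c_m\,\cU_m a_1$ forward along the linearized retraction gives $\sum_m c_m\,[I^+:mI^+m^{-1}]\,a_m$, and the points $a_m$ are pairwise distinct. Neither route is circular: the compatibility of $u_m^{1}$ with $\cU_m$ that you check on $\Z[B]$-generators is exactly the identification the paper establishes later in Theorem \ref{U=A}, but you verify it directly rather than deducing it from this corollary, and Lemma \ref{faithfulness} is independent of the statement at hand. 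What the paper's argument buys is independence from the Hecke-theoretic side; what yours buys is that both of your arguments prove outright the linear independence of the operators $\cU_m$ (the actual content of injectivity of the ring map), whereas the paper's kernel claim by itself only pins down when $\cU_m$ is the identity and is complemented by Lemma \ref{geometricfaithfulness} — indeed your retraction computation can serve as a cleaner substitute for that lemma.
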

\proof
We claim that the kernel of the map $M^- \longrightarrow \mathcal{U}$ given by $m\mapsto \cU_m$ is precisely $M^1$. {Indeed, we have:
\begin{enumerate}[nosep]
\item The only points of the apartment where the alcove based retraction $r_{\cA_{\ext},\a}$ has trivial fiber (i.e. reduced to the point itself) is the closure of the alcove $\a$ and this is true by Lemma \ref{Iwretract}.
\item The closure of the alcove $\a \times V_G$ contains a single point of $M^-\cdot  (a_\circ,0_{V_G})$, namely $(a_0,0_{V_G})$ itself. Indeed, as in \cite[Example \ref{exfomega}]{UoperatorsI2021} we know that 
$$\overline{\a}= \{a\in \A_{\red}(\G,\Sbf)\colon 0\le  \alpha(a-a_\circ) \le n_{\alpha}^{-1}  \text{ for all } \alpha \in \Phi_{\red}^+\}.$$
But $M^-=\{m \in M\colon \alpha(\nu(m)+a_\circ) \le 0 , \,\, \forall \alpha \in \Phi_{\red}^+\}$, thus $\overline{\a} \cap (M^- \cdot  (a_\circ,0_{V_G}))=\{  (a_\circ,0_{V_G})\}$.
\end{enumerate}}
In conclusion, the above claim together with Lemma \ref{additivitysuccesors} show the corollary.\qed
\begin{lemma}\label{geometricfaithfulness}The geometric action of $\mathcal{U}$ on $\Z[\cB_{\ext}^\circ]$ is faithful.
\end{lemma}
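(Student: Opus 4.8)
The plan is to reduce the faithfulness of the action of $\mathcal{U}$ on $\Z[\cB_{\ext}^{\circ}]$ to the injectivity of a single evaluation map, and then to deduce that injectivity from the disjointness of the supports of the basis vectors $\cU_m\cdot(a_\circ,0_{V_G})$.

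First I would record that, since $\cB_{\ext}^{\circ}=B\cdot(a_\circ,0_{V_G})$ (which follows from the Iwasawa decomposition $G=BK$, Proposition~\ref{Iwasawadec}) and every element of $\mathcal{U}$ is $B$-equivariant (Corollary~\ref{Bequivariance}), any operator $\Phi\in\mathcal{U}$ with $\Phi\cdot(a_\circ,0_{V_G})=0$ satisfies $\Phi\cdot\big(b\cdot(a_\circ,0_{V_G})\big)=b\cdot\big(\Phi\cdot(a_\circ,0_{V_G})\big)=0$ for every $b\in B$, hence vanishes on all of $\Z[\cB_{\ext}^{\circ}]$. Thus it suffices to prove that the $\Z$-linear evaluation map $\mathcal{U}\to \Z[\cB_{\ext}^{\circ}]$, $\Phi\mapsto \Phi\cdot(a_\circ,0_{V_G})$, is injective.

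By Corollary~\ref{isomlambdageo} the family $\{\cU_m : m\in\Lambda_M^{-,1}\}$ is a $\Z$-basis of $\mathcal{U}$, so it is enough to show that the vectors $\cU_m\cdot(a_\circ,0_{V_G})$, $m\in\Lambda_M^{-,1}$, are $\Z$-linearly independent in $\Z[\cB_{\ext}^{\circ}]$. Unwinding the definition of the geometric operators and invoking Lemma~\ref{Iwretract}, one has $\cU_m\cdot(a_\circ,0_{V_G})=\sum_{v\in r_{\cA_{\ext},\a}^{-1}(a_m)} v$, a finite formal sum of vertices of $\cB_{\ext}^{\circ}$ (finiteness because $m\in M^-$ forces $[I^+:mI^+m^{-1}]<\infty$, as in the computation in Lemma~\ref{equivariant}), and it is nonzero because $a_m$ itself lies in this fiber (the retraction $r_{\cA_{\ext},\a}$ restricts to the identity on $\cA_{\ext}$). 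For two distinct classes $m\neq m'$ in $\Lambda_M^{-,1}$ the points $a_m,a_{m'}\in\cA_{\ext}^{\circ}$ are distinct --- this is precisely the injectivity of $m\mapsto a_m$ on antidominant classes that is established inside the proof of Corollary~\ref{isomlambdageo} (from the description of $\overline{\a}$ together with $\overline{\a}\cap(M^-\cdot(a_\circ,0_{V_G}))=\{(a_\circ,0_{V_G})\}$) --- so the fibers $r_{\cA_{\ext},\a}^{-1}(a_m)$ and $r_{\cA_{\ext},\a}^{-1}(a_{m'})$ are disjoint, and therefore the supports of $\cU_m\cdot(a_\circ,0_{V_G})$ and $\cU_{m'}\cdot(a_\circ,0_{V_G})$ are disjoint subsets of $\cB_{\ext}^{\circ}$. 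A family of nonzero elements of the free $\Z$-module $\Z[\cB_{\ext}^{\circ}]$ with pairwise disjoint supports is automatically $\Z$-linearly independent, which gives the injectivity sought above, and hence the lemma.

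The only step requiring genuine care is the injectivity of $m\mapsto a_m$ on $\Lambda_M^{-,1}$; everything else is bookkeeping. Since that injectivity has already been extracted in the proof of Corollary~\ref{isomlambdageo}, I would simply cite it from there. Alternatively one could bypass Corollary~\ref{isomlambdageo} entirely and run the disjoint-support computation above so as to reprove the $\Z$-linear independence of the $\cU_m$ directly; I nonetheless prefer to reuse the corollary.
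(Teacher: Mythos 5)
Your proof is correct, but it separates the supports by a genuinely different mechanism than the paper. The paper's own argument (explicitly modeled on Lemma~\ref{faithfulness}) saturates each $\cU_{m_i}a_1$ by ${K^1}$, rewrites the result as $\sum_i s_i\,({K^1}m_i{K^1})\cdot a_1$, and then invokes the Cartan decomposition ${K^1}\backslash G/{K^1}\cong \Lambda_M^{-,1}$ to see that these ${K^1}$-orbits are pairwise disjoint, forcing all $s_i=0$. You instead use what is built into the definition together with Lemma~\ref{Iwretract}: $\cU_m a_1$ is precisely the sum over the fiber $r_{\cA_{\ext},\a}^{-1}(a_m)$ of one fixed retraction, and since $m\mapsto a_m$ is injective on $\Lambda_M^{-,1}$ (the kernel of the translation action of $M$ on $\cA_{\ext}$ is exactly $M^1$), distinct classes give fibers over distinct points of $\cA_{\ext}^{\circ}$, hence literally disjoint nonempty supports, and linear independence is immediate. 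Your route is more elementary --- no Cartan decomposition is needed --- and it also makes explicit the reduction, via $B$-equivariance and $\cB_{\ext}^{\circ}=B\cdot a_1$, to injectivity of evaluation at $a_1$, which the paper leaves implicit; the paper's route buys uniformity with the group-theoretic Lemma~\ref{faithfulness}. One stylistic caution: you quote Corollary~\ref{isomlambdageo} for the basis property of $\{\cU_m\}$, but all your argument needs is that these operators span $\mathcal{U}$ over $\Z$, which is already given by Lemma~\ref{additivitysuccesors}; citing only the spanning statement avoids any appearance of circularity, since your disjoint-support computation in effect reproves the linear independence that underlies Corollary~\ref{isomlambdageo} --- as you yourself observe in your closing remark.
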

\proof This is morally similar to the proof of Lemma \ref{faithfulness}. 
Let $m_1, \cdots, m_r\in {\Lambda}_M^{-,1}$ distinct and $s_1,\cdots, s_r\in \Z$. We then have
${K^1} \cdot \sum_is_i \, \mathcal{U}_{m_i} a_1
 =  \sum_is_i \,  ({K^1}m_i{K^1}) \cdot a_1
$. 
Hence, if ${K^1} \cdot \sum_is_i \, \mathcal{U}_m a_1=a_1$, then by Cartan decomposition (\cite[Proposition \ref{cartanK}]{UoperatorsI2021}), one must have $s_1=1$ if $m_i =1 \in \Lambda_M^-$ and $s_i=0$ otherwise. \qed

Combining Lemmas \ref{Bequivariance}, \ref{additivitysuccesors} and \ref{geometricfaithfulness} we get a natural embedding of rings 
\begin{equation}\label{Uembddinggeometric}\begin{tikzcd}\mathcal{U} \arrow[r,hook] & \End_{\Z[B]}\Z[\cB_{\ext}^\circ] 
.\end{tikzcd}\tag{$\dag$}\end{equation}
\begin{theorem}\label{U=A}
The subring $\mathcal{U}\subset \End_{\Z[B]}\Z[\cB_{\ext}^\circ]$ is integral over $\End_{{\Z}[G]}\Z[\cB_{\ext}^\circ]$.
\end{theorem}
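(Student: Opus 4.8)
The plan is to deduce Theorem~\ref{U=A} from the algebraic integrality statement Theorem~\ref{integralitiyoperatorsu} (applied with $\cF=\a$ and $M^\flat=M^1$), by promoting the ring isomorphism $\mathcal{R}_{\a}^{-,1}\simeq\mathcal{U}$ of Corollary~\ref{isomlambdageo} to a $G$-equivariant identification of the ambient modules $\Z[\cB_{\ext}^{\circ}]\simeq\Z[G/K^1]$. With $\cF=\a$ (so $P_\cF=B$) and $M^\flat=M^1$, Theorem~\ref{integralitiyoperatorsu} says exactly that every $u\in\mathbb{U}^1$ is a right root of a monic polynomial $\sum h_kX^k$ with $h_k\in\End_{\Z[G]}\Z[G/K^1]$, the identity $\sum h_k\circ u^k=0$ holding inside $\End_{\Z[B]}\Z[G/K^1]$; so it is enough to transport this along a suitable isomorphism.

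First I would identify the $G$-set $\cB_{\ext}^{\circ}=G\cdot(a_\circ,0_{V_G})$ with $G/K^1$. The key point is that $K^1=\widetilde{K}$ is precisely the $G$-stabilizer of $(a_\circ,0_{V_G})$ in $\cB_{\ext}$: any $g\in G$ fixing $(a_\circ,0_{V_G})$ fixes its $V_G$-coordinate, hence $\nu_G(g)=0$, i.e. $g\in G^1$, and the $G^1$-fixator of $a_\circ$ is $K^1$ by definition; conversely $K^1=M^1K\subseteq G^1$ fixes $a_\circ$ because $\nu_M$ kills the compact group $M^1$. Thus $gK^1\mapsto a_g:=g\cdot(a_\circ,0_{V_G})$ is a well-defined $G$-equivariant bijection $G/K^1\riso\cB_{\ext}^{\circ}$, which induces a $\Z[G]$-module isomorphism $\Z[G/K^1]\riso\Z[\cB_{\ext}^{\circ}]$ and hence ring isomorphisms $\End_{\Z[G]}\Z[G/K^1]\simeq\End_{\Z[G]}\Z[\cB_{\ext}^{\circ}]$ and $\End_{\Z[B]}\Z[G/K^1]\simeq\End_{\Z[B]}\Z[\cB_{\ext}^{\circ}]$ fitting in a commutative square (a $G$-equivariant endomorphism being in particular $B$-equivariant).

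Next I would verify that this identification sends the geometric operator $\cU_m$ to $u_m^1=\varphi_\a(m)\in\mathbb{U}^1$ for each $m\in\Lambda_M^{-,1}$. By the Iwasawa decomposition $G=BK^1$ (Theorem~\ref{Iwasawadec}) it suffices to compare the two operators on $\cB_{\ext}^{\circ}=B\cdot(a_\circ,0_{V_G})$. Given $g\in G$, write $a_g=u_g\cdot a_{m_g}$ with $u_g\in U^+$, $m_g\in M$ as in Lemma~\ref{funddomainU}; then $u_gm_g\in U^+M\subseteq B$ and $gK^1=u_gm_gK^1$. The computation in the proof of Lemma~\ref{equivariant}, combined with Lemma~\ref{equivariantB}, gives $\cU_m a_g=\sum_{i\in I^+/mI^+m^{-1}}a_{u_gm_g\,i\,m}$, whereas the explicit formula of Remark~\ref{functo216} and Corollary~\ref{injectU} gives $u_m^1({\bf 1}_{u_gm_gK^1})=\sum_{i\in I^+/mI^+m^{-1}}{\bf 1}_{u_gm_g\,i\,m\,K^1}$; these match under $a_h\leftrightarrow{\bf 1}_{hK^1}$. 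Since $\mathcal{U}$ and $\mathbb{U}^1$ are the subrings generated by the $\cU_m$, respectively the $u_m^1$ — each being the image of $\mathcal{R}_{\a}^{-,1}=\Z[\Lambda_M^{-,1}]$ under a ring homomorphism (Corollary~\ref{isomlambdageo} and Corollary~\ref{injectU}) — the previous identification restricts to a ring isomorphism $\mathcal{U}\simeq\mathbb{U}^1$ compatible with the two endomorphism-ring identifications.

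Finally I would conclude: for $\cU\in\mathcal{U}$ corresponding to $u\in\mathbb{U}^1$, Theorem~\ref{integralitiyoperatorsu} produces a monic polynomial $\sum h_kX^k$ over $\End_{\Z[G]}\Z[G/K^1]$ with $\sum h_k\circ u^k=0$ in $\End_{\Z[B]}\Z[G/K^1]$; transporting this relation through the isomorphisms of the previous two steps yields a monic polynomial over $\End_{\Z[G]}\Z[\cB_{\ext}^{\circ}]$ having $\cU$ as a right root, which is the assertion. I expect the only genuine work to lie in the second step — identifying the $G$-stabilizer of $(a_\circ,0_{V_G})$ with $K^1$ and checking that the geometric and Hecke-theoretic formulas for $\cU_m$ and $u_m^1$ agree on the whole orbit $\cB_{\ext}^{\circ}$, not just on the apartment slice $\cA_{\ext}^{\circ}$; everything downstream is transport of structure.
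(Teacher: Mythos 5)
Your proposal is correct and follows essentially the same route as the paper's own proof: identify $\Z[\cB_{\ext}^{\circ}]$ with $\Z[G/K^1]$ $G$-equivariantly, match $\cU_m$ with $u_m^1$ via the explicit formulas (the paper checks this only at the base point $\mathbf{1}_{K^1}$, invoking Corollary \ref{isomlambdageo} and $B$-equivariance, while you spell it out on the whole orbit), and transport the integrality of Theorem \ref{Zintegralitiyoperatorsutilde} through the resulting identifications of endomorphism rings.
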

\begin{proof}
We have by definition $G/{K^1}\simeq \cB_{\ext}^\circ $. 

Thanks to Corollary \ref{isomlambdageo}, we have a canonical isomorphism of rings
$$\mathbb{U}^1 \iso \mathcal{U}$$
given on basis elements by ${h_{m,\a}^1} \mapsto \mathcal{U}_m$ for all $m\in {\Lambda}_M^{-,1}$. 
This identification between ${\mathbb{U}}^1$ and $\mathcal{U}$ is compatible with their respective actions on $\Z[G/{K^1}]$ and $\Z[\cB_{\ext}^\circ]$. 
Indeed,  for any $m\in {\Lambda}_M^{-,1}$ we have $$\cU_{m} a_\circ=\sum_{ i \in {I^1}^+/m {I^1}^+ m^{-1}} i \cdot a_{m} \text{ and }{\bf 1}_{{K^1}}\bullet {h_{m,\a}^1}= \sum_{ i \in {I^1}^+/m {I^1}^+ m^{-1}} {\bf 1}_{i m {K^1}}.$$
Therefore, the $G$-equivariant isomorphism $\Z[G/{K^1}] \to \Z[\cB^\circ_{\ext}]$ yields an isomorphism of rings 
$$\End_{\Z[B]}\Z[G/K^1] \iso \End_{\Z[B]}\Z[\cB^\circ_{\ext}],$$
which identifies $\mathbb{U}^1$ with $\mathcal{U}$ and $\End_{\Z[G]}\Z[G/K^1]$ with $\End_{\Z[G]}\Z[\cB^\circ_{\ext}]$. 
Accordingly, the integrality of ${\mathbb{U}^1}$ over $\End_{\Z[G]}\Z[G/K^1]$ (Theorem \ref{Zintegralitiyoperatorsutilde}) is equivalent to the integrality of $\mathcal{U}$ over $\End_{\Z[G]}\Z[\cB_{\ext}^\circ]$.
\end{proof}
\section{Filtrations and \texorpdfstring{$\mathcal{U}$}{U}-operators}\label{filtrationandU}
In this subsection we will present yet another alternative point of view for the geometric operators ring $\mathcal{U}$. The main goal here is Theorem \ref{Uopfiltration}, which answers a question suggested by C. Cornut.
\subsection{Definition}
Set $\mathbf{F}=\mathbf{F}(G)$ to be the set of all $\R$-filtrations on $G=\G(F)$ \cite[page 77]{Cornut17}, this is the vectorial Tits building $\mathbf{F}$ \cite[Chapter 4]{Cornut17}. By \cite[\S 6.2]{Cornut17}, the extended Bruhat--Tits building $\mathcal{B}_{\ext}=\mathcal{B}(\G,F)_{\ext}$ is an affine $\mathbf{F}$-space \cite[\S 5.2.1.]{Cornut17}, in particular there is a $G$-equivariant right "action" of the vectorial Tits building $\mathbf{F}$ on $\mathcal{B}_{\ext}$,
$$\begin{tikzcd}[column sep= tiny]\mathcal{B}_{\ext}& \times & \mathbf{F} \arrow{rrr}&&& \mathcal{B}_{\ext} &&&(a,\mathcal{F})\arrow[rrr, mapsto] &&& a+\mathcal{F}.\end{tikzcd}$$
Define $\alpha_{\mathcal{F}}(a) = a+\mathcal{F}$ and let $\mathbf{F}^{\circ}$ be the subset of all filtrations $\mathcal{F} \in \mathbf{F}$ such that $\alpha_{\mathcal{F}}(\mathcal{B}_{\ext}^\circ)=\mathcal{B}_{\ext}^\circ$. Thus we get a right "action":
$$\begin{tikzcd}[column sep= tiny]\mathcal{B}_{\ext}^\circ& \times& \mathbf{F}^{\circ} \arrow{rrr}&&& \mathcal{B}_{\ext}^\circ &&&(a,\mathcal{F})\arrow[rrr, mapsto] &&&\alpha_{\mathcal{F}}(a)=a+ \mathcal{F}.\end{tikzcd}$$
\begin{example}\text{\em \cite[\S 6.1]{Cornut17}} Let $\cV\neq 0$ be a $F$-vector space of dimension $n \in \N$ and $\G=\GL(\cV)$. Set
$$\Sbf(\cV):=\{\mathcal{S} \subset \mathbb{P}^1(\cV)(F)\colon \cV=\oplus_{L\in \mathcal{S}}L\},$$
$$ \mathbf{F}(\cV):=\{\text{$\R$-filtrations on $V$}\}.$$
Since $F$ is locally compact, any $F$-norm on $\cV$ is splittable by $\mathcal{S} \in \Sbf(\cV)$, that is
$$\forall v \in \cV \colon \quad  \alpha(v)=\max\{\alpha(v_L)\colon L\in \mathcal{S}\},\text{ where }v=\sum_{L\in \mathcal{S}}v_L, v_L\in L.$$
The extended building $\cB(\GL(\cV),F)_{\ext}$ identifies naturally with $\cB(\cV)$; the set of all splittable (by some $\mathcal{S} \in \Sbf(\cV)$) $F$-norms on $\cV$. 
The group $G$ acts on $\cB(\cV)$ by $(g\cdot \alpha)(v)=\alpha(g^{-1}v)$, for any $g\in G$ and any splittable $F$-norm $\alpha$ on $\cV$. 
The action of $\mathbf{F}(G)\simeq\mathbf{F}(\cV)$ on $\cB(\cV)$ is described as follows: For any $\cF \in \mathbf{F}(\cV)$ and $\alpha \in \cB(\cV)$, there exists $\mathcal{S} \in \Sbf(\cV)$ such that $\alpha$ is splittable by $\mathcal{S}$ and $\cF\in  \mathbf{F}({\mathcal{S}})$, the action is then given by 
$$(\alpha+\cF)(v)=\max\left  \{ q^{-\cF(L)} \alpha(v_{L})\colon L\in \mathcal{S} \right\}.$$
\end{example}
\begin{definition}
Define 
$$\beta_{\mathcal{F}}(b)=\sum_{\alpha_{\mathcal{F}}(a)=b}a.$$\end{definition}
Let $P_{\mathcal{F}}$ be the stabilizer of $\mathcal{F}$ in $G$, this is the group of $F$-points of a parabolic subgroup of $\G$ \cite[\S 2.2.8]{Cornut17}. Then by $G$-equivariance we see that
$$\alpha_{\mathcal{F}},\beta_{\mathcal{F}}\in \text{{End}}_{P_{\mathcal{F}}}\left(\Z[\mathcal{B}_{\ext}^\circ]\right).$$
Set $\mathbf{F}^{\circ}(B)=\{\mathcal{F}\in \mathbf{F}^{\circ}\colon B\subset P_{\mathcal{F}}\}$, then
\begin{lemma}For any $\mathcal{F,G}$ in $\mathbf{F}^{\circ}(B)$, we have
$$\alpha_{\mathcal{F}}\circ \alpha_{\mathcal{G}}=\alpha_{\mathcal{G}} \circ \alpha_{\mathcal{F}} \text{ and }\beta_{\mathcal{F}}\circ \beta_{\mathcal{G}}=\beta_{\mathcal{G}} \circ \beta_{\mathcal{F}} \text{ in }\End_{\Z[B]}\Z[\mathcal{B}_{\ext}^\circ].$$
\end{lemma}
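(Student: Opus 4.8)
The plan is to reduce the commutation of the $\beta$-operators to the already-established commutativity of the geometric $\mathbb U$-operators in Lemma~\ref{additivitysuccesors}, and symmetrically to handle the $\alpha$-operators. First I would make the dictionary precise: given $\mathcal F \in \mathbf F^{\circ}(B)$, the map $\alpha_{\mathcal F}$ is a retraction-type contraction of $\mathcal B_{\ext}^{\circ}$, and $\beta_{\mathcal F}(b) = \sum_{\alpha_{\mathcal F}(a)=b} a$ is the associated "spreading" operator collecting the fiber. The key observation is that, up to reindexing the apartment $\cA_{\ext}^{\circ} = M\cdot(a_\circ,0_{V_G})$, each $\beta_{\mathcal F}$ with $\mathcal F \in \mathbf F^{\circ}(B)$ coincides with one of the operators $\cU_m$ (for the $m\in \Lambda_M^{-,1}$ determined by the "jumps" of $\mathcal F$ read off in the chosen apartment): indeed $\mathbf F^{\circ}$ is exactly the set of filtrations preserving $\mathcal B_{\ext}^{\circ}$, the condition $B\subset P_{\mathcal F}$ singles out the dominant chamber, and the fiber $r_{\cA_{\ext}, m'\cdot \a}^{-1}(a_{mm'})$ defining $\cU_m a_{m'}$ is visibly a fiber of a filtration-translate. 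So one first checks $\beta_{\mathcal F} = \cU_{m(\mathcal F)}$ on the apartment, then extends by $B$-equivariance (both sides lie in $\End_{\Z[B]}\Z[\mathcal B_{\ext}^{\circ}]$ and $\mathcal B_{\ext}^{\circ}=B\cdot(a_\circ,0_{V_G})$ by the Iwasawa decomposition, Theorem~\ref{Iwasawadec}).

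Once that identification is in place, for $\mathcal F, \mathcal G \in \mathbf F^{\circ}(B)$ we get $\beta_{\mathcal F}\circ\beta_{\mathcal G} = \cU_{m(\mathcal F)}\circ \cU_{m(\mathcal G)} = \cU_{m(\mathcal F)+m(\mathcal G)} = \cU_{m(\mathcal G)}\circ\cU_{m(\mathcal F)} = \beta_{\mathcal G}\circ\beta_{\mathcal F}$ by Lemma~\ref{additivitysuccesors}, which already records $\cU_m\circ\cU_{m'}=\cU_{m+m'}=\cU_{m'}\circ\cU_m$ for $m,m'\in\Lambda_M^{-,1}$. For the $\alpha$-operators, I would argue dually: $\alpha_{\mathcal F}$ is the map sending $a$ to $a+\mathcal F$, and $\alpha_{\mathcal F}\circ\beta_{\mathcal F}$ is multiplication by the common fiber-size $[I^+ : mI^+m^{-1}]$ (a constant on $\mathcal B_{\ext}^{\circ}$), so $\alpha_{\mathcal F}$ is recovered from $\beta_{\mathcal F}$ up to this scalar together with the combinatorial structure; alternatively, directly, $\alpha_{\mathcal F}(a_{m'}) = a_{m'm(\mathcal F)^{-1}}$-type formulas show the $\alpha_{\mathcal F}$ translate points within the apartment by commuting translations in $\Lambda_M$, and $B$-equivariance propagates the commutation to all of $\mathcal B_{\ext}^{\circ}$. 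Either route gives $\alpha_{\mathcal F}\circ\alpha_{\mathcal G}=\alpha_{\mathcal G}\circ\alpha_{\mathcal F}$.

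The main obstacle I expect is not the algebra but the \emph{bookkeeping of the identification $\mathcal F \mapsto m(\mathcal F)$}: one must check that restricting $\mathcal F$ to the standard apartment produces a well-defined element of $\Lambda_M^{-,1}= M^-/M^1$, that the condition $\mathcal F\in\mathbf F^{\circ}$ is exactly what forces integrality of the jumps (so $m(\mathcal F)\in M$ rather than in $\Mbf(F)\otimes\R$), and that $B\subset P_{\mathcal F}$ forces dominance $m(\mathcal F)\in M^-$. This amounts to unwinding the affine $\mathbf F$-space structure on $\mathcal B_{\ext}$ from \cite[\S 5.2.1, \S 6.2]{Cornut17} and matching its fibers with the Iwahori-retraction fibers of Lemma~\ref{Iwretract}; once that match is granted, everything else is a formal consequence of Lemma~\ref{additivitysuccesors} and the $B$-equivariance established in Corollary~\ref{Bequivariance}. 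A cleaner packaging, which I would adopt if the direct matching gets unwieldy, is to observe that both $\{\beta_{\mathcal F}\}_{\mathcal F\in\mathbf F^{\circ}(B)}$ and $\{\cU_m\}_{m\in\Lambda_M^{-,1}}$ generate, inside $\End_{\Z[B]}\Z[\mathcal B_{\ext}^{\circ}]$, the image of the commutative ring $\Z[\Lambda_M^{-,1}]$ under a single faithful action (Lemma~\ref{geometricfaithfulness}, Corollary~\ref{isomlambdageo}), so commutativity is inherited from that of $\Z[\Lambda_M^{-,1}]$.
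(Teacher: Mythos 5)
Your plan for the $\beta$-part runs through a dictionary $\mathcal F\mapsto m(\mathcal F)$, $\beta_{\mathcal F}=\cU_{m(\mathcal F)}$, and then invokes Lemma \ref{additivitysuccesors}. This is not the paper's argument, and as written it has two genuine gaps. First, the identification $\beta_{\mathcal F}=\cU_{m}$ is not "visible": it is precisely Theorem \ref{Uopfiltration}, whose proof requires the fiber description of Lemma \ref{formula} (producing the unipotent elements $u\in U_{\mathcal F}$ that move one preimage to another) together with Lemma \ref{bijection42}; nothing in your proposal supplies this, so the essential content of your reduction is deferred to the very result the section is building towards (the logic is not circular, since Theorem \ref{Uopfiltration} does not use the present lemma, but your "proof" then consists mostly of that later, harder material). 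Second, Theorem \ref{Uopfiltration} only covers the filtrations $\mathcal F_m$ attached to $-\nu_M(m)$, $m\in M^-$; your argument needs the further claim that \emph{every} $\mathcal F\in\mathbf{F}^{\circ}(B)$ is of this form, i.e.\ that the condition $\alpha_{\mathcal F}(\cB_{\ext}^{\circ})=\cB_{\ext}^{\circ}$ forces the translation vector of $\mathcal F$ on the apartment to lie in $\nu_M(M)$ (integrality) and $B\subset P_{\mathcal F}$ forces dominance. You flag this as "bookkeeping", but it is an unproved classification statement that the paper neither needs nor establishes; without it your reduction does not cover all pairs $\mathcal F,\mathcal G$. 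Also, the aside that $\alpha_{\mathcal F}\circ\beta_{\mathcal F}$ is multiplication by the fiber size cannot be used to "recover" $\alpha_{\mathcal F}$ from $\beta_{\mathcal F}$: $\beta_{\mathcal F}$ is not invertible on $\Z[\cB_{\ext}^{\circ}]$, so commutativity of the $\beta$'s does not transfer to the $\alpha$'s this way.

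By contrast, the paper's proof is short and avoids the $\mathbb U$-dictionary entirely: since $\mathcal F,\mathcal G\in\mathbf{F}^{\circ}(B)$ lie in a common chamber (both parabolics contain $B$), the axiom AC of the affine $\mathbf{F}$-space structure \cite[\S 5.2.5]{Cornut17} gives $(a+\mathcal F)+\mathcal G=a+(\mathcal F+\mathcal G)=(a+\mathcal G)+\mathcal F$ for every $a\in\cB_{\ext}^{\circ}$, which is the $\alpha$-commutation, and the $\beta$-commutation follows formally by summing over fibers, using $\alpha_{\mathcal G}\circ\alpha_{\mathcal F}=\alpha_{\mathcal F+\mathcal G}=\alpha_{\mathcal G+\mathcal F}$. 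Your fallback argument for the $\alpha$'s (both filtrations act by translations on the standard apartment, extend by $B$-equivariance via $\cB_{\ext}^{\circ}=B\cdot(a_\circ,0_{V_G})$) is essentially this AC argument in disguise and is fine --- note it needs no integrality of $\mathcal F$ --- but the $\beta$-half of your proposal should either be replaced by the fiber-summation argument above or be completed by actually proving the identification and the classification of $\mathbf{F}^{\circ}(B)$.
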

\proof The first commutativity rule results from the condition AC \cite[\S 5.2.5]{Cornut17}, this condition ensures that for any $a \in \cB_{\ext}^\circ$
$$\alpha_{\mathcal{G}}\circ \alpha_{\mathcal{F}}(a)= (a+\cF)+ \mathcal{G}= a+(\cF+ \mathcal{G})=(a+\mathcal{G})+ \cF=\alpha_{\mathcal{F}}\circ \alpha_{\mathcal{G}}(a).$$
The same condition {\em loc. cit} also yields\begin{align*}\beta_{\mathcal{F}}\circ \beta_{\mathcal{G}}(b)&=\beta_{\mathcal{F}} \left(\sum_{\alpha_{\mathcal{G}}(a)=b}a\right)\\
&=\sum_{\alpha_{\mathcal{G}}(a)=b}\sum_{\alpha_{\mathcal{F}}(c)=a}c\\
&=\sum_{\alpha_{\mathcal{G}+ \cF}(c)=b} c\\
&=\sum_{\alpha_{ \cF+\mathcal{G}}(c)=b} c=\beta_{\mathcal{G}}\circ \beta_{\mathcal{F}}(b).\qed 
\end{align*}
\subsection{Formula}
\begin{lemma}\label{formula}
Let $\mathcal{F} \in \mathbf{F}^\circ$, $b \in \mathcal{B}_{\ext}^\circ$ and fix $ a \in \mathcal{B}_{\ext}^\circ$ such that $\alpha_{\mathcal{F}}(a)=b$, then
$$\beta_{\mathcal{F}}(b)=\sum_{h} h a, \quad h\in U_{\mathcal{F}} \cap \text{\em Stab}(b)/ U_{\mathcal{F}}\cap \text{\em Stab}(a).$$
\end{lemma}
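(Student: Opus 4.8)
The plan is to show that the fibre $\alpha_{\mathcal{F}}^{-1}(b)\subseteq\mathcal{B}_{\ext}^{\circ}$ is precisely the orbit of the chosen $a$ under $U_{\mathcal{F}}\cap\Stab(b)$, where $U_{\mathcal{F}}$ denotes the unipotent radical of $P_{\mathcal{F}}$; the asserted formula for $\beta_{\mathcal{F}}(b)=\sum_{\alpha_{\mathcal{F}}(a')=b}a'$ then drops out by definition. The only ingredient used throughout is the $P_{\mathcal{F}}$-equivariance of $\alpha_{\mathcal{F}}$ (and hence of $\beta_{\mathcal{F}}$), which is exactly why both operators land in $\End_{P_{\mathcal{F}}}(\Z[\mathcal{B}_{\ext}^{\circ}])$: for $g\in P_{\mathcal{F}}$ one has $g\cdot\mathcal{F}=\mathcal{F}$, so $\alpha_{\mathcal{F}}(g a)=g\cdot(a+\mathcal{F})=g\cdot\alpha_{\mathcal{F}}(a)$ by the $G$-equivariance of the action of $\mathbf{F}$ on $\mathcal{B}_{\ext}$.

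First I would record the formal half. For $h\in U_{\mathcal{F}}\subseteq P_{\mathcal{F}}$, equivariance gives $\alpha_{\mathcal{F}}(h a)=h\cdot\alpha_{\mathcal{F}}(a)=h b$, so $h a\in\alpha_{\mathcal{F}}^{-1}(b)$ if and only if $h\in\Stab(b)$; in particular any $h\in U_{\mathcal{F}}$ fixing $a$ also fixes $b=\alpha_{\mathcal{F}}(a)$, whence $U_{\mathcal{F}}\cap\Stab(a)\subseteq U_{\mathcal{F}}\cap\Stab(b)$. Thus $h\mapsto h a$ induces an injection
$$\bigl(U_{\mathcal{F}}\cap\Stab(b)\bigr)\big/\bigl(U_{\mathcal{F}}\cap\Stab(a)\bigr)\hookrightarrow\alpha_{\mathcal{F}}^{-1}(b),$$
whose source is finite because $\Stab(b)$ is compact open while $U_{\mathcal{F}}\cap\Stab(a)$ is open in $U_{\mathcal{F}}\cap\Stab(b)$. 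It remains to see that this map is onto, i.e. that any $a'\in\mathcal{B}_{\ext}^{\circ}$ with $\alpha_{\mathcal{F}}(a')=b$ lies in $U_{\mathcal{F}}\cdot a$; once this is known, $a'=h a$ with $h\in U_{\mathcal{F}}$ forces $h b=h\alpha_{\mathcal{F}}(a)=\alpha_{\mathcal{F}}(h a)=\alpha_{\mathcal{F}}(a')=b$, so automatically $h\in\Stab(b)$.

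The substance is therefore the statement that the fibres of $\alpha_{\mathcal{F}}$ are single $U_{\mathcal{F}}$-orbits. I would deduce this from the structure of the affine $\mathbf{F}$-space $\mathcal{B}_{\ext}$ developed in \cite[\S 5.2 \& \S 6.2]{Cornut17}: on any apartment $\mathcal{A}_{\ext}$ to which $\mathcal{F}$ is adapted, $\alpha_{\mathcal{F}}$ restricts to the translation by the vector part $\nu_{\mathcal{F}}$ of $\mathcal{F}$, which is a bijection of $\mathcal{A}_{\ext}$, and since the stabiliser in $P_{\mathcal{F}}$ of an $\mathcal{F}$-adapted apartment is contained in the Levi factor $L_{\mathcal{F}}$, the group $U_{\mathcal{F}}$ acts transitively on $\mathcal{F}$-adapted apartments. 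Given $a,a'$ in the same fibre, one chooses $\mathcal{F}$-adapted apartments $\mathcal{A}_{\ext}\ni a$ and $\mathcal{A}_{\ext}'\ni a'$ (so that $b\in\mathcal{A}_{\ext}\cap\mathcal{A}_{\ext}'$, the point $b$ being the image of $a$, resp.\ $a'$, under the translation by $\nu_{\mathcal{F}}$ inside the respective apartment), and then produces $h\in U_{\mathcal{F}}\cap\Stab(b)$ carrying $\mathcal{A}_{\ext}$ to $\mathcal{A}_{\ext}'$ — combining transitivity of $U_{\mathcal{F}}$ on $\mathcal{F}$-adapted apartments with the Iwahori--type factorisation $\Stab(b)\cap P_{\mathcal{F}}=(\Stab(b)\cap U_{\mathcal{F}})(\Stab(b)\cap L_{\mathcal{F}})$ of parahoric subgroups (cf.\ Proposition \ref{Iwasawadec} and Theorem \ref{decompositionwithLevi}) — after which $h a$ and $a'$ both lie in $\mathcal{A}_{\ext}'$ and map to $h b=b=\alpha_{\mathcal{F}}(a')$, so injectivity of $\alpha_{\mathcal{F}}$ on $\mathcal{A}_{\ext}'$ yields $h a=a'$. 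The step I expect to be the genuine obstacle is exactly this last one — arranging $h$ to lie in $\Stab(b)$, equivalently proving that $\alpha_{\mathcal{F}}^{-1}(b)$ is a full $U_{\mathcal{F}}$-orbit rather than merely a subset of one — and it is here that the fine geometry of $\alpha_{\mathcal{F}}$ (its boundedness, and the way $b$ sits deep enough towards the direction of $\mathcal{F}$ relative to $a$) must be used; everything else reduces to $G$-equivariance and the apartment-level description of $\alpha_{\mathcal{F}}$.
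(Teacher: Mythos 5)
There is a genuine gap, and you point at it yourself. Your reduction is fine and is essentially the right frame: by $G$-equivariance $\alpha_{\mathcal{F}}(ha)=h\,\alpha_{\mathcal{F}}(a)$ for $h\in U_{\mathcal{F}}\subset P_{\mathcal{F}}$, so the map $h\mapsto ha$ injects $\bigl(U_{\mathcal{F}}\cap \Stab(b)\bigr)/\bigl(U_{\mathcal{F}}\cap \Stab(a)\bigr)$ into $\alpha_{\mathcal{F}}^{-1}(b)$, and once one knows $a'=ha$ with $h\in U_{\mathcal{F}}$ the condition $h\in\Stab(b)$ is automatic. But the entire content of the lemma is the surjectivity, i.e.\ that every $a'$ with $a'+\mathcal{F}=b$ lies in $U_{\mathcal{F}}\cdot a$, and this you do not prove: you sketch a route and then explicitly defer it as "the genuine obstacle". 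Moreover the sketch rests on the claim that $U_{\mathcal{F}}$ acts transitively on $\mathcal{F}$-adapted apartments, which is false once $\mathcal{F}$ is not regular: apartments containing the facet at infinity of $\mathcal{F}$ correspond to apartments of the building of the Levi $L_{\mathcal{F}}$, and only $P_{\mathcal{F}}=U_{\mathcal{F}}L_{\mathcal{F}}$ acts transitively on them (already for $\GL_3$ and $\mathcal{F}$ on a wall, an element of $U_{\mathcal{F}}$ acts trivially modulo the flag it fixes, so it cannot move one adapted frame to another with a different image in the Levi). The proposed repair via the factorisation $\Stab(b)\cap P_{\mathcal{F}}=(\Stab(b)\cap U_{\mathcal{F}})(\Stab(b)\cap L_{\mathcal{F}})$ does not obviously help, because the Levi component need not fix $a$, and even an $h\in U_{\mathcal{F}}\cap\Stab(b)$ carrying one adapted apartment to the other is not yet known to carry $a$ to $a'$ without further control.

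The paper's proof supplies exactly the missing geometric mechanism, and it does not go through transitivity on apartments at all. One notes that the rays $t\mapsto a+t\mathcal{F}$ and $t\mapsto a'+t\mathcal{F}$ coincide for $t\ge 1$ (both equal $b+(t-1)\mathcal{F}$), places each ray in an apartment (geodesic rays are standard since $F$ is complete), and extends the two rays backwards through $b$ to geodesic lines $b+t\mathcal{G}$ and $b+t\mathcal{G}'$ with $\mathcal{G},\mathcal{G}'$ opposite to $\mathcal{F}$ \emph{at} $b$. The unipotent radical $U_{\mathcal{F}}$ acts transitively on filtrations opposite to $\mathcal{F}$ — this is the correct transitivity statement to invoke — so one gets $u\in U_{\mathcal{F}}$ with $u\mathcal{G}=\mathcal{G}'$; such a $u$ fixes $a+t\mathcal{F}=a'+t\mathcal{F}$ for $t\gg 0$, hence maps the line $L$ through $a$ to the line $L'$ through $a'$ while fixing their intersection, which contains the whole ray $b+t\mathcal{F}$, $t\ge 0$; in particular $ub=b$, and then $ua=u(b+\mathcal{G})=b+\mathcal{G}'=a'$. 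The point you flagged as needing "fine geometry" — that $u$ fixes $b$ — is precisely what the opposition \emph{at} $b$ (the $\pi$-angle condition) buys; without it your $h$ need not stabilise $b$ nor send $a$ to $a'$. To complete your write-up you would need to replace the apartment-transitivity step by this opposite-filtration argument (or an equivalent one).
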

\begin{proof}
For $x\in \cB_{\ext}$ and $\mathcal{F} \in \mathbf{F}^\circ$, we may consider the geodesic ray \cite[\S 5.2.11]{Cornut17}
$$\R_+ \longrightarrow \cB_{\ext}, \quad t \longmapsto x + t \mathcal{F}.$$
Since we are working over a complete field, any geodesic ray is standard meaning it is contained in some apartment \cite[6.2.8]{Cornut17}. Thus, for any $a'\in \alpha_{\mathcal{F}}^{-1}(b)$, there are apartments $\cA$ and $\cA'$ containing respectively $\{a+t \mathcal{F}, t\ge 0\}$ and $\{a' + t \mathcal{F}, t\ge 0\}$.

Suppose that $a+\mathcal{F} = a'+\mathcal{F} = b\in \cA \cap \cA'$ and extend the half lines $a+t\mathcal{F}$ and $a'+t\mathcal{F}$ to geodesic lines $L$ and $L'$, given by $b+t\mathcal G$ and $b+t\mathcal G'$, for $\mathcal G$ and $\mathcal G'$ opposed to $\mathcal{F}$. There exists a $u$ in $U_{\mathcal{F}}$ (the unipotent radical of $P_{\mathcal{F}}$) mapping $\mathcal G$ to $\mathcal G'$. It also fixes $a+t\mathcal F = b+(t-1)\mathcal F = a'+t\mathcal F$ for $t\gg 0$. It follows that $u$ maps $L$ to $L'$ and fixes their intersection\footnote{One may be surprised by the fact that any $u$ mapping $\mathcal G$ to $\mathcal G'$ fixes $b$. This is because $\mathcal G$ and $\mathcal G'$ are not merely "opposed to $\mathcal{F}$" in the building at infinity, they are actually and by construction, "opposed to $\mathcal{F}$ at $b$", i.e. $b+t\mathcal G$ and $b+t\mathcal{F}$ ($t\ge  0$) make a $\pi$-angle at $b$.}, which contains $b + t\mathcal{F}$ for all $t\ge 0$. It then maps $a=b+\mathcal G$ to $a'=b+\mathcal G'$. This proves the lemma.\end{proof}
\subsection{Comparison with  \texorpdfstring{$\mathcal{U}$}{U}}
The extended standard apartment $\A(\G,\Sbf)_{\ext}$ attached to $\Sbf$ is an affine space over the real vector space $X_*(\Sbf)\otimes_\Z \R$ and $m\in M$ acts on it by translation $\nu_M(m)$ (See \cite[\S \ref{emptyaprtment}]{UoperatorsI2021}. Likewise, the vector space $X_*(\Sbf)\otimes_\Z \R$ is canonically isomorphic to the apartment $\mathbf{F}(S)$ of $\mathbf{F}$ corresponding to $\Sbf$ {\cite[Definition 9 and \S 4.1.13 ]{Cornut17}}. 
Using this identification, for each $m\in M^-$ set $\mathcal{F}_m\in \mathbf{F}(S)\subset \mathbf{F}$ for the filtration corresponding
\footnote{{We want $m\cdot  a_1 + \mathcal F_m = a_{1}$, i.e. $a_{1} + \nu(m)  + \mathcal F_m = a_{1}=(a_\circ,0_{V_G})$ in the apartment $\cA_{\ext}=\cA_{\ext}(\Sbf)$, so we want $\nu(m) + \mathcal F_m = 0$, i.e. $\mathcal F_m$ is opposed to $\nu(m)$. Alternatively: $\mathcal F_m$ corresponds to $m^{-1}$, which is in $M^+$.}} to $-\nu_M(m) \in X_*(\Sbf)\otimes_\Z \R$. 
Then
$$\alpha_{{\mathcal{F}_m}}, \beta_{{\mathcal{F}_m}} \in \End_{\Z[B]} \Z[\mathcal{B}_{\ext}^\circ].$$
\begin{theorem}\label{Uopfiltration}Let $m \in M^-$, then we have
$$\beta_{{\mathcal{F}_m}}=\mathcal{U}_m\text{ in }\End_{\Z[B]}\Z[\mathcal{B}_{\ext}^\circ].$$
\end{theorem}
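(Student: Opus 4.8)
The plan is to compare the two $B$-equivariant operators $\beta_{\mathcal F_m}$ and $\mathcal U_m$ on the $B$-orbit $\mathcal B_{\ext}^\circ = B\cdot(a_\circ,0_{V_G})$. Since both sides lie in $\End_{\Z[B]}\Z[\mathcal B_{\ext}^\circ]$ (for the left-hand side by the discussion following Lemma \ref{formula}, for the right-hand side by Corollary \ref{Bequivariance}), and $\mathcal B_{\ext}^\circ$ is a single $B$-orbit through the base point $a_1 := (a_\circ,0_{V_G})$, it suffices to check the identity on the single vertex $a_1$; that is, I would reduce everything to proving
$$\beta_{\mathcal F_m}(a_1) = \mathcal U_m\, a_1 \quad\text{in } \Z[\mathcal B_{\ext}^\circ].$$

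First I would unwind the right-hand side using Lemma \ref{equivariant}: by definition $\mathcal U_m a_1 = \sum_{i\in I^+/mI^+m^{-1}} i\cdot a_m$, where $a_m = m\cdot a_1$. Next I would unwind the left-hand side using the formula of Lemma \ref{formula}. The filtration $\mathcal F_m \in \mathbf F(S)$ is, by construction (see the footnote), opposed to $\nu_M(m)$, so that $a_m + \mathcal F_m = a_1$ in the apartment $\cA_{\ext}$; hence $a_m$ is a valid choice of point in the fiber $\alpha_{\mathcal F_m}^{-1}(a_1)$. Applying Lemma \ref{formula} with $b = a_1$ and $a = a_m$, we get
$$\beta_{\mathcal F_m}(a_1) = \sum_{h} h\cdot a_m, \qquad h\in U_{\mathcal F_m}\cap \Stab(a_1)\,/\,U_{\mathcal F_m}\cap \Stab(a_m).$$
The two descriptions will match once I identify the index sets. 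The key computations are: (1) the unipotent radical $U_{\mathcal F_m}$ of the parabolic $P_{\mathcal F_m}$ is exactly $U^+ = \U(F)^+$ — this holds because $\mathcal F_m$ lies in the standard apartment $\mathbf F(S)$ and corresponds to the antidominant cocharacter $-\nu_M(m)$ with $m\in M^-$, so $P_{\mathcal F_m}\supseteq B$ is the standard parabolic attached to the facet of $-\nu_M(m)$, whose unipotent radical is $\U^+$ (or more precisely, the relevant part of $\U^+$; I should be careful about whether $m$ is regular antidominant or lies on a wall, and handle the general case by noting $U_{\mathcal F_m}\cap\Stab(a_1) = I^+$ regardless, as in Lemma \ref{BKI}); (2) $U^+\cap \Stab(a_1) = U^+\cap K^1 = I^+$ by Lemma \ref{BKI} (the "geometric" version, $I^1{}^+ = U_{a_\circ}^+$); and (3) $U^+\cap \Stab(a_m) = U^+\cap m K^1 m^{-1} = m I^+ m^{-1}$, which is exactly the content of \cite[Lemma \ref{normalizingI}]{UoperatorsI2021} combined with the fact that $m\in M^-$ (this is the same step that appears in the proof of Lemma \ref{equivariant}, line (2)). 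Putting these together, the index set for $\beta_{\mathcal F_m}(a_1)$ is $I^+/mI^+m^{-1}$, and $\sum_h h\cdot a_m = \sum_{i\in I^+/mI^+m^{-1}} i\cdot a_m = \mathcal U_m a_1$, as desired.

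The main obstacle I anticipate is step (1): correctly matching the building-theoretic data of \cite{Cornut17} (the filtration $\mathcal F_m$, its stabilizer $P_{\mathcal F_m}$, and its unipotent radical $U_{\mathcal F_m}$, together with the geodesic-ray picture underlying Lemma \ref{formula}) with the Bruhat--Tits data used in Section \ref{ALring} and \cite{UoperatorsI2021} (the parabolic $P_J$, the groups $\U^\pm$, the parahoric $K^1$). In particular one must check that "opposed to $\mathcal F_m$ at $b = a_1$" in the sense of the footnote to Lemma \ref{formula} genuinely picks out $U^+$ acting as claimed, and that the stabilizers $\Stab(a_1)$, $\Stab(a_m)$ in $G$ intersected with $U^+$ are the parahoric-level groups $I^+$, $mI^+m^{-1}$ — this is where one invokes that $I^1$ (resp.\ $K^1$) is precisely the $G_1$-fixator of $\a$ (resp.\ $a_\circ$) and that $G_1$ is strongly transitive and type-preserving (Proposition \ref{G1strong}), so that pointwise fixators behave as expected. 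Once the dictionary between the two formalisms is pinned down, the identity is immediate from the matching of the two explicit sums.
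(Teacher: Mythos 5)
Your overall route is the same as the paper's: reduce by $B$-equivariance to the base point $a_1=(a_\circ,0_{V_G})$ (legitimate, since $\cB_{\ext}^\circ=B\cdot a_1$ and both operators lie in $\End_{\Z[B]}\Z[\cB_{\ext}^\circ]$), apply Lemma \ref{formula} with $b=a_1$ and $a=a_m$, and then match the resulting index set with $I^+/mI^+m^{-1}$. Your steps (2) and (3) are fine in the regular case. The genuine gap is your treatment of the case where $-\nu_M(m)$ lies on a wall. There $P_{\mathcal{F}_m}$ is the standard parabolic $\mathbf{P}_J$ with $J=\{\alpha\in\Delta\colon \langle\alpha,\nu_M(m)\rangle=0\}\neq\emptyset$, and its unipotent radical is $U_{\mathcal{F}_m}=U_J^+=\U_{\Phi^+\setminus\Phi_J}(F)$, which contains no part of the root groups $\U_\alpha$ for $\alpha\in\Phi_J^+$. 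Hence $U_{\mathcal{F}_m}\cap\Stab(a_1)=U_J^+\cap K^1$ is a \emph{proper} subgroup of $I^+$ (it misses the subgroups $U_{\alpha+0}$ for $\alpha\in\Phi_J^+$), so your claim that ``$U_{\mathcal{F}_m}\cap\Stab(a_1)=I^+$ regardless'' is false, and Lemma \ref{BKI} does not give it: that lemma only says that the intersection of a fixed $U_\Psi$ with any parahoric between $I$ and $K$ is independent of the parahoric, not that $U_J^+\cap K$ fills up all of $I^+$.

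What is actually needed at this point --- and what the paper's proof invokes --- is Lemma \ref{bijection42}: for $m\in M_J^-$ (which holds for $m\in M^-$ with $J$ the wall set above), the inclusion $U_J^+\cap K\hookrightarrow I^+$ induces a bijection $U_{J}^+\cap K^1/\,U_{J}^+\cap mK^1m^{-1}\;\simeq\;I^+/mI^+m^{-1}$. This is precisely what converts the index set produced by Lemma \ref{formula}, namely $U_{\mathcal{F}_m}\cap\Stab(a_1)/\,U_{\mathcal{F}_m}\cap\Stab(a_m)$, into the index set $I^+/mI^+m^{-1}$ defining $\mathcal{U}_m a_1$, with each point of the fiber counted exactly once; your items (2)--(3) should accordingly be stated for $U_J^+$ rather than $U^+$, using \cite[Lemma \ref{normalizingI}]{UoperatorsI2021} to get $U_J^+\cap mK^1m^{-1}=m(U_J^+\cap K^1)m^{-1}$. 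With that substitution your argument closes and coincides with the paper's proof; as written, the wall case does not go through.
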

\proof
Let ${K^1}$ be the stabilizer of the distinguished base point $(a_\circ,0_{V_G})\in \mathcal{B}_{\ext}^\circ$. This is the open compact subgroup
$${K^1}=P_{a_\circ}\cap \ker \nu_{G}=P_{a_\circ}\cap G^1.$$
It is also the group of elements $g\in P_{a_\circ}$ such that $ \kappa_G(g)$ is torsion, hence $K$ has finite index in ${K^1}$.

By the formula of Lemma \ref{formula} and Lemma \ref{bijection42}, we have
$$\beta_{{\mathcal{F}_m}}(a_1)=\sum_{h\in \in U_{{\mathcal{F}_m}} \cap {K^1}/ U_{{\mathcal{F}_m}}\cap m{K^1}m^{-1}} h m a_1=\sum_{h\in I^+/ mI^+m^{-1}} h m a_\circ.$$
Therefore, this shows by $B$-equivariance that $\beta_{{\mathcal{F}_m}}=\mathcal{U}_m$. \qed 

\bibliographystyle{amsalpha}
\bibliography{Reda_library2021}

\end{document}